\documentclass[11pt,leqno]{amsart}

\usepackage{amsmath, amsfonts, amsthm, amssymb, graphicx, amscd,color,colortbl,}
\usepackage{float,epsf,subfig}
\usepackage[english]{babel}

\textwidth=15.0cm \textheight=21.0cm \hoffset=-1.1cm \voffset=-0.5cm

\makeatletter
\renewcommand\maketitle{\par
  \begingroup
    \renewcommand\thefootnote{\@fnsymbol\c@footnote}%
    \def\@makefnmark{\rlap{\@textsuperscript{\normalfont\@thefnmark}}}%
    \long\def\@makefntext##1{\parindent 1em\noindent
            \hb@xt@1.8em{%
                \hss\@textsuperscript{\normalfont\@thefnmark}}##1}%
    \if@twocolumn
      \ifnum \col@number=\@ne
        \@maketitle
      \else
        \twocolumn[\@maketitle]%
      \fi
    \else
      \newpage
      \global\@topnum\z@   
      \@maketitle
    \fi
    \thispagestyle{plain}\@thanks
  \endgroup
  \setcounter{footnote}{0}%
  \global\let\thanks\relax
  \global\let\maketitle\relax
  \global\let\@maketitle\relax
  \global\let\@thanks\@empty
  \global\let\@author\@empty
  \global\let\@date\@empty
  \global\let\@title\@empty
  \global\let\title\relax
  \global\let\author\relax
  \global\let\date\relax
  \global\let\and\relax
}
\renewcommand*\@maketitle{%
  \newpage
  \null
  \vskip 2em%
  \begin{center}%
  \let \footnote \thanks
    {\LARGE \@title \par}%
    \vskip 1.5em%
    {\large
      \lineskip .5em%
      \begin{tabular}[t]{c}%
        \@author
      \end{tabular}\par}%
    \vskip 1em%
    {\large \@date}%
  \end{center}%
  \par
  \vskip 1.5em}
\renewcommand{\author}[1]{\gdef\@author{#1}}
\makeatother

\numberwithin{equation}{section} \theoremstyle{plain}
\newtheorem{theorem}{Theorem}[section]
\newtheorem{lemma}[theorem]{Lemma}
\newtheorem{proposition}[theorem]{Proposition}

\theoremstyle{definition}
\newtheorem{definition}[theorem]{Definition}

\theoremstyle{remark}
\newtheorem{remark}[theorem]{Remark}

\newcommand{\e}{\varepsilon}

\renewcommand{\Xi}{P}

\newcommand{\nnu}{{\boldsymbol\nu}}

\def\bes{\begin{equation*}}
\def\ees{\end{equation*}}

\newcommand{\be}{\begin{equation}}
\newcommand{\ee}{\end{equation}}
\newcommand{\bee}{\begin{eqnarray}}
\newcommand{\eee}{\end{eqnarray}}

\setlength{\textheight}{20cm} \setlength{\textwidth}{14cm}

\begin{document}
\title[Shock Diffraction by Convex Cornered Wedges]{Shock Diffraction by Convex Cornered Wedges
for the Nonlinear Wave System}
\author{Gui-Qiang G. Chen \qquad Xuemei Deng \qquad Wei Xiang \\
Accepted for publication in [Arch. Rational Mech. Anal.]}

\address{Gui-Qiang G. Chen,  Mathematical Institute, University of Oxford,
         Oxford, OX2 6GG, UK; School of Mathematical Sciences, Fudan University,
 Shanghai 200433, China;  Department of Mathematics, Northwestern University,
         Evanston, IL 60208, USA}
 \email{\tt chengq@maths.ox.ac.uk}

\address{Xuemei Deng, Mathematical Institute, University of Oxford,
         Oxford, OX2 6GG, UK; School of Mathematical Sciences, Xiamen University,
         Xiamen, Fujian 361005, China}
\email{\tt dmeimeisx@yahoo.com.cn}

\address{Wei Xiang, School of Mathematical Sciences, Fudan University,
 Shanghai 200433, China; and Mathematical Institute, University of Oxford,
         Oxford, OX2 6GG, UK}
         \email{\tt 071018004@fudan.edu.cn; xiang@maths.ox.ac.uk}

\keywords{Shock diffraction, convex cornered wedges, nonlinear wave system, existence, regularity, mixed type,
elliptic-hyperbolic mixed, sonic boundary, free boundary, oblique derivative condition, optimal regularity.}

\subjclass[2010]{
Primary: 35M10, 25M12, 35B65, 35L65, 35L70, 35J70, 76H05, 35L67, 35R35;
Secondary: 35L15, 35L20, 25J67, 76N10, 76L05}
\date{\today}
\maketitle

\begin{abstract}
We are concerned with rigorous mathematical analysis of shock diffraction by two-dimensional
convex cornered wedges in compressible fluid flow, through the
nonlinear wave system. This shock diffraction problem can be
formulated as a boundary value problem for second-order nonlinear
partial differential equations of mixed elliptic-hyperbolic type in
an unbounded domain. It can be further reformulated as a free
boundary problem for nonlinear degenerate elliptic equations of
second order with a degenerate oblique derivative boundary
condition. We establish a global theory of existence and optimal
regularity for this shock diffraction problem. To achieve this, we
develop several mathematical ideas and techniques, which are also
useful for other related problems involving similar analytical
difficulties.
\end{abstract}

\section{Introduction}
We are concerned with rigorous mathematical analysis of shock
diffraction by two-dimensional cornered wedges whose angles are less
than $\pi$ in compressible fluid flow, through the nonlinear wave
system. The study of the shock diffraction problem dates back to the
1950's and starts from the work of Bargman \cite{Bargman}, Lighthill
\cite{Lighthill1,Lighthill2}, Fletcher-Weimer-Bleakney \cite{FWB},
and Fletcher-Taub-Bleakney \cite{FTB} via asymptotic or experimental
analysis. Also see Courant-Friedrichs \cite{CFr} and Whitham
\cite{Whitham}.

In this paper, we develop several mathematical ideas and techniques
through the nonlinear wave system to establish a rigorous theory of
existence and regularity of solutions to the diffraction problem.
The nonlinear wave system consists of three conservation laws, which
takes the form:
\begin{equation} \label{1.1a}
\begin{array}{rr}
\rho_{t}+m_{x_1}+n_{x_2}=0,\\
m_{t}+p_{x_1}=0,\\
n_{t}+p_{x_2}=0,
\end{array}
\end{equation}
for $(t,\mathbf{x})\in[0,\infty)\times \mathbb{R}^2,  \mathbf{x}\in
\mathbb{R}^2$, where $\rho$ stands for the density,  $p$ for the
pressure, $(m,n)$ for the momenta in the $(x_1,x_2)$--coordinates.
The pressure-density constitutive relation is
\begin{equation}\label{pressure}
p(\rho)=\rho^\gamma/\gamma, \qquad \gamma>1,
\end{equation}
by scaling without loss of generality. Then the sonic speed
$c=c(\rho)$ is determined by
$$
c^2(\rho):=p'(\rho)=\rho^{\gamma-1}.
$$
Notice that $c(\rho)$ is a positive, increasing function for all
$\rho>0$.

The two-dimensional nonlinear wave system \eqref{1.1a} is derived
from the compressible isentropic gas dynamics by neglecting the
inertial terms, {\it i.e.}, the quadratic terms in the velocity; see
Canic-Keyfitz-Kim \cite{sbe2}. Also see Zheng \cite{z} for a related
hyperbolic system, the pressure gradient system of conservation
laws; the same arguments developed in this paper can be carried
through to establish a corresponding theory of existence and
regularity for the pressure gradient system.

Let $S_0$ be the vertical planar shock in the $(t, {\bf
x})$--coordinates, $t\in \mathbb{R}_+:=[0, \infty),
\mathbf{x}=(x_1,x_2)\in\mathbb{R}^2$, with the left constant state
$U_{1}=(\rho_{1},m_{1},0)$ and the right state
$U_{0}=(\rho_{0},0,0)$, satisfying
$$
m_1=\sqrt{\big(p(\rho_{1})-p(\rho_{0})\big)(\rho_{1}-\rho_{0})}>0,
\qquad \rho_1>\rho_0.
$$
When $S_0$ passes through a convex cornered wedge:
$$
W:=\{(x_1, x_2)\, :\, x_2<0,  -\infty<x_1\le x_2\,{\rm
ctan}\,\theta_w\},
$$
shock diffraction occurs, where the wedge angle $\theta_w$ is
between $-\pi$ and $0$; see Fig. \ref{cog}. Then the shock
diffraction problem can be formulated as the following mathematical
problem:

\begin{figure}[h]
  \centering
  \includegraphics[width=0.45\textwidth]{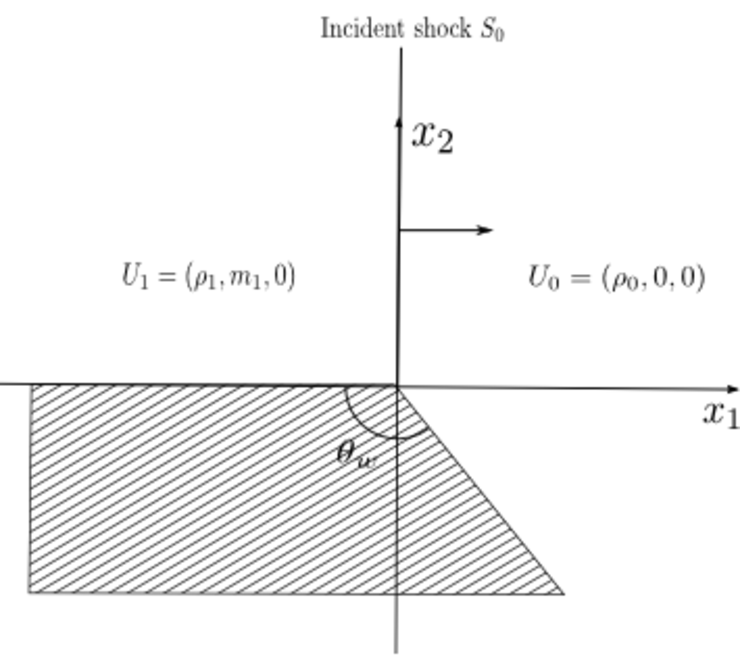} \qquad
  \includegraphics[width=0.45\textwidth]{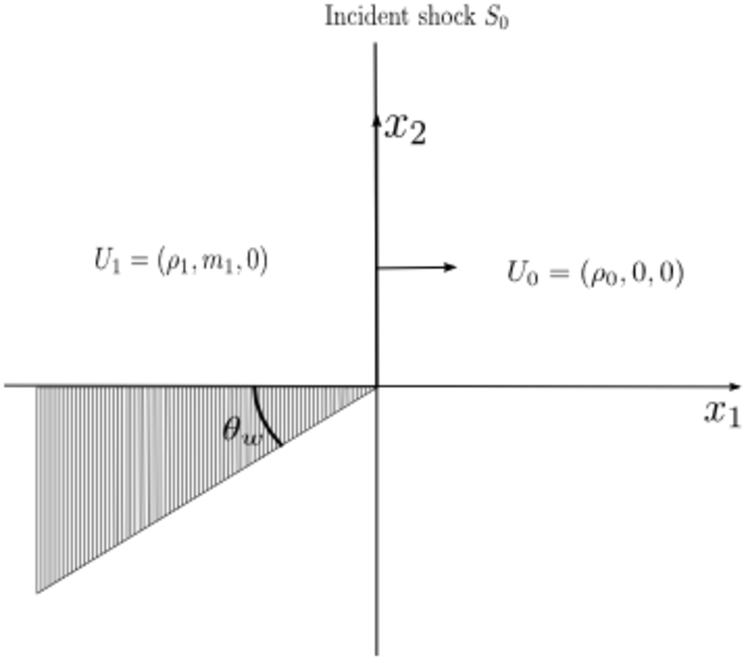}
  \caption{Initial-boundary value problem}
  \label{cog}
\end{figure}

\medskip
{\bf Problem 1 (Initial-Boundary Value Problem)}.  {\it Seek a
solution of system \eqref{1.1a} with the initial condition at $t=0$:
\begin{equation}\label{initial-condition}
(\rho, m, n)|_{t=0} =\begin{cases} (\rho_0, 0, 0) \quad &\mbox{\rm
in}\,\,\{-\pi+\theta_w\le \arctan\big(\frac{x_2}{x_1}\big)\le
\frac{\pi}{2}\},\\ (\rho_1, m_1, 0) \quad &\mbox{\rm in}\,\,
\{x_1<0, x_2>0\},
\end{cases}
\end{equation}
and the slip boundary condition along the wedge boundary $\partial
W$:
\begin{equation}\label{boundary-condition}
(m, n)\cdot \nnu|_{\partial W}=0,
\end{equation}
where $\nnu$ is the exterior unit normal to $\partial W$ (see Fig.
{\rm \ref{cog}}).}

\medskip
Notice that the initial-boundary value problem
\eqref{1.1a}--\eqref{boundary-condition}
 is invariant under the self-similar scaling:
\begin{equation}\label{2.2.2}
(t, {\bf x})\to (\alpha t, \alpha {\bf x}) \qquad
\quad\mbox{for}\quad \alpha\ne 0.
\end{equation}
Thus, we seek self-similar solutions with the form
\begin{equation}\label{2.2.3}
(\rho, m, n)(t, {\bf x})=(\rho, m, n)(\xi, \eta) \qquad\quad
\mbox{for}\quad  (\xi, \eta)=\frac{{\bf x}}{t}.
\end{equation}

In the self-similar coordinates $(\xi, \eta)$, system \eqref{1.1a}
can be rewritten as
\begin{equation} \label{1.5a}
\begin{array}{rr}
(m-\xi \rho)_\xi +(n-\eta\rho)_\eta+2 \rho=0,\\
\big(p(\rho)-\xi m\big)_\xi-(\eta m)_\eta+2m=0,\\
(\xi n)_\xi-\big(p(\rho)-\eta n\big)_\eta-2n=0.
\end{array}
\end{equation}
In the polar coordinates $(r,\theta), r=\sqrt{\xi^2+\eta^2}$, the
system can be further written as
\begin{equation}\label{1.5b}
\,\,\,
\partial_{r}\begin{pmatrix}
r\rho-\cos\theta\, m-\sin\theta\, n\\ rm-\cos\theta\, p(\rho)\\
rn-\sin\theta\, p(\rho)
 \end{pmatrix}+\partial_{\theta}
\begin{pmatrix}
\sin\theta\, m-\cos\theta\, n\\ \sin\theta\, p(\rho)\\
-\cos\theta\, p(\rho)
\end{pmatrix}
=\begin{pmatrix}
\rho+\frac{\cos\theta}{r}m+\frac{\sin\theta}{r}n \\  m+\frac{\cos\theta}{r}p(\rho)\\
n+\frac{\sin\theta}{r}p(\rho)
 \end{pmatrix}.
\end{equation}

The location of the incident shock $S_0$ for large $r\gg 1$ is:
\begin{equation}\label{ShockLocation:1}
\xi=\xi_1=\sqrt{\frac{p(\rho_1)-p(\rho_0)}{\rho_1-\rho_0}}>0.
\end{equation}

Then Problem 1 can be reformulated as a boundary value problem in an
unbounded domain:

\medskip
{\bf Problem 2 (Boundary Value Problem)}. {\it Seek a solution of
system \eqref{1.5a}, or equivalently \eqref{1.5b}, with the
asymptotic boundary condition when $r\to \infty$:
\begin{equation}\label{initial-condition-2}
(\rho, m, n)\to \begin{cases} (\rho_0, 0, 0)\quad\,
&\mbox{\rm in}\,\,\{\xi>\xi_1, \eta>0\}\cup\{-\pi+\theta_w\le\arctan\big(\frac{\eta}{\xi}\big)\le 0\},\\
(\rho_1, m_1, 0) \quad\, &\mbox{\rm in}\,\, \{\xi<\xi_1,\eta>0\},
\end{cases}
\end{equation}
and the slip boundary condition along the wedge boundary $\partial
W$:
\begin{equation}\label{boundary-condition-ss}
(m, n)\cdot \nnu|_{\partial W}=0,
\end{equation}
where $\nnu$ is the exterior unit normal to $\partial W$ (see Fig.
{\rm \ref{cog-2}}).}
\begin{figure}[h]
  \centering
  \includegraphics[width=0.47\textwidth]{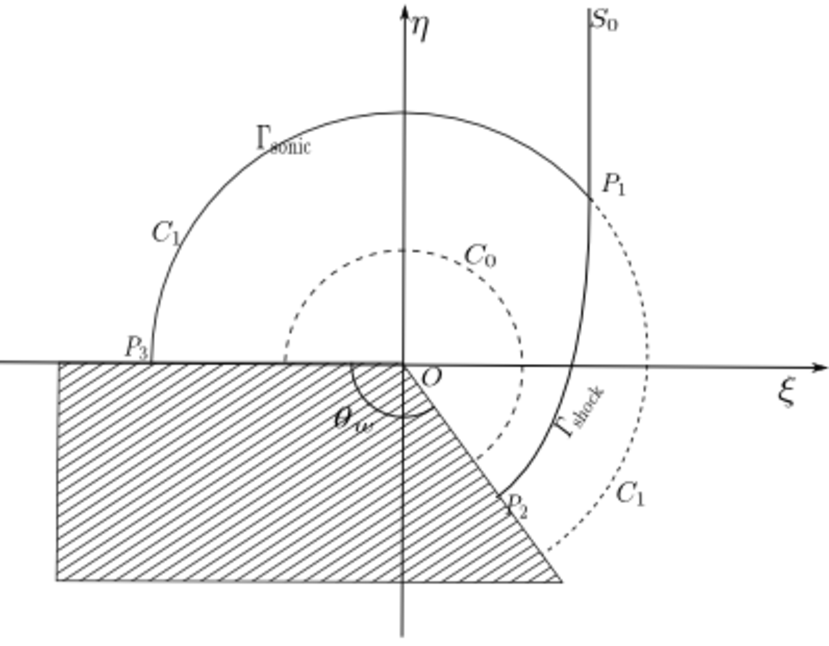}\qquad
  \includegraphics[width=0.47\textwidth]{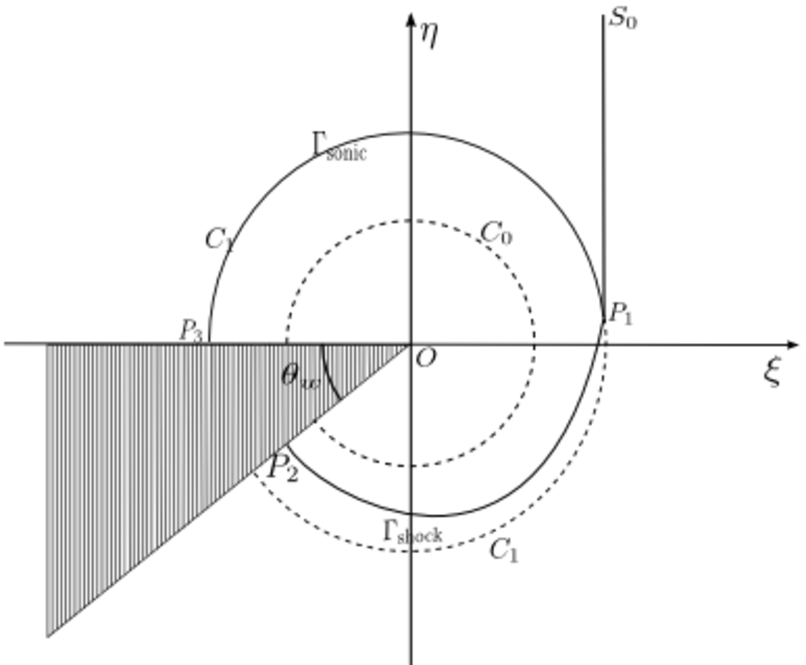}
  \caption{Shock diffraction configuration}
  \label{cog-2}
\end{figure}

\bigskip
For a smooth solution $U=(\rho,m,n)$ to \eqref{1.5a}, we may
eliminate $m$ and $n$ in \eqref{1.1a} to obtain a second-order
nonlinear equation for $\rho$: \begin{equation} \label{1.6a}
\big((c^{2}-\xi^{2})\rho_{\xi}-\xi\eta\rho_{\eta}+\xi\rho\big)_{\xi}
+\big((c^{2}-\eta^{2})\rho_{\eta}-\xi\eta\rho_{\xi}+\eta\rho
\big)_{\eta} -2\rho=0. \end{equation}

Correspondingly, equation \eqref{1.6a} in the polar coordinates $(r,
\theta), r=\sqrt{\xi^2+\eta^2}$, takes the form
\begin{equation}\label{1.8b}
\big((c^{2}-r^{2})\rho_{r}\big)_{r}+\frac{c^{2}}{r}\rho_{r}
+(\frac{c^{2}}{r^{2}}\rho_{\theta})_{\theta}=0.\end{equation}

In the self-similar coordinates, as the incident shock $S_0$ passes
through the wedge corner, $S_0$ interacts with the sonic circle
$\Gamma_{sonic}$ of state (1): $r=r_1$, and becomes a transonic
diffracted shock $\Gamma_{shock}$, and the flow in the domain
$\Omega$ behind the shock and inside $\Gamma_{sonic}$ becomes
subsonic. In Section 2, we reduce {\it Problem 2} for shock
diffraction into a one-phase free boundary problem, {\it Problem 3},
for second-order elliptic equation in the domain $\Omega$ with the
free boundary $\Gamma_{shock}$, degenerate boundary
$\Gamma_{sonic}$, and slip boundary $\partial W\cap \Omega$. In this
paper, we focus on the existence of global solutions of shock
diffraction and the optimal regularity of the solution across the
sonic circle $\Gamma_{sonic}$.

There are two additional difficulties to establish the global
existence of solutions, besides the ellipticity degenerates at the
sonic circle $\Gamma_{\text{sonic}}$. The first is that the oblique
derivative boundary condition degenerates at $P_2$, that is,
$\beta_2$ may equal to $0$, for which a one-point Dirichlet boundary
condition has to be identified to ensure the uniqueness of
solutions. The second difficulty is that the diffracted shock may
coincide with the sonic circle $C_0:=\{r=c(\rho_0)\}$ of state (0)
in the iteration where the oblique derivative boundary condition
fails again. Then we can not employ directly the results in
Liebermann \cite{g1}--\cite{g4} to show the existence of solutions
for the fixed boundary value problem. One of our strategies here is
to enforce an additional condition $r(\theta)\geq c(\rho_0)+\delta$ on
the diffracted shock curve with $\delta$ small enough and modify
slightly the approximate shock curve to overcome the difficulty.

The approach used in this paper for establishing the global
existence of solutions is first to regularize the equation by adding
the regularized differential operator $\varepsilon\Delta$ ($\Delta$
denotes the Laplace operator in the self-similar coordinates) to
make the equation uniformly elliptic; and then to rely on the Perron
method, as in \cite{gl1}, to show the global existence of solutions
for the fixed boundary value problem; and finally to apply the
Schauder fixed point theorem to show the existence of global
solutions for the free boundary problem. Moreover, we obtain uniform
estimates for the global solutions with respect to $\delta,
\varepsilon>0$ so that we can pass the limits $\delta\rightarrow0$
and $\varepsilon\rightarrow0$ to establish the existence of
solutions of the free boundary problem for the original system. In
particular, we prove that the diffracted shock is uniformly
transonic, that is, the strength of the shock is positive even at
point $P_2$.

In order to establish the optimal regularity across the sonic
boundary $\Gamma_{sonic}$, we write equation \eqref{1.8b} in terms
of the function
$$
\psi:=c^2(\rho_1)-c^2(\rho)
$$
in the $(x,y)-$coordinates, which is specified in \S 5, defined near
$\Gamma_{\text{sonic}}$ such that $\Gamma_{\text{sonic}}$ becomes a
segment on $\{x=0\}$, with the form
\begin{equation}\label{1.10}
\,\,\, (2c_{1}x-\psi)\psi_{xx}+\psi_{yy}
+c_{1}\psi_{x}-\psi_{x}^{2}-\frac{1}{(\gamma-1) c^2_1}\psi^2_y=0\
\quad \text{in}\ x>0\ \text{and near}\ x=0, \end{equation} plus
``small" terms, since $\rho$ and $\psi$ have the same regularity in
$\Omega$. Then we employ the approach in Bae-Chen-Feldman \cite{bcm}
to analyze the features of equation \eqref{1.10} and prove the
$C^{1,\alpha}$-regularity of solutions of the shock diffraction
problem in the elliptic region up to part
$\overline{\Gamma_{\text{sonic}}}\backslash P_1$ of the sonic shock.
As a corollary, we establish that the $C^{0,1}-$regularity is
actually optimal across the sonic boundary $\Gamma_{\text{sonic}}$
from the elliptic region $\Omega$ to the hyperbolic region of state
(1), that is, the optimal regularity at the degenerate elliptic
boundary.

We remark that the existence problem for a shock interaction with
the right cornered wedge (90-degree) was studied by Kim \cite{e}, in
which some important features and behavior of solutions have been
exhibited. As far as we have known, for the shock diffraction by a
convex cornered wedges whose angles are between $-\pi$ and $0$ in
compressible fluid flow, no rigorous complete global mathematical
results have been available, since the early work by Bargman
\cite{Bargman}, Lighthill \cite{Lighthill1,Lighthill2},
Fletcher-Taub-Bleakney \cite{FTB}, and Fletcher-Weimer-Bleakney
\cite{FWB}. The results established in this paper is the first
rigorous complete mathematical results through the nonlinear wave
system for the global existence and optimal regularity of solutions
of shock diffraction by any convex cornered wedge.

A closely related problem, shock reflection-diffraction by a concave
cornered wedges, has been systematically analyzed in Chen-Feldman
\cite{cm0,cm1,cm2} and Bae-Chen-Feldman \cite{bcm}, where the
existence of regular shock reflection-diffraction configurations has
been established up to the sonic wedge-angle for potential flow.
Also see Canic-Keyfitz-Kim \cite{sbe1,sbe2} for the unsteady
transonic small disturbance equation and the nonlinear wave system, Zheng \cite{z} for the pressure-gradient system, and Serre \cite{Serre} for the Euler equations for a Chaplygin gas.

The organization of this paper is as follows. In $\S 2$, we
reformulate the shock diffraction problem into a free boundary
problem for the nonlinear second-order equation \eqref{1.1a} in both
the self-similar and polar coordinates, and present the statement of
our main theorem for the existence and optimal regularity of the
global solution. In $\S 3$, we first formulate the regularized
approximate free boundary problem by adding a regularized
differential operator with $\varepsilon\Delta\rho$ to the original
equation and the assumption $c(\bar{\rho})\geq c(\rho_0)+\delta$,
where $\bar{\rho}$ is the data given at point $P_2$. Then we
establish the existence of solutions to the regularized free
boundary problem for the uniformly elliptic equation in the polar
coordinates, and so does in the self-similar coordinates, as
approximate solutions to the original free boundary problem. In $\S
4$, we proceed to the limits $\varepsilon\rightarrow0$ and
$\delta\rightarrow0$ to establish the global existence of solutions
of the original problem in the self-similar coordinates. In $\S 5$,
we establish the optimal $C^{0,1}$-regularity  of the solution
$\rho$ across the degenerate sonic boundary. In $\S 6$, we establish
a corresponding theorem for the existence and regularity of
solutions of the shock diffraction problem for the nonlinear wave
system.

\section{Mathematical Formulation and Main Theorem}

In this section, we derive mathematical formulation of the shock
diffraction problem as a free boundary problem for a nonlinear
degenerate elliptic equation of second order and present our main
theorem of this paper. In particular, we employ the Rankine-Hugoniot
relations to set up a boundary condition along the free boundary
(shock) and derive other boundary conditions along the wedge
boundaries in the polar coordinates.

\subsection{Rankine-Hugoniot Conditions and Oblique Derivative Boundary Condition on the Diffracted Shock}

Consider system \eqref{1.5b} in the polar coordinates. Then the
Rankine-Hugoniot relations, {\it i.e.}, the jump conditions, are
\begin{eqnarray}
&&\frac{dr}{d\theta}=
r\frac{\sqrt{r^2-\bar{c}^2(\rho,\rho_0)}}{\bar{c}(\rho,\rho_0)}, \label{2.34a}\\[2mm]
&&[p][\rho]=[m]^2+[n]^2, \label{2.6}
\end{eqnarray}
with
$\bar{c}(\rho,\rho_0)=\sqrt{\frac{p(\rho)-p(\rho_0)}{\rho-\rho_0}}$,
where we have chosen the plus branch so that $\frac{dr}{d\theta}>0$.
Differentiating \eqref{2.6} along $\Gamma_{\text{shock}}$ and using
the equations obtained above with careful calculation, we finally
obtain \begin{equation} \sum_{i=1}^2
\beta_iD_i\rho:=\beta_1\rho_r+\beta_2\rho_{\theta}=0, \end{equation}
where $\beta=(\beta_1,\beta_2)$ is a function of $(\rho_0, \rho,
r(\theta), r'(\theta))$ with
\begin{equation}\label{2.9}
\beta_{1}=r'(\theta)\big(c^{2}(r^2-\bar{c}^2)-3\bar{c}^2(c^{2}-r^2)\big),\quad
\beta_{2}=3c^{2}(r^{2}-\bar{c}^2)-\bar{c}^2(c^{2}-r^{2}).
\end{equation}
Thus, the obliqueness becomes
\begin{equation}\label{mu-def}
0\ne \beta\cdot(1,-r'(\theta))=-2r^2(c^2-\bar{c}^{2})r'(\theta)
=:\mu,
\end{equation}
where $(1,-r'(\theta))$ is the outward normal to $\Omega$ on
$\Gamma_{\text{shock}}$. Note that $\mu$ becomes zero when
$r'(\theta)=0$, that is, $r=\bar{c}(\rho,\rho_0)$. When the
obliqueness fails, we have
$$
\beta_1=0, \qquad \beta_2=-\bar{c}^2(c^2-r^2)<0,
$$
since $c^2(\rho)>\bar{c}^2(\rho,\rho_0)=r^2$ if $\rho>\rho_0$.

\medskip
We define $Q$ to be the governing second-order quasilinear operator
in the subsonic domain $\Omega$: \begin{equation}\label{2.40a}
Q\rho:=\big((c^{2}-r^{2})\rho_{r}\big)_{r}+\frac{c^{2}}{r}\rho_{r}
+\big(\frac{c^{2}}{r^{2}}\rho_{\theta}\big)_{\theta}=0,
\end{equation} and $M$ to be the oblique derivative boundary operator:
\begin{equation}\label{2.41a}
M\rho:=\beta_{1}\rho_{r}+\beta_{2}\rho_{\theta}=0\qquad\,\,\text{on}
\quad\Gamma_{\text{shock}}:=\{(r(\theta),\theta)\,
:\,\theta_w\leq\theta\leq\theta_1\}. \end{equation} The second
condition on $\Gamma_{\text{shock}}$ is the shock evolution
equation: \begin{equation}\label{2.42a}
\frac{dr}{d\theta}=r\frac{\sqrt{r^2-\bar{c}^2(\rho,\rho_0)}}{\bar{c}(\rho,\rho_0)}
:=g(r,\theta,\rho(r,\theta)), \qquad\,\,\, r(\theta_1)=r_1,
\end{equation} where $(r_1,\theta_1)$ are the polar coordinates of
$P_1=(\xi_1,\eta_1)$.

At point $P_2$, $r'(\theta_w)=0$, $M$ does not satisfy the oblique
derivative boundary condition at this point. We may alternatively
express this as a one-point Dirichlet condition by solving
$r(\theta_w)=\bar{c}(\rho(r(\theta_w), \theta_w),\rho_0)$. In order
to deal with this equation, we introduce the  notation:
\begin{equation} a=(\bar{c}_{b})^{-1}(r)\qquad  \text{when}\,\,
\bar{c}_b:=\bar{c}(a,b)=r \,\,\,\mbox{for fixed $b$}. \end{equation}
 Thus, we have
\begin{equation} \label{2.47a}
\rho(P_2)=\bar{\rho}=(\bar{c}_{\rho_0})^{-1}(r(\theta_w)).
\end{equation}

\subsection{Boundary Condition on the Wedge}
The boundary condition on the wedge is the slip boundary condition,
{\it i.e.}, $(m,n)\cdot\nnu=0$. Differentiating it along the wedge,
and combining this with the second and third equations in
\eqref{1.1a}, we conclude that $\rho$ satisfies \begin{equation}
\label{2.45a} \rho_{\nnu}=0\qquad\text{on}\,\,
\Gamma_0:=\partial\Omega\cap(\{\theta=\pi\}\cup
\{\theta=\theta_w\}). \end{equation}

\subsection{Boundary Condition on $\Gamma_{sonic}$ of State (1)}
The Dirichelt boundary condition on $\Gamma_{sonic}$:
\begin{equation} \label{2.43a} \rho=\rho_1\qquad\text{on}\,\,
\Gamma_{\text{sonic}}:=\partial\Omega\cap\partial B_{c_1}(0).
\end{equation} On the Dirichlet boundary $\Gamma_{\text{sonic}}$, the
equation $Q\rho=0$ becomes degenerate elliptic from the inside of
$\Omega$.

\subsection{Reformulation of the Shock Diffraction Problem}

With the derivation of the free boundary condition on
$\Gamma_{shock}$ and the fixed boundary conditions on
$\Gamma_{sonic}$ and the wedge $\Gamma_0$, {\it Problem 2} is
reduced to the following free boundary problem in the domain
$\Omega$ for the second order equation \eqref{2.40a}, with $(m, n)$
correspondingly determined by \eqref{1.5b}.

\medskip
{\bf Problem 3 (Free Boundary Problem)}. {\it Seek a solution of the
second-order nonlinear equation \eqref{2.40a} for the density
function $\rho$ in the domain $\Omega$, satisfying the free boundary
conditions \eqref{2.41a}--\eqref{2.47a} on $\Gamma_{shock}$, the
Neumann boundary condition \eqref{2.45a} on the wedge $\Gamma_0$,
and the Dirichlet boundary condition \eqref{2.43a} on the degenerate
boundary $\Gamma_{sonic}$, the sonic circle of state {\rm (1)} ({\it
cf.} Fig. {\rm \ref{cog-2}}).}

\subsection{Main Theorem}
For the free boundary problem, {\it Problem 3}, we have the
following results, which form the main theorem of this paper.

\begin{theorem}[\text{\bf Main Theorem}]\label{1}\label{2.1}
Let the wedge angle $\theta_w$ be between $-\pi$ and $0$. Then there
exists a global solution $\rho(r,\theta)$ in the domain $\Omega$
with the free boundary $r=r(\theta), \theta\in [\theta_w,
\theta_1]$, of Problem {\rm 3}:
$$
\rho\in C^{2+\alpha}(\Omega)\cap C^{\alpha}(\overline{\Omega}),
\quad r\in C^{2+\alpha}([\theta_w,\theta_1))\cap
C^{1,1}([\theta_w,\theta_1]).
$$
Moreover, the solution $(\rho(r,\theta), r(\theta))$ satisfies the
following properties:
\begin{enumerate}
\item[\rm (i)] $\rho> \rho_0$ on the shock $\Gamma_{shock}$, that is, the shock $\Gamma_{shock}$
is separated from the sonic circle $C_0$ of state {\rm (0)};

\item[\rm (ii)] The shock $\Gamma_{shock}$ is strictly convex up to point $P_1$,
except point $P_2$, in the self-similar coordinates $(\xi, \eta)$;
\item[\rm (iii)] The solution is $C^{1,\alpha}$ up to $\Gamma_{sonic}$ and
Lipschitz continuous across $\Gamma_{\text{sonic}}$;
\item[\rm (iv)] The Lipschitz regularity of solutions
across $\Gamma_{sonic}$ and at $P_1$ from the inside is optimal.
\end{enumerate}
\end{theorem}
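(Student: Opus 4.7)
The plan is to combine a two-parameter elliptic regularization with a Schauder fixed point iteration for the free boundary, extract a limit solution as the parameters vanish, and then recover the sharp regularity at the sonic boundary via a local change of variables of Bae--Chen--Feldman type. Three obstructions must be handled simultaneously: (a) the equation \eqref{2.40a} degenerates on $\Gamma_{\text{sonic}}$; (b) the obliqueness constant $\mu=\beta\cdot(1,-r'(\theta))$ from \eqref{2.9} vanishes at the corner $P_2$ where $r'(\theta_w)=0$; and (c) during the iteration the approximate shock could touch the sonic circle $C_0$ of state $(0)$, forcing $\beta_2=0$. I would remove (a) by adding $\varepsilon\Delta\rho$ to \eqref{2.40a} to obtain a uniformly elliptic equation, and remove (c) by restricting to shock curves satisfying $r(\theta)\ge c(\rho_0)+\delta$, with a small perturbation of \eqref{2.42a} ensuring that this constraint is preserved under iteration. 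The corner obstruction (b) is encoded as the one-point Dirichlet condition \eqref{2.47a}.

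For each fixed pair $(\varepsilon,\delta)$ I would define an iteration operator $\mathcal{T}$ on a closed convex subset $\mathcal{K}$ of $C^{1,\alpha}$ shock curves as follows. Given $\tilde r\in\mathcal{K}$ bounding a domain $\tilde\Omega$, solve the fixed boundary problem for the regularized equation with Dirichlet data $\rho=\rho_1$ on $\Gamma_{\text{sonic}}$, Neumann condition \eqref{2.45a} on the wedge $\Gamma_0$, oblique condition $M\rho=0$ on $\{r=\tilde r(\theta)\}$, and \eqref{2.47a} at $P_2$. Thanks to the $\delta$-separation, $\beta_2<0$ stays bounded away from $0$ outside a neighbourhood of $P_2$, so existence of $\rho^{\tilde r}$ follows from the Perron construction of \cite{gl1} combined with the oblique-derivative Schauder theory of Lieberman \cite{g1,g4}. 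The updated shock $\mathcal{T}(\tilde r)$ is obtained by integrating \eqref{2.42a} backwards from $P_1$ using the trace of $\rho^{\tilde r}$. Uniform $C^{1,\alpha}$ estimates on the shock (from the ODE) and Schauder estimates on $\rho^{\tilde r}$ show that $\mathcal{T}$ maps $\mathcal{K}$ compactly into itself, and Schauder's fixed point theorem produces $(\rho^{\varepsilon,\delta},r^{\varepsilon,\delta})$.

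Passing to the limits $\varepsilon\to 0$ and then $\delta\to 0$ requires parameter-independent estimates. The upper bound $\rho\le\rho_1$ follows from the maximum principle, and interior $C^{2,\alpha}$ together with boundary $C^{1,\alpha}$ estimates away from $\Gamma_{\text{sonic}}\cup\{P_2\}$ are standard once the obliqueness is controlled. The delicate point, and in my view the main obstacle, is a uniform strict lower bound $\rho\ge\rho_0+\eta_0$ on $\Gamma_{\text{shock}}$ with $\eta_0>0$ independent of $(\varepsilon,\delta)$: this uniform transonicity yields property (i) and guarantees that the limiting shock never grazes $C_0$. I would establish it by a Hopf-type argument on $\Gamma_{\text{shock}}$ combined with an explicit subsolution comparison exploiting the convex wedge geometry and the monotonicity of $\bar c(\cdot,\rho_0)$. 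Property (ii), strict convexity of the shock in $(\xi,\eta)$-coordinates, then follows by differentiating \eqref{2.42a} along $\Gamma_{\text{shock}}$, expressing $r''(\theta)$ through $\rho_r,\rho_\theta$, and applying the strong maximum principle and Hopf's lemma to the tangent angle, with degeneracy of strict convexity only at $P_1$ due to tangential matching with the incident planar shock $S_0$.

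For properties (iii) and (iv), I would localize the limit solution near $\Gamma_{\text{sonic}}$ and pass to the $(x,y)$-coordinates of \S 5, in which $\Gamma_{\text{sonic}}=\{x=0\}$ and the reduced equation for $\psi=c^2(\rho_1)-c^2(\rho)$ takes the leading-order form \eqref{1.10}. Setting $W=\psi-c_1 x$ converts this to a Dirichlet problem with $W=0$ on $\{x=0\}$ and a degenerate-elliptic operator of Tricomi type. Following the method of Bae--Chen--Feldman \cite{bcm}, I would construct quadratic sub- and supersolutions of the form $\mu x^2\pm\nu y^2$ in a parabolic neighbourhood to obtain $|\psi-c_1 x|\le Cx^2$, upgrade via degenerate Schauder estimates to $\rho\in C^{1,\alpha}$ up to $\overline{\Gamma_{\text{sonic}}}\setminus\{P_1\}$, and combine with the constant state $\rho_1$ on the hyperbolic side to get the Lipschitz bound across $\Gamma_{\text{sonic}}$, proving (iii). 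Optimality (iv) follows because the matching subsolution forces $\psi_x\to c_1>0$ as $x\to 0^+$, so any $C^{1,\beta}$ extension across $\Gamma_{\text{sonic}}$ would contradict the smoothness of state $(1)$ on the hyperbolic side; the corresponding sharpness at $P_1$ is inherited from the tangential asymptotic matching with $S_0$.
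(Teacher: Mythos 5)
Your overall architecture coincides with the paper's (the $\varepsilon\Delta$ regularization, the $\delta$-separation of the approximate shock from $C_0$ with a modified update of \eqref{2.42a}, Perron's method with Lieberman's oblique-derivative theory for the fixed-boundary problem, a Schauder fixed point for the free boundary, the limits $\varepsilon,\delta\to 0$, and a Bae--Chen--Feldman analysis of $\psi=c_1^2-c^2(\rho)$ near $\Gamma_{\rm sonic}$). The genuine gap is exactly at the step you yourself single out as the main obstacle: the uniform transonicity, i.e.\ property (i). You propose to prove a lower bound $\rho\ge\rho_0+\eta_0$ on $\Gamma_{\rm shock}$, uniform in $(\varepsilon,\delta)$, by ``a Hopf-type argument combined with an explicit subsolution''. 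This cannot work as stated. At $P_2$ the boundary operator $M$ is not oblique ($\mu=-2r^2(c^2-\bar{c}^2)r'(\theta)$ vanishes because $r'(\theta_w)=0$), so Hopf's lemma is unavailable precisely at the point where degeneracy threatens; and the one-point Dirichlet datum \eqref{2.47a} gives $\rho^{\varepsilon,\delta}(P_2)=(\bar{c}_{\rho_0})^{-1}(r^{\varepsilon,\delta}(\theta_w))$, which is only bounded below by $(\bar{c}_{\rho_0})^{-1}(c(\rho_0)+\delta)\to\rho_0$ as $\delta\to0$. Thus the lower bound on $\rho$ at $P_2$ is \emph{equivalent} to the separation of the shock from $C_0$, which is what one is trying to prove: the argument is circular. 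The paper resolves this differently: it passes to the limit first, admits the possibly degenerate configurations in which $r(\theta)\equiv c(\rho_0)$ near the wedge (Cases 2--3 of Lemma \ref{16}), and then excludes them a posteriori (Lemma \ref{5} and the lemma following Lemma \ref{convex}) by a quantitative mechanism: two-sided barriers of the form $c_0^2+A(c_0-r)^{1/2}\pm B(c_0-r)^{\beta}\pm D(\theta-\theta_0)^2$ pin down the exact growth $c^2(\rho)-c_0^2\asymp(c_0-r)^{1/2}$ at a hypothetical contact point, and an anisotropic rescaling $u(S,T)=S^{-1/5}v(S^{-12/5},\,y_0+S^{-14/5}T)$ combined with the Harnack inequality yields $x_0^{-7/20}\le C$, a contradiction. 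Your outline supplies no comparable quantitative step, and this is precisely the point the paper emphasizes as having been open even for the $90^\circ$ wedge.

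Two secondary points. For (ii), differentiating the shock ODE only gives $\eta''=f_\rho\,\rho'$ after observing $f_\xi+f_\eta\eta'=0$; the sign of $\eta''$ then requires both the explicit computation $\partial f/\partial\bar{c}^2>0$ in \eqref{3.67} and, crucially, the monotonicity of $\rho$ along $\Gamma_{\rm shock}$ (Lemma \ref{13}), a nontrivial global statement proved in the appendix by the $\Gamma_C$/$\Gamma_D$ curve construction; your appeal to the strong maximum principle for the ``tangent angle'' tacitly assumes this monotonicity rather than proving it. Near $\Gamma_{\rm sonic}$, quadratic comparison functions $\mu x^2\pm\nu y^2$ are not the right objects: the decisive lower bound is the \emph{linear} one $\psi\ge\mu c_1x$ (Lemma \ref{5.2}, subsolution $\mu x\,g(y)$), and the two-sided estimate actually proved is $|\psi-c_1x|\le Cx^{1+\alpha}$ for every $\alpha<1$ (Propositions \ref{18}--\ref{19}, comparison functions $A\,x^{1+\alpha}(1-y^2)+B\,xy^2$); the bound $Cx^2$ you claim is neither established by such barriers nor needed. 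Your optimality argument (the jump of $\psi_x$ from $c_1$ to $0$ across $\Gamma_{\rm sonic}$) is the correct one, but it rests on the linear lower bound just mentioned, not on quadratic barriers.
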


In particular, Theorem \ref{2.1} implies the following facts:
\begin{enumerate}
\item[(a)] The diffracted shock $\Gamma_{shock}$ definitely is not degenerate at point $P_2$.
This has been an open question even when the wedge angle is
$\frac{\pi}{2}$ as in \cite{e}, though it is physically plausible.

\item[(b)] The curvature of the diffracted shock $\Gamma_{shock}$ away from point $P_2$ is strictly convex
and has a jump at point $P_1$ from a positive value to zero, while
the strict convexity of the curvature fails at $P_2$.

\item[(c)] The optimal regularity of solutions
across $\Gamma_{sonic}$ and at $P_1$ from the inside is $C^{0,1}$,
{\it i.e.}, Lipschitz continuity.
\end{enumerate}

We establish {Theorem \ref{1}} in two main steps. First, we solve
the regularized approximate free boundary problem for $Q$ involving
two small parameters $\varepsilon$ and $\delta$, introduced in \S 3.
Then we analyze the limits $\varepsilon\rightarrow0$ and
$\delta\rightarrow0$, and prove that the limits yield a solution of
{\it Problem 3}, {\it i.e.}, \eqref{2.40a}--\eqref{2.43a}, in \S 4.
The optimal regularity is established in \S 5.

\section{Regularized Approximate Problem}

In this section we first formulate the regularized approximate free
boundary problem and establish the existence of solutions to this
problem as approximate solutions to the original problem. To solve
the free boundary problem, we formulate the fixed point argument in
terms of the position of the free boundary. There are two main
difficulties in establishing the existence of solutions: The first
is that the ellipticity degenerates at the sonic circle
$\Gamma_{\text{sonic}}$; and the second is that the free boundary
$\Gamma_{\text{shock}}$ may coincide with the sonic circle $C_0$ of
state (0) in an iteration, which would make the iteration
impossible. We overcome these difficulties as described below.

\subsection{Approximate Problem and Existence Theorem for Approximate Solutions}

For fixed $\varepsilon>0$, introduce a regularized operator:
$$
Q^{\varepsilon}:=Q+\varepsilon\triangle,
$$
where $\triangle$ represents the Laplace operator in the
self-similar coordinates. For a given curve $r(\theta)$, we first
solve the fixed boundary value problem \eqref{2.40a}--\eqref{2.41a},
\eqref{2.45a}--\eqref{2.43a}, and \eqref{2.47a} with $Q$ replaced by
$Q^\varepsilon$; then we obtain a new shock position
$\tilde{r}(\theta)$ by integrating \eqref{2.42a}:
\begin{equation}\label{2.48}
\tilde{r}(\theta)=r_1+\int_{\theta_1}^{\theta}g(r(s),s,\rho(s,r(s)))ds
\qquad\text{for}\ \theta\in[\theta_w,\theta_1), \end{equation} where
$g$ is defined in \eqref{2.42a}. Note that, on the right side of
\eqref{2.48}, we evaluate all the quantities along the old shock
position $r(\theta)$.

With this, it seems that the free boundary could be obtained by
solving a fixed boundary problem and then by integrating the shock
evolution equation. However, we face the second difficulty as
indicated above, that is, $\tilde{r}(\theta)$ may meet the sonic
circle $C_0$ of state (0). To overcome the new difficulty, we
introduce another small, positive parameter $\delta$ which is fixed
and define the iteration set of $r$,
$\mathcal{K}^{\varepsilon,\delta}$, which is a closed, convex subset
of the H\"{o}lder space $C^{1+\alpha_1}([\theta_w,\theta_1])$, where
$\alpha_1$ depends on $\varepsilon$ and $\delta$ to be specified
later. The functions in $\mathcal{K}^{\varepsilon,\delta}$ satisfy

\medskip
\noindent
 ({\bf $K_1$}) \,\, $r(\theta_1)=r_1$;

\noindent ({\bf $K_2$}) \,\, $r'(\theta_w)=0$;

\noindent ({\bf $K_3$}) \,\, $c(\rho_0)+\delta\leq r(\theta_w)$;

\noindent ({\bf $K_4$}) \,\, $0\leq
r'(\theta)\leq\frac{r^2_1}{\bar{c}(\rho_0)}$ \,\, for
$\theta_w\leq\theta\leq\theta_1$.

\medskip
\noindent When the difficulty occurs, we modify $\tilde{r}(\theta)$
slightly somewhere as
$r(\theta)=c(\rho_0)+\delta+A(\theta-\theta_w)^3+B(\theta-\theta_w)^n$,
where $A,B$, and $n$ will be uniquely determined. Then we define a
mapping on $\mathcal{K}^{\varepsilon,\delta}$:
$$
J\,: \, r\rightarrow\tilde{r}.
$$

\medskip
We now restate the regularized approximate problem as follows: For
fixed $\varepsilon,\delta>0$, the equation for $\rho$ in the
subsonic region is \begin{equation}\label{3.1}
Q^{\varepsilon}\rho=\big((c^{2}-r^{2}+\varepsilon)\rho_{r}\big)_{r}
+\frac{c^2+\varepsilon}{r}\rho_{r}
+\big(\frac{c^{2}+\varepsilon}{r^{2}}\rho_{\theta}\big)_{\theta}=0;
\end{equation} the shock evolution equation remains the same when $r\geq
c(\rho_0)+2\delta$: \begin{equation}\label{3.2}
\begin{cases}
\frac{dr}{d\theta}=g(r,\theta,\rho), \\[1.5mm]
r(\theta_1)=r_1;
\end{cases}
\end{equation}
and \begin{equation}\label{3.2a}
r(\theta)=c(\rho_0)+\delta+A(\theta-\theta_w)^3+B(\theta-\theta_w)^n
\end{equation}
 for some constants $A,B$, and $n$ on the boundary when
\eqref{3.2} does not hold; the remaining boundary conditions as
before are \begin{equation}\label{3.3}
M\rho=\beta\cdot\nabla\rho=0\qquad \text{on}\
\Gamma_{\text{shock}}=\{(r,\theta)\,:\, \theta_w<\theta<\theta_1\},
\end{equation} \begin{equation}\label{3.4} \rho=\rho_1\quad \text{on}\
\Gamma_{\text{sonic}},\qquad\,\,\, \rho_{\bf \nu}=0\quad \text{on}\
\Gamma_0, \end{equation} where $ \nnu$ is the outward normal to
$\Omega$ at $\Gamma_0$; and \begin{equation}\label{3.5}
\rho(P_2)=\bar{\rho}=(\bar{c}_{\rho_0})^{-1}(r(\theta_w)).
\end{equation}

Let $V=\{P_1,P_2,O,P_3\}$ denote the corners of $\Omega$, and
$V'=V\backslash\{P_2\}$. Set $\Omega'=\overline{\Omega}\backslash
(V\cup \Gamma_{\text{shock}})$. For $\Xi\in V$, we define the corner
region
$$
\Omega_{\Xi}(\sigma):=\{x\in\Omega\,:\,\text{dist}(x,\Xi)\leq
\sigma\}, \qquad \Omega_V(\sigma):=\cup_{P\in
V}\Omega_{\Xi}(\sigma).
$$
We define a region that is close to $\Gamma_{\text{shock}}$, but
does not contain corner $P_1$ by taking a covering of
$\Gamma_{\text{shock}}$  with a ball of radius $\delta$ centered at
the points on $\Gamma_{\text{shock}}$ which are bounded away from
$P_1$. Define
$$
\Gamma'(\sigma):=\{\Xi\in \Gamma_{\text{shock}}\, :\,
 \text{dist}(\Xi,P_1)>\sigma\}
$$
and
$$
\Gamma(\sigma)=\{x\in \Omega\cap \big(\cup_{\Xi\in
\Gamma'(\sigma)}B_{\sigma}(\Xi)\big)\},
$$
where $B_{\sigma}(\Xi)$ is a ball of radius $\sigma$ centered at
$\Xi$. We then define \begin{equation} \label{3.22}
C_{b}^{a}\equiv\{u\, :\, \|u\|^{b}_{a}
:=\sup_{\sigma>0}\big(\sigma^{a+b}\|u\|_{a,\bar{\Omega}\backslash(\Gamma(\sigma)\cup
\Omega_{V'}(\sigma))}\big)<\infty\}. \end{equation}

We focus now on the proof of the following existence theorem in this
section.

\begin{theorem}\label{4}
For any $\varepsilon\in(0,\varepsilon_0)$ and
$\delta\in(0,\delta_0)$ for some $\varepsilon_0,\delta_0>0$, there
exists a solution
$(\rho^{\varepsilon,\delta},r^{\varepsilon,\delta})\in
C_{(-\gamma_1)}^{2+\alpha}(\Omega^{\varepsilon,\delta})\times
C^{1+\alpha}([\theta_w,\theta_1])$ to the regularized free boundary
problem \eqref{3.1}--\eqref{3.5} such that
 \begin{equation}\label{3.21}
\rho_0<\bar{\rho}^{\varepsilon,\delta}\leq\rho^{\varepsilon,\delta}<\rho_1,\
\quad\ c^2(\rho^{\varepsilon,\delta})\geq r^2\qquad\,\,\, \text{in}\
\overline{\Omega}^{\varepsilon,\delta} \end{equation}
 for some
$\alpha,\gamma\in(0,1)$, which depend on $\varepsilon$, $\delta$,
and the data $(\rho_0, \rho_1, \theta_w)$. Furthermore, the solution
satisfies \eqref{3.2} at the points of
$\Gamma^{\varepsilon,\delta}_{\text{shock}}$ where
$r^{\varepsilon,\delta}\geq c(\rho_0)+2\delta$. The function
$r^{\varepsilon,\delta}(\theta)$, defining the position of the free
boundary $\Gamma^{\varepsilon,\delta}_{\text{shock}}$, is in
$\mathcal{K}^{\varepsilon,\delta}$. Here
$\Omega^{\varepsilon,\delta}$ is bounded by
$\Gamma^{\varepsilon,\delta}_{\text{shock}}$,
$\Gamma_{\text{sonic}}$, and $\Gamma_0$.
\end{theorem}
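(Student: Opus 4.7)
The plan is to prove Theorem \ref{4} by a Schauder fixed-point argument on the set $\mathcal{K}^{\varepsilon,\delta}\subset C^{1+\alpha_1}([\theta_w,\theta_1])$ of admissible free-boundary curves, carried out in terms of the map $J\colon r\mapsto\tilde r$ defined in \eqref{2.48} (with the cubic/polynomial modification \eqref{3.2a} inserted exactly when the unmodified update would violate $\tilde r\geq c(\rho_0)+\delta$ on a neighborhood of $\theta_w$). To make the map well defined we must, for each $r\in\mathcal{K}^{\varepsilon,\delta}$, solve the associated fixed-boundary problem for the uniformly elliptic operator $Q^{\varepsilon}$ with the oblique/Neumann/Dirichlet data \eqref{3.3}--\eqref{3.5} on the fixed domain $\Omega$ cut out by $r(\theta)$, $\Gamma_{\text{sonic}}$, and $\Gamma_0$, and then integrate \eqref{3.2} (or impose \eqref{3.2a}) to get $\tilde r$.

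For the fixed-boundary step I would first check that $Q^{\varepsilon}$ is uniformly elliptic on $\Omega$ with ellipticity constants depending on $\varepsilon$ and on the lower bound $c(\rho_0)+\delta$ for $r(\theta_w)$ provided by $(K_3)$, that the oblique vector $(\beta_1,\beta_2)$ from \eqref{2.9} satisfies a quantitative obliqueness $\beta\cdot(1,-r'(\theta))\geq\mu_0>0$ everywhere on $\Gamma_{\text{shock}}$ away from $P_2$ because $c^2(\rho)-\bar c^2(\rho,\rho_0)$ and $r'(\theta)$ stay positive there by $(K_4)$ and the bound $r\geq c(\rho_0)+\delta$, and that the loss of obliqueness at $P_2$ has been converted, via \eqref{2.47a}, into the Dirichlet prescription $\rho(P_2)=\bar\rho$. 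Existence of a solution $\rho\in C^{2+\alpha}_{(-\gamma_1)}(\Omega)$ then follows by Perron's method in the Lieberman--Gilbarg--Trudinger framework as in \cite{gl1}; the sub/supersolutions $\rho_0$ and $\rho_1$ are immediate, and interior/boundary Schauder estimates combined with weighted estimates near the corners of $V'$ and near $\Gamma_{\text{shock}}$ give the regularity claimed, with the weight exponent $\gamma_1$ absorbing the classical corner singularities.

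The a priori bounds \eqref{3.21} come next. The maximum principle applied to $Q^{\varepsilon}\rho=0$ with the given boundary data yields $\bar\rho\leq\rho\leq\rho_1$ in $\overline\Omega$, where the strict inequalities at interior points follow from Hopf's lemma applied to $\rho-\rho_0$ and $\rho_1-\rho$; the pseudo-subsonic condition $c^2(\rho)\geq r^2$ is then extracted from the structure of $Q^{\varepsilon}$ together with $\bar\rho>\rho_0$ and the bound $r\leq r_1=c_1$ enforced by $(K_4)$ and $(K_1)$. With these bounds in hand I define $\tilde r$: the shock evolution equation \eqref{3.2} has a locally Lipschitz right-hand side in $\theta$ because $\rho$ is Hölder continuous and $r>\bar c(\rho,\rho_0)$ strictly, so $\tilde r$ exists as long as it stays above $c(\rho_0)+2\delta$; at the first point $\theta_*$ where it would drop to $c(\rho_0)+2\delta$ I switch to the cubic ansatz \eqref{3.2a} and fix $A$, $B$, $n$ by matching $\tilde r$, $\tilde r'$ at $\theta_*$ and imposing $\tilde r(\theta_w)\geq c(\rho_0)+\delta$ with $\tilde r'(\theta_w)=0$. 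A direct check of the resulting $\tilde r$ confirms $(K_1)$--$(K_4)$, so $J$ maps $\mathcal{K}^{\varepsilon,\delta}$ into itself.

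For compactness and continuity, the weighted Schauder bound on $\rho$ together with \eqref{3.2} gives $\tilde r\in C^{1+\alpha}$ with a norm controlled independently of $r\in\mathcal{K}^{\varepsilon,\delta}$, so $J(\mathcal{K}^{\varepsilon,\delta})$ is precompact in $C^{1+\alpha_1}$ for $\alpha_1<\alpha$; continuity of $J$ follows from stability of the Perron solution under $C^{1+\alpha_1}$-perturbations of the domain boundary, proved by linearizing $Q^{\varepsilon}$ and using the quantitative obliqueness together with maximum-principle comparison. Schauder's theorem then delivers a fixed point, which is the desired $(\rho^{\varepsilon,\delta},r^{\varepsilon,\delta})$. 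The main obstacle I anticipate is the triple difficulty at $P_2$: the obliqueness of $M$ degenerates, the Dirichlet value $\bar\rho$ itself depends (through \eqref{2.47a}) on the iterate $r(\theta_w)$, and the free-boundary equation \eqref{3.2} degenerates as $r\to\bar c(\rho,\rho_0)$. Handling these three coupled degeneracies simultaneously, uniformly for $r\in\mathcal{K}^{\varepsilon,\delta}$, is precisely what forces the introduction of the auxiliary constraint $(K_3)$ and the modification \eqref{3.2a}, and the bulk of the work is in verifying that the uniform estimates and continuity of $J$ survive this modification so that a fixed point actually exists.
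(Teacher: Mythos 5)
Your overall architecture (outer Schauder fixed point on the shock position via the map $J$ of \eqref{2.48}, an inner fixed-boundary solve by Perron's method in Lieberman's framework, and the polynomial modification \eqref{3.2a} near $P_2$) is the same as the paper's, but there are two concrete gaps that break the argument as written. First, you assert that $Q^{\varepsilon}$ is uniformly elliptic on $\Omega$ for each iterate and that the condition $c^2(\rho)\geq r^2$ is ``extracted from the structure of $Q^{\varepsilon}$''; this is circular. The coefficient of $\rho_{rr}$ is $c^2(\rho)-r^2+\varepsilon$, and since $r$ ranges up to $r_1=c(\rho_1)$ while a priori $\rho$ is only known to lie in $[\bar\rho,\rho_1]$ with $\bar\rho$ close to $\rho_0$, this coefficient can be strongly negative; adding $\varepsilon\Delta$ does not repair it. The paper resolves this by inserting the cut-off $\zeta(c^2-r^2)+\varepsilon$ into the operator (equation \eqref{3.7a}), solving the cut-off problem, and only afterwards proving $c^2(\rho^{\varepsilon,\delta})\geq r^2$ by the delicate maximum-principle argument of Lemma \ref{2} (interior minimum, shock boundary via the sign of $\beta_1$, wall via Hopf), which then allows the cut-off to be removed. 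Without this step you have neither existence (ellipticity unknown) nor the second inequality in \eqref{3.21}. Second, to even define $\tilde r$ by \eqref{3.57} you need $r(\theta)\geq\bar c(\rho(r(\theta),\theta),\rho_0)$ along the old shock so that $g$ is real; you simply assert $r>\bar c(\rho,\rho_0)$ ``strictly,'' but this is precisely the nontrivial Lemma \ref{3}, proved by a separate barrier/maximum-principle argument on $\Gamma_{\text{shock}}$, and it is not a consequence of the constraints $(K_1)$--$(K_4)$ alone.

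Two further points are gestured at rather than proved, and they carry most of the technical weight. You treat the quasilinear fixed-boundary problem directly by ``quasilinear Perron'' with sub/supersolutions $\rho_0,\rho_1$; Lieberman's Perron process in \cite{gl1} is for linear oblique-derivative problems, and the paper accordingly linearizes by replacing $\rho$ in the coefficients by $w$ in the convex set $\mathcal{W}$, solves the linear problem (Lemma \ref{10}), and closes the nonlinearity by a separate Schauder fixed point in $\mathcal{W}$ (Lemma \ref{11}); some such device is needed. Finally, at $P_2$ the loss of obliqueness is not disposed of by the one-point condition \eqref{2.47a} alone: the paper needs the coordinate change $(\hat\xi,\hat\eta)$ flattening $\beta$, the truncated domains $\Omega_\sigma$, the exponential barrier $v=\bar\rho^{\varepsilon,\delta}+c(1-e^{-l\hat\eta})$ to obtain continuity of $u$ at $P_2$, a sub/supersolution comparison to make Perron global, and the weighted H\"older-gradient estimate of Lemma \ref{8} near $P_2$, which is what furnishes the compactness you invoke for both fixed points. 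Naming these as ``the main obstacle'' without supplying the corresponding estimates leaves the existence claim of Theorem \ref{4} unproved at exactly the point where the problem is hardest.
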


We establish {Theorem \ref{4}} in the following steps whose details
are given in the following four subsections.

\medskip
{\it Step 1}. Since the governing equation \eqref{3.1} is nonlinear
and the ellipticity is not known a priori, we impose a cut-off
function in the equation $Q^{\varepsilon}\rho=0$.

We introduce a smooth increasing function $\zeta\in C^{\infty}$ such
that \begin{equation} \zeta(s):=
\begin{cases}
s \qquad&\text{if}\ s\geq0,\\
-\frac{1}{2}\varepsilon  \qquad&\text{if}\ s<-\varepsilon,
\end{cases}
\end{equation}
and $|\zeta'(s)|\leq1$. We then consider the following  modified
governing equation:
\begin{equation}\label{3.7a}
\begin{array}{lll}
Q^{\varepsilon,+}\rho&=&\big((\zeta(c^{2}-r^{2})+\varepsilon)\rho_{r}\big)_{r}
+(\frac{c^{2}+\varepsilon}{r^{2}}\rho_{\theta})_{\theta}
+\big(\frac{1}{r}(\zeta(c^{2}-r^{2})+\varepsilon)+r\big)\rho_{r}
\\[3mm]
&=&\sum_{i=1}^2
D_{i}\big(a^{\varepsilon}_{ii}(r,\theta,\rho)D_{i}\rho\big)
+b^{\varepsilon}(r,\rho)D_{r}\rho=0\qquad\quad\text{in}\,\,\,\Omega.
\end{array}
\end{equation}

\medskip
{\it Step 2}. We make some estimates for a solution to the linear
problem with fixed boundary $\Gamma_{\text{shock}}$ defined by
$r(\theta)\in \mathcal{K}^{\varepsilon,\delta}$ and establish the
Schauder estimates on $\Gamma_{\text{shock}}$.

\medskip
{\it Step 3}. We employ a technique in Lieberman \cite{gl1} to solve
the problem with the oblique derivative boundary condition
$M\rho=0$. Using the H\"{o}lder gradient bounds to the linear
problem, we establish the existence results for the linear fixed
boundary problem in the polar coordinates, via the Perron method
developed in \cite{gl1}.

\medskip
{\it Step 4}. We apply the Schauder fixed point theorem to conclude
the existence of a solution to the free boundary problem with the
oblique derivative boundary condition. Finally we remove the cut-off
function by the a priori estimates to conclude the results.

\subsection{Proof of Theorem \ref{4}: Regularized Linear Fixed Boundary Value Problem}
Replace $\rho$ in the coefficients $a^{\varepsilon}_{ii}$ and
$b^{\varepsilon}$ in \eqref{3.7a} and $\beta_i$ in \eqref{3.3} by a
function $w$ in a set $\mathcal{W}$ that is defined with respect to
a given boundary component
$\Gamma^{\varepsilon,\delta}_{\text{shock}}$ and depends on the
given values $\rho_0$, $\rho_1$, and
$\bar{\rho}^{\varepsilon,\delta}=(\bar{c}_{\rho_0})^{-1}(r^{\varepsilon,\delta}(\theta_w))$:

\smallskip
\begin{definition}\label{6}
The elements $w$ of $\mathcal{W}\subset C_{(-\gamma_1)}^{2}$ satisfy

({\bf W1}) $\rho_0<\bar{\rho}^{\varepsilon,\delta}\leq w\leq
\rho_1$, $w=\rho_1$ on $\Gamma_{\text{sonic}}$, $w_{\nnu}=0$ on
$\Gamma_0$, and $w(P_2)=\bar{\rho}^{\varepsilon,\delta}$;

\medskip
({\bf W2}) $\|w\|_{\alpha_0}\leq K_0$,
$\|w\|_{2+\alpha_0,\Omega_{\text{loc}}}\leq K_0$,
 and $\|w\|_{1+\mu,\Gamma(d_0)}\leq K_0$;

\medskip
({\bf W2}) $\|w\|^{(-\gamma_1)}_2\leq K_1$.
\end{definition}

\smallskip
The weighted H\"{o}lder space is defined by \eqref{3.22}. The values
of $\gamma_1, \alpha_0,\mu\in(0,1)$, and $d_0$, as well as the
values of $K_0$ and $K_1$, will be specified later. Obviously,
$\mathcal{W}$ is closed, bounded, and convex.

The quasilinear equation \eqref{3.7a} and the oblique derivative
boundary condition \eqref{3.3} are now replaced by the linear
equation and linear oblique derivative boundary condition on
$\Gamma^{\varepsilon,\delta}_{\text{shock}}:=\{(r(\theta),\theta)\,:\,\theta_w\leq\theta\leq\theta_1\}$:
\begin{equation}
\begin{array}{lll} \label{3.23}
&L^{\varepsilon,+}u:=\sum_{i=1}^2D_i\big(a^{\varepsilon}_{ii}(\Xi,w)D_iu\big)
+b^{\varepsilon}(\Xi,w)D_ru=0\qquad &\text{in}\ \Omega,\\[2mm]
&Mu:=\beta_{1}(\Xi,w)D_ru+\beta_{2}(\Xi,w)D_{\theta}u=0\quad
&\text{on}\ \Gamma^{\varepsilon,\delta}_{\text{shock}},
\end{array}
\end{equation}
with given $r(\theta)\in\mathcal{K}^{\varepsilon,\delta}\subset
C^{1+\alpha_1}([\theta_w,\theta_1])\cap C^{2}((\theta_w,\theta_1))$
and $w\in\mathcal{W}$, where the repeated indices are summed as
usual. Because of the cut-off function $\zeta$, $L^{\varepsilon,+}$
is uniformly elliptic in $\Omega$ with the ellipticity ratio
depending on the data and $\varepsilon$.

In this section, we demonstrate the key point that, thanks to the
uniform distance between the sonic circle $C_0$ of the right state
$(0)$ and $\Gamma_{\text{shock}}$, for a given function
$w\in\mathcal{W}$, the solution $u$ to the linear equation
\eqref{3.23} with the remaining boundary conditions:
\begin{equation}\label{3.24} u=\rho_1\,\,\, \text{on}\
\Gamma_{\text{shock}},\qquad u_{\mathbf{\nu}}=0\,\,\, \text{on}\
\Gamma_{0},\qquad u(P_2)=\bar{\rho}^{\varepsilon,\delta},
\end{equation}
satisfies the H\"{o}lder and Schauder estimates in $\Omega'$ \, and \,
the
uniform bound in $C^{1+\mu}(\Gamma(d_0))$ near
$\Gamma^{\varepsilon,\delta}_{\text{shock}}$ for any
$\mu<\min\{\gamma_1,\alpha_1\}$. This bound gives rise to enough
compactness to establish the existence of a solution to the
quasilinear problem by applying the Schauder fixed point theorem.

\medskip
First, we state the Schauder estimates up to the fixed boundary
$\Gamma_{\text{sonic}}$ with the Dirichlet boundary condition and to
$\Gamma_0$ with the Neumann boundary condition, and the H\"{o}lder
estimates at the corners in $V'$.

\begin{lemma}\label{7}
Assume that $\Gamma_{\text{shock}}$ is parameterized as
$\{(r(\theta),\theta)\}$ with
$r(\theta)\in\mathcal{K}^{\varepsilon,\delta}$ for some $\alpha_1$
and that $w\in\mathcal{W}$ for given $K_0$, $K_1$, $\alpha_0$, and
$\gamma$. Then there exist $\gamma_V,\alpha_{\Omega}\in(0,1)$ such
that any solution $u\in
C^{2+\alpha_{\Omega}}_{\text{loc}}(\Omega')\cap
C^{\gamma_V}(\Omega_{V'}(d_0))$ to the linear problem
\eqref{3.23}--\eqref{3.24} satisfies \begin{equation}\label{3.26}
\|u\|_{\gamma,\Omega_{V'}(d_0)}\leq C_1\|u\|_0 \qquad\mbox{for any
$\gamma\leq \gamma_V$,} \end{equation}
 and \begin{equation}\label{3.27}
\|u\|_{2+\alpha,\Omega'_{\text{loc}}}\leq C_2\|u\|_0 \qquad
\mbox{for any $\alpha\leq\alpha_{\Omega}$}. \end{equation}
 The exponent
$\gamma_V$ depends on the data $\rho_0$, $\rho_1$, and $\theta_w$;
and both $\alpha_{\Omega}$ and $\gamma_V$ depend on $\varepsilon$
but are independent of $\alpha_1$ and $\gamma_1$. The constant $C_2$
is independent of $K_1$ but depends on $K_0$.
\end{lemma}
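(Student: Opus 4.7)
The plan is to decompose the bounds via the covering $\overline{\Omega}\setminus V=\Omega'_{\text{loc}}\cup\Omega_{V'}(d_0)\cup\Gamma(d_0)$. For \eqref{3.27} we work in $\Omega'_{\text{loc}}$, i.e., compact subsets of $\overline{\Omega}$ that stay away from the corners $V$ and from the shock $\Gamma^{\varepsilon,\delta}_{\text{shock}}$, so only the smooth fixed boundaries $\Gamma_{\text{sonic}}$ (Dirichlet) and $\Gamma_0$ (Neumann) are relevant. The cut-off $\zeta$ together with $\varepsilon$ renders $L^{\varepsilon,+}$ uniformly elliptic with ratio depending only on $\varepsilon$, and $w\in\mathcal{W}$ supplies $C^{\alpha_0}$ coefficients in $a^{\varepsilon}_{ii}(\cdot,w)$ and $b^{\varepsilon}(\cdot,w)$; classical interior and boundary Schauder theory (Gilbarg--Trudinger, Lieberman \cite{g1}) then yields the $C^{2+\alpha}$-bound in \eqref{3.27}, with $\alpha_\Omega$ depending on $\varepsilon$ and the data but independent of $K_1,\alpha_1,\gamma_1$; the constant $C_2$ depends on $K_0$ through the H\"{o}lder norm of the coefficients but not on $K_1$ since only first derivatives of $w$ enter.

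For \eqref{3.26}, at each corner $P\in V'=\{P_1,O,P_3\}$ I would set up a barrier comparison argument. The three corners have different structures: $O$ is of pure Neumann--Neumann type, $P_3$ is Dirichlet--Neumann, and $P_1$ is Dirichlet--oblique. In each case, after translating $P$ to the origin and freezing coefficients, build a comparison function $\phi(\rho,\varphi)=A\rho^{\gamma_V}h(\varphi)$ in local polar coordinates, with the angular profile $h$ chosen so that $\phi$ is a supersolution of the frozen-coefficient equation and satisfies the two homogeneous boundary conditions on the corresponding rays meeting at $P$. The exponent $\gamma_V\in(0,1)$ is the first positive root of the associated angular eigenvalue problem and depends only on the opening angle and the boundary coefficients at $P$. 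Applying the weak maximum principle for the uniformly elliptic $L^{\varepsilon,+}$ to $\pm(u-u(P))-\|u\|_0\phi$ in a small neighbourhood of $P$ yields $|u(x)-u(P)|\le C\|u\|_0|x-P|^{\gamma_V}$, and a standard Campanato/Morrey upgrade produces the H\"{o}lder bound \eqref{3.26}.

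The main obstacle is the corner $P_1$, where the oblique boundary condition has coefficients $\beta_i(\cdot,w)$ depending on the iterates $w$ and $r$, and the obliqueness is quantified by $\mu=-2r^2(c^2-\bar c^2)r'(\theta)$; yet $\gamma_V$ must be independent of $\alpha_1$ and $\gamma_1$. The resolution is to set up the angular eigenvalue problem at $P_1$ using only the frozen value $\beta(P_1,\rho_1)$ and the interior angle between $\Gamma_{\text{sonic}}$ and $\Gamma^{\varepsilon,\delta}_{\text{shock}}$ at $P_1$. Since $r(\theta_1)=r_1$ and $r'(\theta_1)$ is fixed by the shock evolution law \eqref{2.42a} with known data $(\rho_0,\rho_1)$, both the direction of $\beta$ and the corner angle depend only on the data. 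The strict lower bound $r\ge c(\rho_0)+\delta$ built into $\mathcal{K}^{\varepsilon,\delta}$ together with the bound $c^2(w)\ge c^2(\rho_0)$ from $w\in\mathcal{W}$ then guarantees uniform strict obliqueness at $P_1$, so the angular eigenvalue problem is nondegenerate and yields a $\gamma_V>0$ of the required uniform type. The analogous, and easier, computations at $O$ and $P_3$ use only the wedge angle $\theta_w$ and the direction of $\Gamma_{\text{sonic}}$, which are data.
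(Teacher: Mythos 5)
Your overall strategy is legitimate but genuinely different from the paper's. For \eqref{3.27} you and the paper do the same thing (uniform ellipticity from the cut-off and $\varepsilon$, then standard interior and boundary Schauder, with $C_2$ depending on $K_0$ through the $C^{\alpha_0}$-norm of the coefficients). For \eqref{3.26}, however, the paper does not build corner barriers at all: at $P_1$ and $P_3$ it simply invokes Theorem 1 of Lieberman \cite{g4} on mixed boundary value problems, and the only corner it works on by hand is the origin $O$ --- precisely the one you call ``easier''. There the opening angle $\pi-\theta_w$ exceeds $\pi$, and the paper's device is the unfolding map $(r,\theta)\mapsto\bigl(r,\tfrac{\pi}{\pi-\theta_w}(\theta-\theta_w)\bigr)$, which is available because the polar-form coefficients of $\tilde{Q}^{\varepsilon,+}$ do not depend on $\theta$; the unfolded operator is uniformly elliptic with $L^{\infty}$ coefficients in Cartesian variables, the map has constant Jacobian, and the H\"older norms are shown to be equivalent, so a flat-boundary estimate gives $\gamma_V$ depending only on the data, the angle and $\varepsilon$ (and, as is used later in Lemma 3.5, not on $K_0$, $K_1$, $\alpha_1$). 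A frozen-coefficient barrier at the reflex Neumann--Neumann corner is feasible, but it is not the easy case: the polar form of the operator is singular at $r=0$, so the construction must be done in Cartesian variables, and the perturbation terms $(a_{ij}-a_{ij}(O))D_{ij}\phi$ are only $o(1)\rho^{\gamma_V-2}$ through the continuity of $w$, so your constant (though not your exponent) picks up a $K_0$-dependence that the paper's route avoids.

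The concrete gap is your treatment of $P_1$. The lemma is stated for an arbitrary $r\in\mathcal{K}^{\varepsilon,\delta}$, and membership in $\mathcal{K}^{\varepsilon,\delta}$ imposes only $(K_1)$--$(K_4)$, not the shock evolution law \eqref{2.42a}; so $r'(\theta_1)$ is only known to lie in $[0,r_1^2/\bar{c}(\rho_0)]$ and is not ``fixed by the data''. Consequently neither the corner angle between $\Gamma_{\text{sonic}}$ and $\Gamma_{\text{shock}}$ at $P_1$ nor the direction of $\beta(P_1,w)$ is determined by $(\rho_0,\rho_1,\theta_w)$ alone. More seriously, the obliqueness at $P_1$ is $\mu=\beta_1-r'\beta_2=-2r_1^2\bigl(c^2-\bar{c}^2\bigr)r'(\theta_1)$, which vanishes exactly when $r'(\theta_1)=0$, a possibility that $(K_4)$ permits; the bounds $r\ge c(\rho_0)+\delta$ and $c^2(w)\ge c^2(\rho_0)$ that you invoke control the factor $c^2-\bar{c}^2$, not $r'$, so they do not yield the ``uniform strict obliqueness at $P_1$'' on which your angular eigenvalue problem rests. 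As written, this step fails. To repair it you must either restrict to the iterates actually produced by the map $J$ (whose derivative at $\theta_1$ is pinned by \eqref{2.42a}) and prove the exponent is uniform over the resulting compact range of corner data, or use a corner estimate that tolerates tangential degeneration of $\beta$ at a single boundary point --- which is what the paper's appeal to Lieberman \cite{g4}, and the separate treatment of the genuinely degenerate point $P_2$ in Lemma \ref{8}, are designed to provide.
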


\begin{proof}
The corner estimates at $P_1$ and $P_3$ directly follow from the
results in Theorem 1, Lieberman \cite{g4}. Near the origin, the
wedge angle is larger than $\pi$; thanks to the symmetry of the
governing equation in the $\theta$-axis with form \eqref{1.8b}, we
derive the corner estimate as follows.

First we flat out the boundary by introducing the transformation:
$$
(r', \theta')=(r, \frac{\pi}{\pi-\theta_w}(\theta-\theta_w)), \qquad
(\xi', \eta')=(r'\cos\theta', r'\sin\theta').
$$
Then the governing equation in the $(r',\theta')$-coordinates takes
the form
$$
\begin{array}{lll}
\tilde{Q}^{\varepsilon,+}\rho
&=&\big((\zeta(c^2(w)-r^2)+\varepsilon)\rho_r\big)_r
+\big(\frac{\zeta(c^2(w)-r^2)+\varepsilon}{r}+r\big)\rho_r\\[1.5mm]
&&+\frac{\pi^2}{(\pi-\theta_w)^2}
\big(\frac{c^2(w)+\varepsilon}{r^2}\rho_{\theta}\big)_{\theta},
\end{array}
$$
and
\begin{equation}\label{3.27a}
\begin{array}{lll}
\tilde{Q}^{\varepsilon,+}\rho
&=&\Big(\big((\zeta(c^2(w)-r^2)+\varepsilon)\frac{\xi^2}{r^2}+\frac{\pi^2}{(\pi-\theta_w)^2}
\frac{(c^2+\varepsilon)\eta^2}{r^2}\big)\rho_{\xi}\Big)_{\xi}\\[1.5mm]
&&+
\Big(\big((\zeta(c^2(w)-r^2)+\varepsilon)\frac{\xi\eta}{r^2}-(\frac{\pi^2}{(\pi-\theta_w)^2}
\frac{(c^2+\varepsilon)\xi\eta}{r^2}\big)\rho_{\eta}\Big)_{\xi}\\[1.5mm]
&&+
\Big(\big((\zeta(c^2(w)-r^2)+\varepsilon)\frac{\xi\eta}{r^2}-\frac{\pi^2}{(\pi-\theta_w)^2}
\frac{(c^2+\varepsilon)\xi\eta}{r^2}\big)\rho_{\xi}\Big)_{\eta}\\[1.5mm]
&&+\Big(\big((\zeta(c^2(w)-r^2)+\varepsilon)\frac{\eta^2}{r^2}+\frac{\pi^2}{(\pi-\theta_w)^2}
\frac{(c^2+\varepsilon)\xi^2}{r^2}\big)\rho_{\eta}\Big)_{\eta}+\xi\rho_{\xi}+\eta\rho_{\eta}
\end{array}
\end{equation}
in the $(\xi',\eta')$-coordinates, where we drop $'$ for simplicity
without confusion. The eigenvalues of \eqref{3.27a} are
$$
\lambda_1=\zeta(c^2-r^2)+\varepsilon,\qquad
\lambda_2=(\frac{\pi}{\pi-\theta_w})^2(c^2+\varepsilon).
$$
Note that the transformation from the $(\xi,\eta)-$coordinates to
$(\xi',\eta')-$coordinates is invertible and the $C^{\alpha}$--norms
are equivalent, since $
\det\big(\frac{D(\xi',\eta')}{D(\xi,\eta)}\big)\equiv\frac{\pi}{\pi-\theta_w}>0
$ for all $(r,\theta)\in\mathbb{R}^2$.

As for the proof of the equivalence of the two norms, we have two
cases:

\begin{figure}
  \centering
      \includegraphics[width=0.31\textwidth]{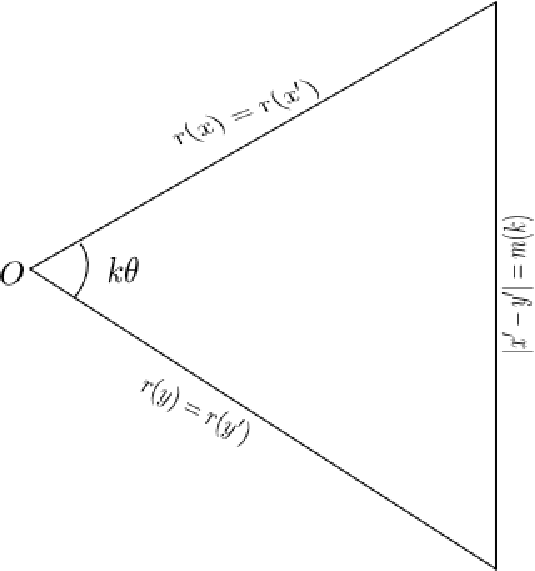}
  \caption{Scaling of the angles}
  \label{tr}
\end{figure}

\medskip
{\it Case 1}. If $\theta\geq \frac{\pi}{2}$  as in Fig. \ref{tr},
then
$$
\theta'=k\theta\geq \frac{\pi}{2} \qquad\mbox{with}\,\,\,
k=\frac{\pi-\theta_w}{\pi}.
$$
Since $1<k<2$, $|x-y|\geq\max\{r(x),r(y)\}$ and
$|x'-y'|\geq\max\{r(x'),r(y')\}$. Then the equivalence of the two
$C^{1,1}-$norms can be easily shown by setting $r(x)=r(x')$ and
$r(y)=r(y')$.

\medskip
{\it Case 2}. If $\theta<\frac{\pi}{2}$, then the distance between
two points in the $(\xi,\eta)-$coordinates and
$(\xi',\eta')-$coordinates is equivalent. By the cosine law, we
define
$$
m(k):=|x'-y'|^2=r(x)^2+r(y)^2-2r(x)r(y)\cos(k\theta),
$$
and then $ \frac{\partial m(k)}{\partial
k}=2kr(x)r(y)\sin(k\theta)>0. $ Thus,
$$
|x-y|=\sqrt{m(1)}\leq \sqrt{m(k)}=|x'-y'|\leq \sqrt{m(2)}\leq
2\sqrt{m(1)}=2|x-y|.
$$

Therefore, we can obtain the H\"{o}lder estimate of the solution at
$O$. Here $\gamma_V$ depends on the angle at the corner, a fixed
value that depends on the data $(\rho_0, \rho_1, \theta_w)$, and the
ellipticity ratio $\varepsilon$, but independent of $\gamma_1$,
$\alpha_1$, $K_0$, and $K_1$.

Finally, we can use the standard interior and boundary Schauder
estimates to obtain the local estimate \eqref{3.27}. The constant
$C_2$ depends on $\varepsilon$, the $C^{\alpha}-$norm of the
coefficients $a_{ij}$, and  the domain.
\end{proof}

Because the interior Schauder estimates can be further applied, a
solution in $C^{2+\alpha}_{\text{loc}}(\Omega')$ is actually in
$C^3_{\text{loc}}(\Omega)$.

\medskip
We next establish the H\"{o}lder gradient estimates on
$\Gamma_{\text{shock}}$. It is at this point that we need to derive
the basic estimates at point $P_2$ where the boundary operator $M$
is not oblique. In order to avoid handling the Neumann boundary
condition on the wedge boundary $\theta=\theta_w$ separately at each
step of this proof, we reflect $\Omega$ across the wedge boundary
$\theta=\theta_w$, without further comment, {\it i.e.}, $\Omega$
includes $\Sigma_0$, and let $\Gamma_{\text{shock}}$ stand for the
full $C^{1+\alpha_1}$--boundary in Lemma \ref{8} below. In addition,
we extend $\tilde{u}(2\theta_w-\theta)=u(\theta)$ for
$\theta\in(\theta_w,\theta_1)$ in a small neighborhood of
$\theta_w$. We still denote $\tilde{u}$ by $u$ for simplicity
without confusion.

\begin{lemma}\label{8}
Assume that $\Gamma_{\text{shock}}$ is given by
$\{(r(\theta),\theta)\}$ with
$r(\theta)\in\mathcal{K}^{\varepsilon,\delta}$ for some $\alpha_1$
and that $w\in\mathcal{W}$ for given $K_0$, $K_1$, $\alpha_0$, and
$\gamma_1$. Then there exists a positive constant $d_0$ such that,
for any $d\leq d_0$, the solution $u\in
C^1_{\text{loc}}(\Omega\cup\Gamma_{\text{shock}})\cup
C^3_{\text{loc}}(\Omega)$ to the linear problem
\eqref{3.23}--\eqref{3.24} satisfies \begin{equation}\label{3.28}
\|u\|_{1+\mu,\Gamma(d)\backslash B_{d}(P_1)} \leq
C(\varepsilon,\delta,\alpha_1,\gamma_1,K_1,d_0)\|u\|_0
\end{equation} for any $\mu<\min\{\gamma_1,\alpha_1\}$.
\end{lemma}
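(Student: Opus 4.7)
The plan is to split $\Gamma(d)\setminus B_d(P_1)$ into a subregion bounded away from the non-oblique point $P_2$, on which $Mu=0$ is uniformly oblique and classical boundary Schauder-type estimates apply, together with a small neighborhood of $P_2$, on which I would exploit the reflection across $\theta=\theta_w$ already set up before the lemma to convert the loss of obliqueness into an automatic compatibility condition.

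For the bulk region $\Gamma(d)\cap\{\text{dist}(\cdot,P_2)>\kappa\}\setminus B_d(P_1)$ with $\kappa>0$, the obliqueness quantity $|\beta\cdot(1,-r'(\theta))|=2r^2(c^2-\bar c^2)\,r'(\theta)$ from Section 2.1 is bounded below uniformly (with bound depending on $\kappa$), since by $(K_4)$ one has $r'(\theta)>0$ except at $\theta=\theta_w$. Moreover $a_{ii}^{\varepsilon}, b^{\varepsilon}$, and $\beta_i$ all lie in $C^{\min\{\alpha_0,\alpha_1,\gamma_1\}}$ with norms controlled by $K_0$ (through $w\in\mathcal{W}$) and by $\|r\|_{C^{1+\alpha_1}}$; importantly, the bound $K_1$ on the second derivatives of $w$ never enters the coefficients themselves. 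Lieberman's boundary $C^{1+\mu}$ estimate \cite{g2} for uniformly elliptic equations with $C^\mu$ oblique boundary data then yields the desired bound on this piece for any $\mu<\min\{\alpha_1,\gamma_1\}$, with a constant depending on $\kappa$ but independent of $K_1$.

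The main difficulty is to make the constant uniform as $\kappa\to 0$, i.e., to obtain the estimate up to $P_2$. Here I would use the reflection $\tilde u(r,2\theta_w-\theta)=u(r,\theta)$: since $u_{\nnu}=0$ on $\Gamma_0$, $\tilde u$ is $C^1$ and solves an equation of the same form on the reflected domain, and since $r'(\theta_w)=0$, $\Gamma_{\text{shock}}$ continues across $P_2$ as a $C^{1+\alpha_1}$ curve. In this extended picture, the tangent to $\Gamma_{\text{shock}}$ at $P_2$ is the $\theta$-direction, so the boundary operator there reduces to the tangential derivative $\beta_2\tilde u_\theta$; but reflection symmetry forces $\tilde u_\theta(P_2)=0$ automatically, which is exactly the compatibility that the vanishing obliqueness fails to provide. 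Flattening $\Gamma_{\text{shock}}$ near $P_2$ by a local $C^{1+\alpha_1}$ diffeomorphism and running the barrier/scaling method of Lieberman \cite{g1,g3} for oblique problems with H\"older-degenerating obliqueness coefficient, combined with the symmetry-forced vanishing of the tangential derivative, gives a $C^{1+\mu}$ bound up to $P_2$; patching with the bulk estimate closes the argument. The expected main obstacle is precisely this degenerate point: without the reflection symmetry, no $C^{1+\mu}$ regularity at $P_2$ is available, only H\"older continuity. The exponent restriction $\mu<\min\{\alpha_1,\gamma_1\}$ simply records the H\"older regularity of the coefficients $\beta_i(w,r')$ inherited from $w\in\mathcal{W}$ and $r\in\mathcal{K}^{\varepsilon,\delta}$.
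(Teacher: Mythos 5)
Your splitting into a uniformly oblique bulk piece plus a neighborhood of $P_2$ matches the paper's first step (the paper disposes of $\Gamma(d)\setminus\{B_{d_0}(P_1)\cup B_{d_0}(P_2)\}$ by the standard oblique-derivative Schauder estimate, Theorem 6.30 of \cite{gt}), and the even reflection across $\theta=\theta_w$ is not a new ingredient: it is exactly the setup the paper installs immediately before the lemma. The problem is the part you relegate to one sentence, namely the estimate up to $P_2$, which is the entire content of the lemma. There you argue that, after reflection, the boundary operator at $P_2$ is purely tangential, that symmetry forces $u_\theta(P_2)=0$, and that this ``compatibility'' plus a ``barrier/scaling method of Lieberman \cite{g1,g3} for oblique problems with H\"older-degenerating obliqueness'' yields $C^{1+\mu}$ up to $P_2$. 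That is a genuine gap: the reflection does not remove the degeneracy (the obliqueness ratio still behaves like $r'(\theta)\to 0$ as $\theta\to\theta_w$), the pointwise vanishing of $u_\theta$ at $P_2$ is a qualitative compatibility statement and does not by itself control $[Du]_{\mu}$ in a full boundary neighborhood of $P_2$, and the cited references do not contain the tool you invoke: \cite{g1} is about the regularized distance (used in the paper only as a technical device) and \cite{g3} concerns uniformly oblique conditions with discontinuous data, not $C^{1+\mu}$ estimates at a point where $\beta$ becomes tangential. Indeed the introduction of the paper stresses that Lieberman's results cannot be applied directly precisely because of this degeneracy at $P_2$.

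What the paper actually does near $P_2$ (following \cite{sbe2}) is a quantitative barrier argument for the auxiliary function $z=Mv$ with $v=u/(1+\|Du\|_0)$: differentiating the boundary operator, using $Lv=0$ together with the ellipticity and the nonvanishing of $\beta_2$ near $P_2$ to solve algebraically for all of $D^2v$ in terms of $D(z\pm g)$, and then proving $L_1(z\pm g)\lessgtr 0$ for $g=g_0\zeta^{\mu}$ built from the regularized distance $\zeta$, so the maximum principle gives $|z|\leq C d^{\mu}$; Schauder estimates and interpolation then convert this into $|D^2v|\leq Cd^{\mu-1}$ and finally $\|u\|_{1+\mu}\leq C\|u\|_0$. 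Note also that this argument differentiates the coefficients $a^{\varepsilon}_{ij}(\Xi,w)$ and $\beta_i(\Xi,w)$ once and twice, so the bounds $|Dw|\leq K_1 d^{\gamma_1-1}$, $|D^2w|\leq K_1 d^{\gamma_1-2}$ from (W2) enter essentially; your assertion that $K_1$ ``never enters the coefficients'' is at odds with why the constant in \eqref{3.28} is allowed to depend on $K_1$, and it signals that your sketch is not engaging with the actual mechanism of the estimate. To repair the proposal you would need to supply, in place of the symmetry remark, a concrete quantitative argument at the degenerate point — for instance the barrier construction for $Mv$ just described.
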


\begin{proof}
Away from a neighborhood $B_{d_0}(P_2)$ of $P_2$, the operator $M$
is oblique. Thus we can apply Theorem 6.30 in \cite{gt} to obtain
\eqref{3.28} in $\Gamma(d)\backslash \{B_{d_0}(P_1)\cup
B_{d_0}(P_2)\}$, with a constant $C$ depending on $\epsilon$,
$\alpha_1$, $\Omega$, $d_0$, and $K_0$. For the estimates near
$P_2$, the proof is adopted from \cite{sbe2}, which is similar. The
main idea is that, for a given solution $u$ to
\eqref{3.23}--\eqref{3.24}, we define \begin{equation}\label{3.29}
v=\frac{u}{1+\|Du\|_0}\quad \text{and}\quad
z=Mv=\sum_{i=1}^2\beta_{i}(\Xi)D_iv. \end{equation} For $d_0>0$
small enough, $O\notin B_{d_0}(P_2)$. Then we construct barrier
functions $\pm g$ for $z$ on $B:=
B_{d_0}(P_2)\cap\overline{\Omega}$, by finding a suitable positive,
increasing function $g$, $g(0)=0$, such that
$$
|z|\leq g.
$$
More precisely, $g(\zeta)=g_0\zeta^{\mu}$ for any $\mu<\gamma_2$.
This barrier function leads to \begin{equation}\label{3.46}
|D(z+g)|\leq \|(z+g)\|^{(1-\mu)}_{1+\gamma_2}d^{\mu-1}\leq
C(m)d^{\mu-1}\qquad\text{for}\ d<d_0, \end{equation} which leads to
$$
\|v\|_{1+\mu}\leq C.
$$
Finally, using the definition of $v$ in \eqref{3.29}, we apply the
interpolation inequality with small $\vartheta>0$ to obtain
\begin{equation} \|u\|_{1+\mu}\leq C\big(1+\|Du\|_0\big) \leq
C\big(1+\vartheta \|u\|_{1+\mu}+C_{\vartheta}\|u\|_0\big)
\end{equation} and thus \eqref{3.28} holds. Therefore, we obtain the
H\"{o}lder gradient estimate at $\Gamma_{\text{shock}}$ for the
solution $u$ of \eqref{3.23}. See \cite{sbe2} for more details.
\end{proof}

\smallskip
Now we focus on the existence of solutions in Theorem \ref{4} for
problem \eqref{3.23}--\eqref{3.24}. First we introduce two
definitions with some modification in comparison with \cite{gl1}.

\medskip
We say that problem \eqref{3.23}--\eqref{3.24} is locally solvable
if, for each $y\in\overline{\Omega}$, there exists a neighborhood
$O(y)$ such that,  for any $h\in C(\overline{N})$ with $N:=O(y)\cap
\{\overline{\Omega}\backslash(\{P_2\}\cup \Gamma_{\text{sonic}})\}$,
there exists a solution $v\in C^2(N)\cap C(\overline{N})$ of the
problem $L^{\varepsilon,+}v=0$ in $N\cap \Omega$, $Mv=0$ on
$N\cap\partial \Omega$, and $v=h$ on $\partial'N$, when $P_2\notin
\overline{N}(y)$;  or $L^{\varepsilon,+}v=0$ in $N\cap \Omega$, $M
v=0$ on $N\cap\partial \Omega$, $v=h$ on $\partial'N$, and
$v|_{P_2}=\bar{\rho}^{\varepsilon,\delta}$, when $P_2\in
\overline{N}(y)$. Here $\partial'N=\partial N\cap\Omega$. For
brevity, we denote this function $v$ by $(h)_y$ to emphasize its
dependence on $h$ and $y$.

A subsolution (supersolution) of \eqref{3.23}--\eqref{3.24} is a
function $w\in C(\overline{\Omega})$ with
$$
w(r(\theta_w),\theta_w)\le \bar{\rho}^{\varepsilon,\delta}\quad\,
(w(r(\theta_w),\theta_w)\ge  \bar{\rho}^{\varepsilon,\delta})
$$
such
that, for any $y\in\overline{\Omega}$, if $h\geq w$ ($h\leq w$) on
$\partial'N$, then $(h)_y\geq w$ ($(h)_y\leq w$) in $N$. The set of
all subsolutions (supersolutions) is denoted by $S^-$ ($S^+$).

\medskip
We now establish the existence of  solutions to problem
\eqref{3.23}--\eqref{3.24}.

\begin{lemma}\label{10}
Assume that $\Gamma_{\text{shock}}$ is given by
$\{(r(\theta),\theta)\}$ with
$r(\theta)\in\mathcal{K}^{\varepsilon,\delta}$ for some $\alpha_1$
and that $w\in\mathcal{W}$ for given $K_0$, $K_1$, $\alpha_0$, and
$\gamma_1$. Then there exist $\gamma_V,\alpha_{\Omega}\in(0,1)$ and
$d_0>0$, which are independent of $\gamma_1$ and $\alpha_1$, such
that there exists a solution
$$
u^{\varepsilon,\delta} \in C^{1+\mu}(\Gamma(d)\backslash
B_{d}(P_1))\cap C^{2+\alpha}(\Omega') \cap
C^{\gamma}(\Omega_{V'}(d))
$$
to the linear problem \eqref{3.23}--\eqref{3.24} for any $\alpha\leq
\alpha_{\Omega}$, $\mu<\min\{\gamma_1,\alpha_1\}$,
$\gamma\leq\gamma_V$, and $d\leq d_0$, which satisfies
\eqref{3.26}--\eqref{3.27} and \eqref{3.28}.
\end{lemma}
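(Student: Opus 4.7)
The plan is to prove Lemma \ref{10} by the Perron method in the spirit of Lieberman \cite{gl1}, with the sub/supersolution notion modified at $P_2$ as in the two definitions just introduced, and then to upgrade the continuous Perron solution to the stated regularity class by invoking Lemmas \ref{7} and \ref{8}. Since the cut-off operator $L^{\varepsilon,+}$ is uniformly elliptic in $\Omega$ with ellipticity ratio controlled by $\varepsilon$, $\delta$, and the bounds on $w$, and since $\Gamma^{\varepsilon,\delta}_{\text{shock}}$ is $C^{1+\alpha_1}$ away from $P_2$ with coefficients $\beta_i(\cdot,w)$ giving a uniformly oblique condition on $\Gamma^{\varepsilon,\delta}_{\text{shock}}\setminus\{P_2\}$, all the classical local solvability ingredients are available once one resolves the degenerate-obliqueness issue at $P_2$.

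First I would verify local solvability in the modified sense. For $y$ in the interior of $\Omega$ this is the classical Dirichlet problem in a ball, solved by Schauder theory. At a boundary point on $\Gamma_{\text{sonic}}$ (Dirichlet), on $\Gamma_0$ (regular Neumann), on $\Gamma^{\varepsilon,\delta}_{\text{shock}}\setminus\{P_2\}$ (regular oblique), and at the corners $P_1$, $P_3$, $O$, local mixed problems are solvable using the results underlying Lemma \ref{7}: one takes $N=O(y)\cap\Omega$, a small neighborhood in which the relevant boundary portion is $C^{1+\alpha_1}$, and applies the standard existence theory (Schauder for Dirichlet/Neumann, and Lieberman \cite{g4} for oblique-with-Dirichlet-on-$\partial'N$). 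At $P_2$, where $\beta_2(P_2,w)=0$ and $\beta_1(P_2,w)=0$, I instead replace the oblique condition on the $\Gamma^{\varepsilon,\delta}_{\text{shock}}$-piece in $N(P_2)$ by the single pointwise Dirichlet value $v(P_2)=\bar{\rho}^{\varepsilon,\delta}$ and the Neumann condition on the wedge piece; solvability of this mixed Dirichlet/Neumann/oblique problem on a small neighborhood, with appropriate Dirichlet data on $\partial' N$, again follows from the theory cited above since the oblique part stays away from $P_2$.

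Next I would exhibit sub- and supersolutions. The constants $w^-\equiv\bar{\rho}^{\varepsilon,\delta}$ and $w^+\equiv\rho_1$ fit the equation $L^{\varepsilon,+}u=0$ trivially, satisfy $w^-\le\rho_1$, $w^+\ge\rho_1$ on $\Gamma_{\text{sonic}}$, $w^\pm_{\nnu}=0$ on $\Gamma_0$, and the boundary operator $M$ annihilates constants, so each passes the comparison test against local solutions $(h)_y$ by the maximum principle applied on $N$ (taking care at $P_2$: the pointwise value $\bar{\rho}^{\varepsilon,\delta}$ lies between $w^-$ and $w^+$, which is exactly the condition demanded in the modified definitions). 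Hence $S^\pm$ are both nonempty and $S^-\ni v\le\rho_1$ pointwise. Then I form the Perron envelope
\[
u(x):=\sup\{v(x)\,:\,v\in S^-\},
\]
and by the now-standard argument in \cite{gl1} (upper-semicontinuous regularization; the two local solvability statements above; the strong maximum principle and Hopf lemma applied locally to verify that the regularization still lies in $S^-$; a barrier argument at each boundary point of $\partial'N$ for the Dirichlet data to be attained) one concludes that $u\in C(\overline{\Omega})$, $u\in C^2(\Omega)$, $L^{\varepsilon,+}u=0$ in $\Omega$, $Mu=0$ on $\Gamma^{\varepsilon,\delta}_{\text{shock}}\setminus\{P_2\}$, $u_{\nnu}=0$ on $\Gamma_0$, $u=\rho_1$ on $\Gamma_{\text{sonic}}$, and $u(P_2)=\bar{\rho}^{\varepsilon,\delta}$. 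The hardest bookkeeping here is verifying that the modified local solvability at $P_2$ is compatible with the sub/supersolution machinery; the pointwise Dirichlet condition replaces the obliqueness test at that single point and causes no real difficulty because $P_2$ is isolated on $\Gamma^{\varepsilon,\delta}_{\text{shock}}$ and the comparison principle still applies on $N(P_2)$ with the prescribed value.

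Finally, having produced the continuous weak solution $u^{\varepsilon,\delta}$, I would feed it into Lemma \ref{7} to get $u^{\varepsilon,\delta}\in C^{\gamma}(\Omega_{V'}(d))$ for $\gamma\le\gamma_V$ and $u^{\varepsilon,\delta}\in C^{2+\alpha}_{\text{loc}}(\Omega')$ for $\alpha\le\alpha_\Omega$, and into Lemma \ref{8} to get $u^{\varepsilon,\delta}\in C^{1+\mu}(\Gamma(d)\setminus B_d(P_1))$ for any $\mu<\min\{\gamma_1,\alpha_1\}$, with constants independent of $\gamma_1,\alpha_1$. This delivers the membership and estimates claimed. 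The main obstacle in the whole argument is the handling of $P_2$: the standard Perron scheme of \cite{gl1} must be adapted so that the pointwise Dirichlet condition $v|_{P_2}=\bar{\rho}^{\varepsilon,\delta}$ fits seamlessly into the definitions of sub/supersolution and local solvability, which is precisely why the two variants of the local problem ($P_2\notin N(y)$ vs.\ $P_2\in N(y)$) were introduced above.
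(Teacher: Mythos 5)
There is a genuine gap in your treatment of $P_2$, which is precisely the hard point of this lemma. First, a factual slip: at $P_2$ one has $\beta_1(P_2,w)=0$ but $\beta_2(P_2,w)=-\bar{c}^2(c^2-r^2)<0$; the failure is that $\beta$ becomes tangential (since $r'(\theta_w)=0$, the obliqueness $\mu=\beta\cdot(1,-r')$ vanishes), not that both coefficients vanish. More seriously, your local problem at $P_2$ discards the condition $Mv=0$ on the shock piece inside $N(P_2)$ and keeps only the one-point value $v(P_2)=\bar{\rho}^{\varepsilon,\delta}$; but the local solvability needed for the Perron scheme (and the statement of the lemma itself, via \eqref{3.23}) requires $Mv=0$ on $N\cap\partial\Omega$ up to $P_2$, where the obliqueness degenerates non-uniformly, so the claim that ``the oblique part stays away from $P_2$'' is false in this setting and the cited oblique theory does not apply there. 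Moreover, for a uniformly elliptic operator in two dimensions a single boundary point has zero capacity, so a pointwise Dirichlet value at $P_2$ cannot simply be imposed on top of free Dirichlet data on $\partial' N$ and a Neumann condition on the wedge: without additional structure the Perron solution would ignore that condition, and your argument gives no mechanism forcing $u(P_2)=\bar{\rho}^{\varepsilon,\delta}$ or even continuity of $u$ at $P_2$.

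The paper resolves exactly this by a construction you would need to supply: near $P_2$ it introduces coordinates $(\hat{\xi},\hat{\eta})$ adapted to $\beta$ so that $M$ becomes $\partial_{\hat{\xi}}$, truncates the tip by working in $\Omega_{\sigma}$ with the data $u=\bar{\rho}^{\varepsilon,\delta}$ on the bottom segment $\Sigma_\sigma$, solves by Lieberman's method, passes $\sigma\to 0_+$, and then proves continuity at $P_2$ with the explicit barrier $v=\bar{\rho}^{\varepsilon,\delta}+c\,(1-e^{-l\hat{\eta}})$, whose sign properties under both the equation and the transformed boundary operator are what actually enforce $\bar{\rho}^{\varepsilon,\delta}\le u\le v$ and hence the point condition. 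In addition, because the sub/supersolution definitions are modified at $P_2$, the key Perron comparison $\sup_\Omega(w^--w^+)=0$ is no longer automatic; the paper proves it by taking the maximizing point of $w^--w^+$ nearest to $P_2$ and propagating the maximum set toward $P_2$ via lifts and the strong maximum principle, reaching a contradiction with $w^-(P_2)\le w^+(P_2)$. Your proposal treats this as part of the ``standard argument,'' but with the modified definitions it requires this (or an equivalent) argument. The final step of your proposal — feeding the Perron solution into Lemmas \ref{7} and \ref{8} to obtain the stated regularity — is fine.
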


\begin{proof}
For fixed $\varepsilon,\delta>0$, we denote
$u^{\varepsilon,\delta}=u$ in the proof without confusion. We use
the Perron method to show the existence of a solution to problem
\eqref{3.23}--\eqref{3.24}.

It suffices to show the local existence at $P_2$. In fact, let $B_2$
be a sufficiently small neighborhood of $P_2$ with smooth boundary
such that $O\notin B_2$, $\beta_1\leq0$, and  $\beta_2<0$. Then we
study the local existence in the $(\xi,\eta)-$coordinates in $B_2$.
Reflect the region  $B_2$ across $\theta=\theta_w$ to obtain a new
region, which is still denoted by $B_2$. Then we introduce the
coordinate transform in a neighborhood of $P_2$:
\begin{equation}
\hat{\xi}=\hat{\xi}(r,\theta),\qquad \hat{\eta}=\hat{\eta}(r,\theta)
\end{equation}
such that
\begin{equation*}
\begin{array}{ll}
\hat{\xi}(r_w,\theta_w)=0, \qquad\qquad &\hat{\eta}(r_w,\theta_w)=0,\\
\frac{\partial(\hat{\xi}, \hat{\eta})}{\partial r}=(0, -1),\,
&\frac{\partial(\hat{\xi},  \hat{\eta})}{\partial\theta}
=-(\frac{1}{\beta_{2}}, \frac{\beta_1}{\beta_2}).
\end{array}
\end{equation*}
Let $\Gamma_{shock}:=\{(\hat{\xi},\hat{\eta})\,:\,
\hat{\eta}=f(\hat{\xi})\} =\{(r,\theta)\,:\, r=r(\theta)\}$ in
$\hat{B}_2$. Then $\hat{\eta}(r(\theta),
\theta)=f(\hat{\xi}(r(\theta), \theta))$ and hence $ f'(\hat{\xi})
=-(\beta_1-\beta_2r'(\theta))\geq 0, $ and the function
$f(\hat{\xi})$ is increasing in $\hat{\xi}$ on
$\Gamma_{\text{shock}}\cap \hat{B}_2$. Thus, from
$\frac{\partial\hat{\xi}}{\partial\theta}=-\frac{1}{\beta_{2}}>0$
and $\frac{\partial\hat{\xi}}{\partial r}=0$, we have
$$
f(\hat{\xi})\geq 0.
$$

We replace $\Omega$ by $\Omega_{\sigma}$ which is the
$\sigma-$distance from point $P_2$ upward, see Fig. \ref{ex}. On the
bottom straight boundary of $\Omega_{\sigma}$, we impose
$$
u=\bar{\rho}^{\varepsilon,\delta}\qquad\text{on the bottom of}\
\Omega_{\sigma}.
$$
\begin{figure}[h!]
   \centering
      \includegraphics[width=0.4\textwidth]{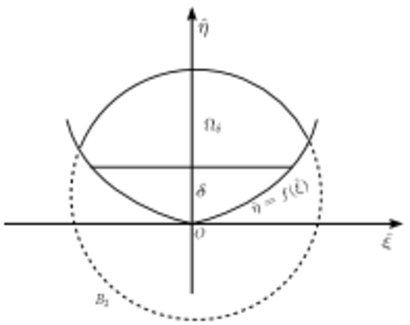}
  \caption{Domain with tip $P_2$ removed}
  \label{ex}
\end{figure}

Now we study the following boundary value problem:
\begin{equation}
\begin{cases}
\hat{L}^{\varepsilon,\delta}u=\sum_{i,j=1}^2\hat{a}_{ij}D_ju+\sum_{i=1}^2\hat{b}_iD_iu=0\qquad\,\,&\text{in}\, \Omega_{\sigma},\\
\hat{M}u=\partial_{\hat{\xi}}u=0\qquad\qquad\qquad\qquad &\text{on}\, \partial \Omega_{\sigma}\cap\Gamma_{\text{shock}},\\
u=h\qquad\qquad\qquad\qquad\qquad\qquad &\text{on}\, \partial B_2\cap\Omega,\\
u=\bar{\rho}^{\varepsilon,\delta}\qquad\qquad\qquad\qquad\qquad\qquad
&\text{on}\, \Sigma_{\sigma},
\end{cases}
\end{equation}
where
\begin{eqnarray*}
&&
\tilde{a}^{\varepsilon}_{11}=\frac{\hat{a}^{\varepsilon}_{11}}{\hat{\beta}^2_{2}},
\qquad   \tilde{a}^{\varepsilon}_{12}=\tilde{a}^{\varepsilon}_{21}
=-\frac{\hat{\beta}_{1}}{\hat{\beta}^2_{2}}\hat{a}^{\varepsilon}_{22},
\qquad \tilde{a}^{\varepsilon}_{22}=\hat{a}^{\varepsilon}_{11}
 +(\frac{\hat{\beta}_{1}}{\hat{\beta}_{2}})^2\hat{a}_{22},\\
&&\tilde{b}^{\varepsilon}_1=
\frac{\partial\hat{a}^{\e}_{11}}{\partial\hat{\eta}}
 -\frac{\hat{a}^{\e}_{22}}{\hat{\beta}^{2}_2}\frac{\partial \hat{\beta}_1}{\partial\hat{\xi}}+\frac{\hat{\beta}_1\hat{a}^{\e}_{22}}{\hat{\beta}^{2}_2}\frac{\partial \hat{\beta}_2}{\partial\hat{\eta}}+\frac{\hat{\beta}_1\hat{a}^{\e}_{22}}{\hat{\beta}^{3}_2}\frac{\partial \hat{\beta}_2}{\partial\hat{\xi}}-\frac{\hat{\beta}^2_1\hat{a}^{\e}_{22}}{\hat{\beta}^{3}_2}\frac{\partial \hat{\beta}_2}{\partial\hat{\eta}}+(\frac{\hat{\beta}_1}{\hat{\beta}_2})^2\frac{\partial \hat{a}^{\e}_{22}}{\partial\hat{\eta}}-\hat{b}^{\e},\\
&&
\tilde{b}^{\e}_2=-\frac{\hat{a}^{\e}_{22}}{\hat{\beta}^{3}_2}\frac{\partial
\hat{\beta}_2}{\partial\hat{\xi}}+\frac{\hat{a}^{\e}_{22}\hat{\beta}_1}{\hat{\beta}^{3}_2}
 +\frac{1}{\hat{\beta}^2}\frac{\partial \hat{a}^{\e}_{22}}{\partial\hat{\xi}}
 -\frac{\hat{\beta}_1}{\hat{\beta}^{2}_2}\frac{\partial \hat{a}^{\e}_{22}}{\partial\hat{\eta}}.
\end{eqnarray*}
Here $\hat{a}^{\e}_{ii}$, $\hat{b}^{\e}$, and $\hat{\beta}_i$,
$i=1,2$, are the coefficients of \eqref{3.23}--\eqref{3.24} in the
$(\hat{\xi}, \hat{\eta})-$coordinates, and $h$ is a continuous
function satisfying $\bar{\rho}^{\varepsilon,\delta}<h\leq\rho_1$.
Following Lieberman \cite{gl1}, there exists a solution
$$
u_{\sigma}\in C(\overline{\Omega}\cap\overline{\hat{B}_2}) \cup
C^{2,\alpha}(\Omega_{\sigma}\cap \hat{B}_2)
$$
for small enough $\hat{B}_2$. The maximum principle holds for
$u_{\sigma}$, which converges locally in $C^2(\Omega\cap \hat{B}_2)$
to a solution in $C^{2+\alpha}(\Omega\cap \hat{B}_2)$ as
$\sigma\rightarrow0_+$.

We now use a barrier function to obtain the continuity of $u$ at
$P_2$. We consider the auxiliary function \begin{equation}
v=\bar{\rho}^{\e,\delta}+c\big(1-e^{-l\,\hat{\eta}}\big),
\end{equation} where $c>0$ and $l>0$ are specified later. For the
oblique derivative boundary condition along
$\Omega_{\sigma}\cap\Gamma_{\text{shock}}$, we have the following
two cases:

\medskip
{\it Case 1}: $\tilde{\beta}\cdot\nnu>0$ and
$\tilde{M}(v-\bar{\rho}^{\e,\delta})\geq0$ when $\hat{\xi}>0$,

{\it Case 2}: $\tilde{\beta}\cdot\nnu<0$ and
$\tilde{M}(v-\bar{\rho}^{\e,\delta})\leq0$ when $\hat{\xi}<0$,

\medskip
\noindent where $\nnu$ denotes the outward normal to
$\Omega_{\sigma}$ at $\Omega_{\sigma}\cap\Gamma_{\text{shock}}$.

Moreover, it is easy to see that $v\geq \bar{\rho}^{\e,\delta}$ on
$\Sigma_{\sigma}$. Choose $C$ large enough such that $v\geq \sup|h|$
on $\partial \hat{B}_2\cap\Omega$. For the equation, we have
$$
\tilde{a}^{\e}_{ij}D_jv+\tilde{b}^{\e}_iD_iv=-ce^{-l \hat{\eta}}
(l^2\tilde{a}^{\e}_{11}-l\tilde{b}^{\e}_1),
$$
which is less than a negative constant by choosing
$l>\frac{\|\tilde{b}^{\e}\|_0}{\lambda}$, where $\lambda\leq
\tilde{a}^{\e}_{11}(\hat{\xi},\hat{\eta})$. Thus,
$$
\bar{\rho}^{\e,\delta}\leq u\leq v.
$$
Then $u$ is continuous at point $P_2$. The continuity of $u$ at the
other points follows from Lieberman's argument in \cite{g2,g4}.

By Lemma \ref{8}, we have $u\in
C^{1,\mu}(\hat{B}_2\cap\overline{\Omega})$.

In order to establish the global existence of solutions, it is
required to show
$$
\sup_\Omega(w^--w^+)=0,
$$
where $w^\pm$ are the supersolution and subsolution of problem
\eqref{3.23}--\eqref{3.24}, respectively.

In fact, we set $m:=\sup\limits_{\Omega}(w^--w^+)$. We assume that
$m>0$ in $\Omega$. Since $w^-(P_2)-w^+(P_2)\le 0$, there exists a
neighborhood  $\hat{B}_2(P_2)$ of $P_2$ such that $w^-(y)-w^+(y)<m$
for $y\in \hat{B}_2(P_2)$. Now we define
$$
\mathcal{Y}:=\{y\in\overline{\Omega} \, :\, w^-(y)-w^+(y)=m\}.
$$
Let $y_0\in \mathcal{Y}$ such that
$$
\text{dist}(y_0,P_2)=\min\limits_{y\in
\mathcal{Y}}\text{dist}(y,P_2).
$$
Let $\bar{w}^{\pm}$ be the lifts of $w^{\pm}$ in $M(y_0)$. We see
that $\bar{w}^--\bar{w}^+\leq m$ on $\partial'N$. The strong maximum
principle implies that either  $\bar{w}^--\bar{w}^+< m$ in $M$ or
$\bar{w}^--\bar{w}^+\equiv m$. Since
$\bar{w}^-(y_0)-\bar{w}^+(y_0)\geq w^-(y_0)-w^+(y_0)=m$, it follows
that $\bar{w}^--\bar{w}^+= m$ in $N$, and hence
$$
\bar{w}^--\bar{w}^+\equiv m \qquad\text{on}\,\, \partial'N,
$$
which contains the point of $\mathcal{Y}$ closer to $P_2$ than
$y_0$. This is a contradiction with the definition of $y_0$.

We refer to Lieberman \cite{g2} for handling the mixed case and both
points $P_1$ and $P_3$. As for the interior and the Dirichlet
boundary condition on the sonic arc $\Gamma_{sonic}$, they are
classical since the equation is uniformly elliptic for fixed $\e>0$
(see Gilbarg-Trudinger \cite{gt}).

With all of these, we then employ the Perron method to establish the
existence of a global solution.
\end{proof}

\subsection{Proof of Theorem \ref{4}: Regularized Nonlinear Fixed Boundary Problem}
We now establish the existence of solutions to the nonlinear problem
\eqref{3.1} with a fixed boundary.

\begin{lemma}\label{11}\label{2}\label{3}\label{13}
For $\e\in(0,\e_0)$ and $\delta\in(0,\delta_0)$, given
$r(\theta)\in\mathcal{K}^{\e,\delta}\subset C^{1+\alpha_1}$, there
exists a solution $\rho^{\e,\delta}\in
C_{(-\gamma_1)}^{2+\alpha}(\Omega^{\e,\delta})$ to problem
\eqref{3.1} and \eqref{3.4}--\eqref{3.5} with the oblique derivative
condition $M\rho^{\e,\delta}=0$ for some
$\alpha(\e,\delta),\gamma_1(\e,\delta)\in(0,1)$ such that
\begin{equation} \label{3.48}
\rho_0<\bar{\rho}^{\e,\delta}\leq\rho^{\e,\delta}\leq \rho_1.
\end{equation} Moreover, for some $d_0>0$, the solution $\rho^{\e,\delta}$
satisfies \begin{equation}\label{3.49}
\|\rho^{\e,\delta}\|_{\gamma,\Gamma(d_0)\cup B_{d_0}(P_1)}\leq K_2,
\end{equation} where $\gamma$ and $K_2$ depend on $\delta$, $\e$, $\gamma_V$,
and $K_1$, but are independent of $\alpha_1$. In addition, the
solutions satisfy the following three properties:
\begin{itemize}
\item[\rm (i)]{\it Ellipticity of  Eq. \eqref{3.1}}:\, $c^2(\rho^{\e,\delta})-r^2\geq0$ in $\overline{\Omega}^{\e,\delta}$;
\item[\rm (ii)]{\it \eqref{2.48} can always be integrated}:\, $\bar{c}(\rho^{\varepsilon,\delta},\rho_0)-r\leq0$ on $\Gamma^{\e,\delta}_{\text{shock}}$;
\item[\rm (iii)]{\it Local behaviour of the density near $\Gamma^{\e,\delta}_{\text{shock}}$ and the convexity of
$\Gamma^{\e,\delta}_{\text{shock}}$}:\, $\rho^{\e,\delta}$ is
monotone on $\Gamma^{\e,\delta}_{\text{shock}}$.
\end{itemize}
\end{lemma}
  The idea is to employ the fixed point theorem to prove the existence of the solution
  and then to find the barrier function to control the behaviour of the solution.
  The proof is long and similar to the one in \cite{sbe2} or \cite{e},
  while the main difference here is the singularity at origin for which we need to treat.
  Thus, we postpone the detailed proof which can be found in Appendix for self-containedness.

\subsection{Proof of Theorem \ref{4}: The Regularized Nonlinear Free Boundary Problem}
We now show the existence of a solution to the regularized free
boundary problem.

\begin{lemma}\label{12}
For each $\e\in(0,\e_0)$ and $\delta\in(0,\delta_0)$ with some
$\e_0>0$ and $\delta_0>0$, there exists a solution
$(\rho^{\e,\delta},r^{\e,\delta})
 \in C_{(-\gamma)}^{2+\alpha}(\Omega^{\e,\delta})
 \times C^{1+\alpha_1}([-\frac{\pi}{2},\theta_1))$
to the regularized free boundary problem \eqref{3.1}--\eqref{3.5}.
\end{lemma}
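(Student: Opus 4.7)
The plan is to set up a Schauder fixed point argument for the free boundary position. For each $r(\theta)\in \mathcal{K}^{\e,\delta}$ we first produce the density profile $\rho[r]$ via Lemma \ref{11} on the associated fixed domain $\Omega[r]$, then read off a new shock curve $\tilde r$ by integrating the shock evolution equation \eqref{2.48} wherever this is legitimate, and finally define the operator $J\colon r\mapsto \tilde r$ on $\mathcal{K}^{\e,\delta}$. The desired solution is produced as a fixed point of $J$. The three properties established in \S 3.3, namely the ellipticity $c^2(\rho^{\e,\delta})\ge r^2$ (Lemma \ref{2}), the uniform separation $r-\bar c(\rho^{\e,\delta},\rho_0)\ge 0$ on $\Gamma_{\text{shock}}$ (Lemma \ref{3}), and the monotonicity of $\rho^{\e,\delta}$ along $\Gamma_{\text{shock}}$ (Lemma \ref{13}), are exactly what is needed to make $J$ well-defined and to keep $\mathcal{K}^{\e,\delta}$ invariant.

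First I would check that $\tilde r$ inherits the defining properties $(K_1)$--$(K_4)$. Property $(K_1)$ is immediate from the initial condition in \eqref{3.2}. For $(K_2)$ and the upper bound in $(K_4)$, Lemma \ref{3} gives $r^{2}\ge\bar c^{2}(\rho,\rho_0)$, and since $r(\theta_w)$ tends to $\bar c$ there (by the one-point Dirichlet condition \eqref{2.47a}), the right side $r\sqrt{r^{2}-\bar c^{2}}/\bar c$ of \eqref{2.42a} vanishes at $\theta_w$ and is controlled by $r_1^2/\bar c(\rho_0)$ on $[\theta_w,\theta_1]$ using $\rho\le\rho_1$. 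The lower bound $r'\ge 0$ in $(K_4)$ is the sign in \eqref{2.34a}. The delicate condition is $(K_3)$: a priori the integrated curve could dip below $c(\rho_0)+\delta$. Here I would use the three-cases alternative: if $\tilde r(\theta)>c(\rho_0)+2\delta$ on the entire interval, \eqref{2.48} is used directly; otherwise, let $\theta_\ast$ be the largest $\theta\in[\theta_w,\theta_1)$ with $\tilde r(\theta_\ast)=c(\rho_0)+2\delta$ and replace $\tilde r$ on $[\theta_w,\theta_\ast]$ by the ansatz
\begin{equation*}
\tilde r(\theta)=c(\rho_0)+\delta+A(\theta-\theta_w)^3+B(\theta-\theta_w)^n,
\end{equation*}
choosing $A$, $B$ and $n$ uniquely so that $\tilde r$, $\tilde r'$ and $\tilde r''$ match at $\theta_\ast$ and $\tilde r'(\theta_w)=0$. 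The cubic/$n$th power structure keeps the right end $C^{1,1}$ and forces $\tilde r(\theta_w)\ge c(\rho_0)+\delta$, restoring $(K_3)$ while preserving the bounds in $(K_4)$ provided $\delta$ is small. Lemma \ref{13} is needed at this step: it prevents $\tilde r$ from oscillating across the level $c(\rho_0)+2\delta$, so that the surgery interval is a single interval $[\theta_w,\theta_\ast]$ and the modification produces a genuine element of $\mathcal{K}^{\e,\delta}$.

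Next I would verify compactness and continuity of $J$. From Lemma \ref{11} the density $\rho[r]$ is in $C^{\gamma}(\Gamma(d_0))$ with bounds depending only on $(\e,\delta,K_1)$ and not on $r$ within $\mathcal{K}^{\e,\delta}$. Feeding this into the right-hand side $g(r,\theta,\rho)$ of \eqref{2.42a}, and integrating, yields $\tilde r\in C^{1+\mu}([\theta_w,\theta_1])$ with $\mu>\alpha_1$ and a uniform bound. Since the modification \eqref{3.2a} preserves $C^{1,1}$ regularity with uniform constants, $J(\mathcal{K}^{\e,\delta})$ is precompact in $C^{1+\alpha_1}([\theta_w,\theta_1])$ by Arzel\`a--Ascoli. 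Continuity of $J$ follows from the continuous dependence of the solution to the fixed boundary problem on the boundary data (Lemma \ref{11} with coefficients depending continuously on $r$), together with continuity of the ODE flow; the patched region $[\theta_w,\theta_\ast]$ depends continuously on $r$ because $\theta_\ast$ is characterized by a non-degenerate transversal crossing (the slope $\tilde r'$ is strictly positive there when $r^2>\bar c^2+c_0\delta$).

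The set $\mathcal{K}^{\e,\delta}$ is closed and convex in $C^{1+\alpha_1}([\theta_w,\theta_1])$, and $J$ is a compact continuous self-map of $\mathcal{K}^{\e,\delta}$, so Schauder's fixed point theorem produces $r^{\e,\delta}\in\mathcal{K}^{\e,\delta}$ with $Jr^{\e,\delta}=r^{\e,\delta}$. The associated density $\rho^{\e,\delta}=\rho[r^{\e,\delta}]$ lies in $C^{2+\alpha}_{(-\gamma)}(\Omega^{\e,\delta})$ by Lemma \ref{11}, and, by Lemmas \ref{2} and \ref{3}, $c^{2}(\rho^{\e,\delta})\ge r^{2}$ throughout $\overline\Omega^{\e,\delta}$ so that the cut-off $\zeta$ is inactive and $Q^{\e,+}\rho^{\e,\delta}=Q^{\e}\rho^{\e,\delta}=0$. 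Hence $(\rho^{\e,\delta},r^{\e,\delta})$ solves \eqref{3.1}--\eqref{3.5}, establishing the lemma. The main obstacle in this plan is verifying that $J$ is genuinely a self-map of $\mathcal{K}^{\e,\delta}$ through the surgery step; everything else is a combination of Lemmas \ref{11}, \ref{2}, \ref{3}, \ref{13}, and standard ODE/Schauder tools.
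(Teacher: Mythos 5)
Your overall architecture is the same as the paper's: solve the fixed-boundary problem by Lemma \ref{11}, integrate \eqref{2.48} to define $J$, perform surgery with the ansatz $c(\rho_0)+\delta+A(\theta-\theta_w)^3+B(\theta-\theta_w)^n$ when the curve dips too low, and close with Schauder's fixed point theorem, invoking Lemmas \ref{2} and \ref{3} to remove the cut-off and to make the integration legitimate. The gap is exactly at the step you flag as the main obstacle, and your prescription for it would not work. You propose to determine $A$, $B$ \emph{and} $n$ by matching $\tilde r$, $\tilde r'$ and $\tilde r''$ at the matching point. First, $C^2$ matching is neither needed (the iteration set lives in $C^{1+\alpha_1}$) nor in general achievable with the sign constraints $A,B\ge 0$ that you need for monotonicity, for $(K_3)$ and for the bound $0\le \hat r'\le \tilde r'$ in $(K_4)$: since $\tilde r'=r\sqrt{r^2-\bar c^2}/\bar c$ and $\tilde r'\le Cx_1^{1/2}$ near the level $r\approx\bar c$, the second derivative $\tilde r''$ behaves like $(r^2-\bar c^2)^{-1/2}$ and can be arbitrarily large, so the $3\times 3$ nonlinear system for $(A,B,n)$ need not have an admissible solution. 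Second, and more importantly, letting $n$ vary with the iterate destroys the two places where the paper uses that $n$ is a universal constant: the uniform H\"older bound for $\hat r'$ (the constant is $C(n)$, and the exponent $\alpha\le\tfrac12$ comes from $\tilde r'\le Cx_1^{1/2}$), and the continuity of $J$ through the surgery, which in the paper is proved precisely by showing $(\tau_m,\theta_{a,m},A_m,B_m)\to(\tau,\theta_a,A,B)$ with $n$, $\theta_w$, $\rho_0$ fixed, after splitting into the three subcases $3\delta<bx_1$, $3\delta>bx_1$, $3\delta=bx_1$. The paper instead matches only the value and first derivative at $\theta_a+\tau$, giving the explicit formulas $A=\frac{na-bx_1}{(n-3)x_1^3}$, $B=\frac{bx_1-3a}{(n-3)x_1^n}$, and then chooses $\tau$ (so that $bx_1-3a\ge 0$, possibly with $3a=bx_1$ and $B=0$) and a fixed large $n=n(\rho_0,\rho_1,\theta_1,\theta_w,\delta)$ (so that $na-bx_1>0$), which makes $A,B\ge 0$ and keeps the whole construction uniform in the iteration; you would need to reproduce this (or an equivalent device) to make $J$ a continuous compact self-map.

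Two smaller corrections: the absence of oscillation of $\tilde r$ across the surgery level is not supplied by Lemma \ref{13} but simply by $\tilde r'=g\ge 0$, which follows from Lemma \ref{3} ($r\ge\bar c$ on the shock); Lemma \ref{13} is used later, in the limit $\e,\delta\to 0$. Also, the paper's criterion for surgery is $\tilde r(\theta_w)<c(\rho_0)+\delta$ (with the modification confined to $\{\tilde r\le c(\rho_0)+2\delta\}$ so that \eqref{3.2} still holds where $r\ge c(\rho_0)+2\delta$), and the patched curve is only $C^{1,\alpha}$ with $\alpha\le\tfrac12$ at the matching point, not $C^{1,1}$ as you assert.
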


\begin{proof}
 For the notational simplicity,  we suppress
the $(\e,\delta)$-dependence in the proof.

For each $r(\theta)\in \mathcal{K}^{\e,\delta}\subset
C^{1+\alpha_1}([\theta_w,\theta_1])$, using the solution $\rho$ of
the nonlinear fixed boundary problem \eqref{3.1} and
\eqref{3.4}--\eqref{3.5} given by Lemma \ref{10}, we define the map
$J$ on $\mathcal{K}$, $\tilde{r}=Jr$, as in \eqref{2.48}:
\begin{equation}\label{3.57}
\tilde{r}(\theta)=r_1+\int_{\theta_1}^{\theta}g(r(s),s,\rho(r(s),s))ds.
\end{equation} There are two cases for the approximate shock position
$\tilde{r}(\theta)$:

\smallskip
{\it Case 1: $\tilde{r}(\theta_w)\geq c(\rho_0)+\delta$}. We check
that $J$ maps $\mathcal{K}$ into itself. It is easy to check that
$\tilde{r}(\theta)\in C^{1+\gamma_V}([\theta_w,\theta_1])\cap
C^{1+1}([\theta_w,\theta_1))$, from the definition of $\tilde{r}$
and by using Lemmas \ref{7}--\ref{8}, and property ($K_1$) follows
from \eqref{3.57}. By the definition of $g$ and
$\rho(P_2)=\bar{\rho}$, $\tilde{r}'(\theta)=0$ holds, which implies
property ($K_2$). Then it suffices to show that property ($K_4$)
holds, since the upper and lower bounds of $\rho$, Lemma \ref{3},
and ($K_4$) imply ($K_3$). From the expression of
$g(r(\theta),\theta,\rho(r(\theta),\theta))$ and the upper and lower
bounds of $\rho$, we have ($K_4$).

\smallskip
{\it Case 2: $\tilde{r}(\theta_w)< c(\rho_0)+\delta$}. Since
$\tilde{r}'(\theta)>0$ for $\theta\in(\theta_w,\theta_1)$ and
$r_1=c(\rho_1)>c(\rho_0)+\delta$, there exists a unique
$\theta_a\in(\theta_w,\theta_1)$ such that $\tilde{r}(\theta_a)=
c(\rho_0)+\delta$. Now, choosing $\tau$ to be determined later such
that $\tilde{r}(\theta_a+\tau)\leq c(\rho_0)+2\delta$ and letting
$x_1=\theta_a+\tau-\theta_w$, we modify the approximate shock
position on $\theta_w\leq\theta\leq \theta_a+\tau$ by defining
$$
\hat{r}(\theta)=c(\rho_0)+\delta+A(\theta-\theta_w)^3+B(\theta-\theta_w)^n
$$
with
$$
A=\frac{1}{(n-3)x^3_1}(na-bx_1), \qquad
B=\frac{1}{(n-3)x^n_1}(bx_1-3a),
$$
where $a=\tilde{r}(\theta_a+\tau)-c(\rho_0)-\delta$ and
$b=\tilde{r}'(\theta_a+\tau)$.

Choose $\tau$ small enough such that
$$
bx_1-3a>0,
$$
and then $n$ sufficiently large such that
$$
na-bx_1>0,
$$
where $n$ depends on $\delta$, but independent of the iteration.
Next, we choose $n$ and $\tau$ precisely. In fact, it is easy to see
that
$$
|b|\leq\frac{c(\rho_1)}{c(\rho_0)}\sqrt{c^2(\rho_1)-c^2(\rho_0)},
$$
and
$$
|bx_1|\leq C(\rho_0,\rho_1,\theta_1,\theta_w)
:=\frac{c(\rho_1)}{c(\rho_0)}\sqrt{c^2(\rho_1)-c^2(\rho_0)}(\theta_1-\theta_w).
$$

\smallskip
If $3\delta\leq bx_1$, we choose $\tau$ such that $a=\delta$ and
$n_1=\frac{C(\rho_0,\rho_1,\theta_1,\theta_w)}{\delta}+1$, which
depend only on $\delta$, $\rho_0$, and $\rho_1$.

\smallskip
If $3\delta> bx_1$, letting $\tau$ small enough, we can obtain new
$a$ and $b$ satisfying $3a=bx_1$, where we choose the biggest $\tau$
smaller than the old one such that $3a=bx_1$ holds. Note that
$bx_1>0$ and $\tilde{r}(\theta_a)=c(\rho_0)+\delta$. Thus, choosing
$n_2=4$, we have $A>0$ and $B=0$.

Let $n=\max(n_1,n_2)=n(\rho_0,\rho_1,\theta_1,\theta_w,\delta)$,
which is independent of the iteration process. Thus,
$\hat{r}(\theta)$, uniquely determined, is a strictly increasing
function on $[\theta_w,\theta_a+\tau]$. Furthermore, we have
$$
0=\hat{r}'(-\frac{\pi}{2})\leq {\hat{r}}'(\theta) \leq
{\hat{r}}'(\theta_a+\tau)=\tilde{r}'(\theta_a+\tau).
$$
We define
\begin{equation*}
Jr(\theta)=
\begin{cases}\tilde{r}(\theta) \qquad\,\, &\text{for}\  \theta\in[\theta_a+\tau,\theta_1],\\[1.5mm]
{\hat{r}}(\theta) \qquad\,\, &\text{for}\
\theta\in[\theta_w,\theta_a+\tau].
\end{cases}
\end{equation*}
It is easy to show that $Jr(\theta)$,
$\theta\in[\theta_w,\theta_1]$, satisfies properties
($K_1$)--($K_4$).

First, since $\tilde{r}(\theta)\in
C^{1+\gamma_V}([\theta_a+\tau,\theta_1])$, $\hat{r}(\theta)\in
C^{\infty}([\theta_w,\theta_a+\tau])$, and
 $(Jr)'(\theta)\in C([\theta_w,\theta_1])$, we have
$$
Jr(\theta)\in
 C^{1+\gamma_V}([\theta_w,\theta_1]).
$$

Next, for
 $\theta\in[\theta_w,\theta_a+\tau]$,
 $\hat{r}'(\theta)=3A(\theta-\theta_w)^2+nB(\theta-\theta_w)^{n-1}$.
Then
\begin{eqnarray*}
&&\hat{r}'(\theta_2)-\hat{r}'(\theta_3)\\
&&= 3A(\theta_2-\theta_w)^2-3A(\theta_3-\theta_w)^2  +nB(\theta_2-\theta_w)^{n-1}-nB(\theta_3-\theta_w)^{n-1}\\
&&=3A(\theta_2-\theta_3)(\theta_2+\theta_3-2\theta_w)+nB(\theta_2-\theta_3)
 \big(\sum_{j=0}^{n-2}
 C_{n-2}^{j}(\theta_2-\theta_w)^{n-2-j}(\theta_3-\theta_w)^{j}\big).
\end{eqnarray*}
Using the fact that $\theta_2-\theta_w,\theta_3-\theta_w\leq x_1$,
 and $A,B\geq 0$, we obtain
 \begin{eqnarray*}
|\hat{r}'(\theta_2)-\hat{r}'(\theta_3)|
&\leq& |\theta_2-\theta_3|^{\alpha}\big(6Ax_1^{2-\alpha}+C(n)Bx_1^{n-1-\alpha}\big)\\
&\leq&
C(n)\big(ax_1^{-1-\alpha}+bx_1^{-\alpha}\big)|\theta_2-\theta_3|^{\alpha}.
\end{eqnarray*}
Notice that $\tilde{r}'=\frac{r}{\bar{c}}\sqrt{r^2-\bar{c}^2}$,
$r\in C^{1+\gamma_V}$, and $\theta_a+\tau$ is uniformly away from
$\theta_1$, which means that
  $\rho\in C^{1+\mu}$. We obtain
$$
\tilde{r}'\leq C(\rho_0,\rho_1,\e,\delta)x_1^{1/2},
$$
which implies
$$
|\hat{r}'(\theta_2)-\hat{r}'(\theta_3)|
 \leq C(\rho_0,\rho_1,\e,\delta)|\theta_2-\theta_3|^{\alpha}
 \qquad \mbox{if}\,\,\, \alpha\leq \frac{1}{2}.
$$
Thus
$$
\|Jr\|_{C^{1+\alpha}([-\frac{\pi}{2},\theta_1])}\leq
C(\rho_1,\rho_2,\e,\delta)
$$
if $\alpha\leq \min\{\gamma_V, \frac{1}{2}\}$, which satisfies
($K_1$)--($K_4$).

Thus, we define a map
$$
J\, :\, \mathcal{K}^{\e,\delta}\rightarrow\mathcal{K}^{\e,\delta}
$$
by
$$
\tilde{r}=Jr.
$$
Obviously, $\mathcal{K}^{\e,\delta}$ is a convex and closed subset
of the Banach space $C^{\alpha_1}$, and $J$ is compact, if
$\alpha_1<\min\{\gamma_V, \frac{1}{2}\}$. In order to use the
Schauder fixed point theorem, we need to prove that $J$ is
continuous on $\mathcal{K}^{\e,\delta}$.

Assume that $r_m,r\in\mathcal{K}^{\e,\delta}$ for $m=1,2,\cdots$,
$r_m\rightarrow r$ as $m\to \infty$, and $\rho_m$ solves the fixed
boundary problem for $r_m$ for each fixed $m$. Then, by the standard
argument as in \cite{sbe2}, $\rho_m\rightarrow\rho$, which solves
the problem for $r$. Therefore, we have
$$
g(r_m(\theta),\theta,\rho_m(r_m(\theta),\theta))
\longrightarrow(r(\theta),\theta,\rho(r(\theta),\theta)) \qquad
m\to\infty,
$$
which implies $ Jr_m\rightarrow Jr $ as $m\to \infty$ at the point
where \eqref{3.2} holds for both $r_m$ and $r$. Then
$Jr_m\rightarrow Jr$ as $m\to\infty$, if $Jr$ belongs to Case 1. For
Case 2, due to the construction, we divide it into three subcases:
$$
3\delta< bx_1; \qquad 3\delta> bx_1; \qquad 3\delta=bx_1,
$$
where $b=\tilde{r}'(\theta_a+\tau)$, $x_1=\theta_a+\tau-\theta_w$,
and $\tilde{r}(\theta_a+\tau)=c(\rho_0)+2\delta$ depend only on $r$
and $\delta$. For any case, it is easy to deduce that
$$
(\tau_m, \theta_{a,m}) \rightarrow (\tau, \theta_a), \quad (A_m,
B_m)\rightarrow (A, B) \qquad\,\, \mbox{when}\,\, m\to \infty.
$$
Then $Jr_m\rightarrow Jr$, with the fact that
$$
Jr_m=c(\rho_0)+\delta+A_m(\theta-\theta_w)^3+B_m(\theta-\theta_w)^n
$$
for $\theta<\theta_{a,m}+\tau_m$, where $n$, $\theta_w$, and
$\rho_0$ are universal constants.

Then, for any fixed $\e,\delta>0$, we obtain the existence of a
solution $(\rho^{\e,\delta},r^{\e,\delta})$ to the free boundary
problem by the standard fixed point argument. Moreover, we have
$r^{\e,\delta}\in C^{1+\alpha}([\theta_w,\theta_1])$ for $\alpha\leq
\alpha_1$. This completes the proof.
\end{proof}

\subsection{Proof of Theorem \ref{4}: Completion}

  We note that Lemma \ref{12} implies that there exists a solution
$(\rho^{\e,\delta},r^{\e,\delta})$ such that
$r^{\e,\delta}\in\mathcal{K}^{\e,\delta}$. From Lemma \ref{11} and
the interior Schauder estimate, we note that
$\|\rho^{\e,\delta}\|_{C^{2,\alpha}_{loc}}\leq C$, and
$\rho^{\e,\delta}$ satisfies property \eqref{3.21}. By Lemma
\ref{2}, we have $c^2(\rho^{\e,\delta})\geq r^2$. This completes the
proof.

\section{Proof of Theorem \ref{2.1}: Existence of Solutions}

In this section, we study the limiting solution, as the elliptic
regularization parameter $\e$ and the oblique derivative boundary
regularization parameter $\delta$ tend to $0$. We start with the
regularized solutions of problem \eqref{3.1} and
\eqref{3.3}--\eqref{3.5}, whose existence is guaranteed by Theorem
\ref{4}. Denote by $\rho^{\e,\delta}$ a sequence of the regularized
solutions of the boundary value problem.

For the solutions of the regularized problems,
we first construct a uniform lower barrier to obtain the uniform
ellipticity in any compact domain contained by $\overline{\Omega}\backslash\Gamma_{\text{\rm sonic}}$, and also away from the points
in $\Gamma_{\text{\rm shock}}$ where their distance to the sonic circle of right
state $C_0$ goes to zero as $\delta$ tends to zero.

\begin{lemma}\label{15}
There exists a positive function $\varphi$, independent of $\e$ and
$\delta$, such that
$$
c^2(\rho^{\e,\delta})-(\xi^2+\eta^2)\geq \varphi \qquad \mbox{in
$\,\, \overline{\Omega}\backslash\Gamma_{\text{\rm
sonic}}$,}
$$
and
$$
\varphi \to 0
$$
as $\text{\rm dist}((\xi,\eta),\Gamma_{\text{\rm sonic}})\rightarrow0$,
or $\max\{\,\text{\rm dist}((\xi,\eta),\Gamma_{\text{\rm shock}}),\,\,\text{\rm dist}((\xi,\eta),C_0)\,\}\rightarrow0$.
\end{lemma}

\begin{proof}
For $0<R<1$ and $X_0=(\xi_0,\eta_0)\in \tilde{\Omega}$, as in
\cite{sbe2}, let
$$
\zeta(X)=1-\frac{(\xi-\xi_0)^2+(\eta-\eta_0)^2}{R^2}\qquad\mbox{for
$B_R(X_0)\cap \Gamma_{\text{sonic}}=\emptyset$}.
$$
We define
$$
\varphi(\xi,\eta)=\delta_0\big(\zeta(X)\big)^\tau,
$$
where $\delta_0$ and $\tau$ are two positive constants to be
specified later, and $0\leq \zeta\leq 1$ independent of $\e$. Then,
piecing together these $B_{\frac{3}{4}R_{X_0}}(X_0)$, $X_0\in
\overline{\Omega}\backslash\Gamma_{\text{sonic}}$, we can obtain a
local uniform lower barrier of $c^2(\rho^{\e,
\delta})-(\xi^2+\eta^2)$. That is,
$$
c^2(\rho^{\e,\delta})-\xi^2-\eta^2\geq
\varphi=\delta_0\zeta^{\tau}\qquad\text{in}\
B_{\frac{3}{4}R_{X_0}}(X_0)\cap\overline{\Omega_{\e}},
$$
where $\delta_0$ and $\tau$ are independent of $\e$ (though they may
depend on $R$). Moreover, $\delta_0$ tends to $0$ {as
$\text{dist}((\xi,\eta),\Gamma_{\text{sonic}})\rightarrow0$,
or $\max\{\,\text{\rm dist}((\xi,\eta),\Gamma_{\text{\rm shock}}),\,\,\text{\rm dist}((\xi,\eta),C_0)\,\}\rightarrow0$, so does
$\varphi$}. See \cite{sbe2} for more details.
\end{proof}

The proof of Lemma \ref{15} also implies that we can obtain the
uniform ellipticity of \eqref{3.1}, which is independent of $\e$ in
$B_{\frac{3}{4}R_{X_0}}(X_0)\cap\overline{\Omega_{\e}}$.

The uniform lower bound of $c^2-\xi^2-\eta^2$ independent of $\e$
implies that the governing equation \eqref{3.1} is locally uniformly
elliptic, independent of $\e$ and $\delta$, which allows us to apply
the standard local compactness arguments to obtain the limit $\rho$
locally in the interior of the domain.

We first consider the behaviour of shock position $r^{\e,\delta}$,
as $\e$ and $\delta$ tend to $0$. We divide the shock position into
three cases:

\smallskip
{\it Case 1}: $c(\rho_0)<r(\theta)\leq \bar{c}(\rho_1, \rho_0)$ for
all $\theta\in[\theta_w,\theta_1)$ and
$r'(\theta)=r\sqrt{\frac{r^2-\bar{c}^2}{\bar{c}^2}}$;

\smallskip
{\it Case 2}: $r(\theta_w)=c(\rho_0)$ and $c(\rho_0)<r(\theta)\leq
\bar{c}(\rho_1,\rho_0)$,
$r'(\theta)=r\sqrt{\frac{r^2-\bar{c}^2}{\bar{c}^2}}$ for all
$\theta\in(\theta_w,\theta_1)$;

\smallskip
{\it Case 3}: There exists  $\theta_a\in(\theta_w,\theta_1)$ such
that $r(\theta)\equiv c(\rho_0)$ for $\theta\in[\theta_w,\theta_a]$,
$r(\theta)>c(\rho_0)$, and
$r'(\theta)=r\sqrt{\frac{r^2-\bar{c}^2}{\bar{c}^2}}$ for
$\theta\in(\theta_a,\theta_1)$.

\begin{lemma}\label{16}
There exist functions $r(\theta)\in C^1([\theta_w,\theta_1])$ and
$\rho\in C_{loc}^{2+\alpha}(\Omega)\cap C(\overline{\Omega})$,
satisfying one of the three cases stated above, such that
$$
r^{\e,\delta}\rightarrow r \,\,\,\,\,\mbox{in $\,\,\,\, C([\theta_w,\theta_1])$},
\qquad\,\, \rho^{\e,\delta}\rightarrow\rho\,\,\,\,\,\mbox{in \,\,\,
$C^{2+\alpha}_{loc}$},
$$
and $(\rho,r)$ is a solution of the free boundary problem
\eqref{3.1}--\eqref{3.5}.
\end{lemma}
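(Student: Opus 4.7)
The plan is to pass to the limit in the regularized solutions $(\rho^{\varepsilon,\delta}, r^{\varepsilon,\delta})$ produced by Theorem \ref{4}, extracting compactness separately for the shock curves and for the densities, then verifying that the limit satisfies \eqref{3.1}--\eqref{3.5} with $\varepsilon=\delta=0$ and falls into one of the three cases.

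First, I would handle the shock curves. By property ($K_1$), $r^{\varepsilon,\delta}(\theta_1)=r_1$, and by ($K_4$) we have the uniform bound $0\leq (r^{\varepsilon,\delta})'\leq r_1^2/\bar{c}(\rho_0)$ on $[\theta_w,\theta_1]$. Hence the family $\{r^{\varepsilon,\delta}\}$ is uniformly bounded and equi-Lipschitz, so Arzel\`a--Ascoli yields a subsequence converging in $C([\theta_w,\theta_1])$ to a non-decreasing Lipschitz function $r$, with $r(\theta_1)=r_1$ and $c(\rho_0)\leq r(\theta_w)\leq \bar{c}(\rho_1,\rho_0)$.

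Next, I would pass to the limit in the densities. By Lemma \ref{15}, for every compact set $K\subset\overline{\Omega}\setminus(\Gamma_{\text{sonic}}\cup\Gamma_{\text{sonic}}')$ there is a positive lower bound $\varphi_K>0$, independent of $\varepsilon,\delta$, for $c^2(\rho^{\varepsilon,\delta})-r^2$. Combined with the uniform $L^\infty$ bound $\bar{\rho}^{\varepsilon,\delta}\leq\rho^{\varepsilon,\delta}\leq\rho_1$ from \eqref{3.21}, this makes the regularized operator $Q^\varepsilon$ uniformly elliptic on $K$ with coefficients bounded in $C^\alpha$. Interior Schauder estimates then give uniform $C^{2+\alpha}_{\text{loc}}(\Omega)$ bounds; a further diagonal subsequence yields a limit $\rho\in C^{2+\alpha}_{\text{loc}}(\Omega)\cap C(\overline{\Omega})$. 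The regularizing term $\varepsilon\Delta\rho^{\varepsilon,\delta}\to 0$ locally uniformly, so $Q\rho=0$ in $\Omega$. Boundary Schauder estimates up to $\Gamma_{\text{sonic}}$ (Dirichlet) and up to $\Gamma_0$ (Neumann), away from the corners, let us pass to the limit in \eqref{3.4}; the corner estimates of Lemma \ref{7} guarantee continuity up to $V'$.

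On the portion of the limit shock strictly above $c(\rho_0)$, Lemma \ref{3} combined with Lemma \ref{13} gives uniform control on $r^{\varepsilon,\delta}-\bar{c}(\rho^{\varepsilon,\delta},\rho_0)\geq 0$ away from zero. Hence $g(r^{\varepsilon,\delta},\theta,\rho^{\varepsilon,\delta})$ is uniformly H\"older on the relevant subinterval, the modification clause \eqref{3.2a} becomes vacuous in the limit on that subinterval, and passing to the limit in \eqref{3.57} yields the shock ODE \eqref{3.2} for $r$ there. The oblique derivative relation \eqref{3.3}, rewritten via the coefficients in \eqref{2.9}, passes to the limit at every point where $r'>0$ and $r>c(\rho_0)$, since the coefficients $\beta_1,\beta_2$ are continuous functions of $(r,r',\rho)$ and the obliqueness parameter $\mu=-2r^2(c^2-\bar{c}^2)r'$ remains strictly negative there.

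The final step is the trichotomy. Set $\theta_*:=\inf\{\theta\in[\theta_w,\theta_1]:r(\theta)>c(\rho_0)\}$. If $\theta_*=\theta_w$ and $r(\theta_w)>c(\rho_0)$, we are in Case 1. If $\theta_*=\theta_w$ but $r(\theta_w)=c(\rho_0)$, then monotonicity places us in Case 2. If $\theta_*>\theta_w$, then $r\equiv c(\rho_0)$ on $[\theta_w,\theta_*]$ and $r>c(\rho_0)$ strictly beyond, which is Case 3 with $\theta_a=\theta_*$. The Dirichlet condition at $P_2$, $\rho(P_2)=(\bar{c}_{\rho_0})^{-1}(r(\theta_w))$, follows from \eqref{3.5} in the limit using the continuity of $(\bar{c}_{\rho_0})^{-1}$ and uniform convergence of $r^{\varepsilon,\delta}$. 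The main obstacle I expect is the passage to the limit on the modified portion of the shock, where \eqref{3.2a} is used rather than the ODE \eqref{3.2}: the coefficients $A,B$ there depend on $\delta$ and do not obviously vanish, and one must show either that this modified piece shrinks to a point (Cases 1 and 2) or that it converges to the arc $r\equiv c(\rho_0)$ on which $\bar{c}(\rho,\rho_0)=r$ holds automatically (Case 3). This is handled by examining the two sub-cases $3\delta\leq bx_1$ and $3\delta>bx_1$ in the construction of $\hat r$ from Lemma \ref{12}, using the uniform bound on $b$ and on $x_1=\theta_a+\tau-\theta_w$ to show $x_1\to 0$ in the limit unless $r(\theta_w)=c(\rho_0)$, in which case the limit of the cubic-plus-$n$th-power correction is exactly the flat arc at height $c(\rho_0)$.
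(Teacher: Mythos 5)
Your proposal covers the straightforward half of the paper's argument — uniform $C^1$ bounds on $r^{\e,\delta}$ plus Arzel\`a--Ascoli, interior compactness of $\rho^{\e,\delta}$ from Lemma \ref{15} and interior Schauder estimates, and passage to the limit in the shock ODE and the oblique condition on the part of the shock where $r>c(\rho_0)$ — but it omits the step that constitutes the bulk of the paper's proof: establishing that the limit $\rho$ is continuous up to the degenerate portion of the free boundary where $r(\theta)=c(\rho_0)$ (Cases 2 and 3), in particular at the matching point $(r(\theta_a),\theta_a)$. Near that arc none of the tools you invoke applies: Lemma \ref{15} explicitly excludes a neighborhood of $\Gamma_{\rm sonic}'$ (the circle $r=c(\rho_0)$), so there is no uniform ellipticity there; the boundary and corner estimates of Lemmas \ref{7}--\ref{8} have constants depending on $\e$ and $\delta$; and the obliqueness parameter $\mu=-2r^2(c^2-\bar c^2)r'$ tends to zero as $\rho\to\rho_0$, so the oblique-derivative structure degenerates simultaneously with the ellipticity. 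Asserting ``$\rho\in C(\overline\Omega)$'' and ``$(\rho,r)$ solves \eqref{3.1}--\eqref{3.5}'' therefore begs precisely the question the lemma is about. The paper fills this gap by a quantitative barrier construction: using the monotonicity of Lemma \ref{13} and the inequalities $\bar c(\rho^{\e,\delta},\rho_0)\le r^{\e,\delta}\le c(\rho^{\e,\delta})$ it first shows $\rho^{\e,\delta}(r^{\e,\delta}(\theta),\theta)\to\rho_0$ and $r\equiv c(\rho_0)$ on $[\theta_w,\theta_0]$, and then builds the supersolutions $\Psi_\tau=c^2(\rho_0+Cm)+A\big(c(\rho_0)+\tau-r\big)^\alpha+B(\theta-\theta_a)^2$ for the equation \eqref{4.11a} satisfied by $u=c^2(\rho)$, with $\alpha$ small, $A,B$ chosen in terms of $m$, $\delta_1(m)$, $\delta_2$, uniformly in $\e,\delta$; the comparison principle and the limit $\e,\delta\to0$ then give the modulus of continuity of $\rho$ at $(r(\theta_a),\theta_a)$ and along the flat arc. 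Nothing in your proposal substitutes for this.

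Two further points. First, the statement requires $r\in C^1([\theta_w,\theta_1])$, while Arzel\`a--Ascoli only gives you a Lipschitz limit; the paper obtains $C^1$ at the transition point by showing $r'(\theta)=r\sqrt{r^2-\bar c^2}/\bar c\to 0$ as $\theta\downarrow\theta_a$, which again relies on the convergence $\rho(r(\theta),\theta)\to\rho_0$ furnished by Lemma \ref{13}; your proposal does not address this. Second, the issue you single out as ``the main obstacle'' — the modified piece \eqref{3.2a} with the coefficients $A,B$ — is comparatively minor in the limit: once one knows $\rho\to\rho_0$ along the part of the shock touching $C_0$, that piece either shrinks or converges to the flat arc $r\equiv c(\rho_0)$, exactly as you suggest; the genuinely hard analysis is the boundary continuity of $\rho$ at the doubly degenerate arc, which your argument leaves unproven.
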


\begin{proof}
For $\e,\delta>0$, it follows from Lemma \ref{12} that
$$
r^{\e, \delta}\in C^{1+\alpha}([\theta_w,\theta_1]), \qquad \|r^{\e,
\delta}\|_{C^{1}([\theta_w,\theta_1])}\leq C,
$$
where $C$ is independent of $\e$ and $\delta$. Thus, by the
Ascoli-Arzela theorem, there exists a subsequence converging
uniformly to a function $r(\theta)$ in
$C^{\alpha}([\theta_w,\theta_1])$ as $\e,\delta\rightarrow0$ for any
$\alpha<1$. By the local ellipticity ({\it cf.} Lemma \ref{15}) and
the standard interior Schauder estimate, there exists a function
$\rho\in C^{2+\alpha}_{loc}$ such that
$\rho^{\e,\delta}\rightarrow\rho$ in any compact subset contained by
$\bar{\Omega}\backslash(\Gamma_{\text{sonic}}\cup
\Gamma_{\text{shock}})$, satisfying $Q\rho=0$ in $\Omega$.

For $(r(\theta_0),\theta_0)\in\Gamma_{\text{sock}}$ with
$r(\theta_0)>c(\rho_0)$, there exist a neighborhood of $\theta_0$
and a constant $\delta^{\star}>0$ independent of $\e$ and $\delta$
such that $r^{\e,\delta}\geq c(\rho_0)+\delta^{\star}$  for  $\e$
and $\delta$  small enough. It follows from $c(\rho^{\e,\delta})\geq
r^{\e,\delta}\geq c(\rho_0)+\delta^{\star}$ that
$$
\rho^{\e,\delta}>\rho_0+\delta^{\star}.
$$
Thus, we obtain the uniform ellipticity locally, as well as the
uniform negativity of  $\mathbf{\beta}\cdot\nnu$ locally. Hence, we
can pass the limit to obtain $\rho\in C^{1+\alpha}$ and
\begin{eqnarray*}
M\rho=0 \qquad \mbox{on $\Gamma_{\text{shock}}$ near
$(r(\theta_0),\theta_0)$}
\end{eqnarray*}
such that $r'(\theta)=\frac{r}{\bar{c}}\sqrt{r^2-\bar{c}^2}$.

\smallskip
Then the remainder is to show the case that
$(r(\theta_0),\theta_0)\in\Gamma_{\text{shock}}$ and
$r(\theta_0)=c(\rho_0)$.

First, it follows from Lemma \ref{13} that
$$
c(\rho_0)\leq r^{\e,\delta}(\theta)\leq
c(\rho^{\e,\delta}(r^{\e,\delta}(\theta),\theta))\leq
c(\rho^{\e,\delta}(r^{\e,\delta}(\theta_0),\theta_0))
$$
for $\theta\in[\theta_w,\theta_0]$, and
$$
c(\rho_0)\leq
\bar{c}(\rho^{\e,\delta}(r^{\e,\delta}(\theta_0),\theta_0),
\rho_0)\leq r^{\e,\delta}(\theta_0).
$$
Thus,
$$
\rho^{\e,\delta}(r^{\e,\delta}(\theta_0),\theta_0)\rightarrow\rho_0.
$$
Therefore, $r(\theta)\equiv c(\rho_0)$ for
$\theta\in[\theta_w,\theta_0]$.

\smallskip
Next we prove the continuity of solutions up to the boundary where
$r(\theta)=c(\rho_0)$. First, we prove that $r\in C^1$. Still from
Lemma \ref{13}, we obtain that
$$
\rho(r(\theta),\theta)\rightarrow\rho_0 \qquad\mbox{if
$\theta\rightarrow\theta_0$ from the right}.
$$
On the other hand,
$$
r'(\theta)=r(\theta)\sqrt{\frac{r^2(\theta)-\bar{c}^2}{\bar{c}^2}}\qquad
\text{for $\theta>\theta_0$},
$$
which implies that $r'(\theta)\rightarrow 0$ as
$\theta\rightarrow\theta_0$ from the right-hand side, and it holds
obviously from the left-hand side. If we define $r'(\theta_0)=0$,
then $r\in C^1$.

Note that the equation for $u=c^2(\rho)$ is
\begin{eqnarray}
Q(u)&:=&(c^2-r^2+\e)u_{rr}+\frac{c^2+\e}{r^2}u_{\theta\theta}
+\frac{c^2+(\gamma-2)(r^2-\e)}{(\gamma-1)
c^2}(u_r)^2  \label{4.11a}\\
& & +\frac{1}{(\gamma-1)
r^2}(u_{\theta})^2+\frac{c^2-2r^2+\e}{r}u_r  \nonumber \\[2mm]
& =& 0. \nonumber
\end{eqnarray}

We prove the most complicated case $\theta_0=\theta_a$ first, and
the other cases will be discussed later.

We construct a family of barrier functions $\{\Psi_{\tau}\}$ with
parameter $\tau$. For any $m>0$, there exists $\delta_1(m)>0$ such
that $r'(\theta)<m$ for $|\theta-\theta_a|<\delta_1(m)$. This
implies that
$$
|r(\theta)-r(\theta_a)|<m\delta_1(m) \qquad \mbox{for}\,\,\,\,
|\theta-\theta_a|<\delta_1(m),
$$
where $\delta_1(m)\rightarrow 0$ as $m\rightarrow 0$.

Let $m<1$ and $m\delta(m)=\frac{\tau}{2}$ ($\tau$ will be specified
later). We have
$$
\begin{array}{lll}
\rho_0\leq\rho(r(\theta),\theta)
&\leq& \rho(r(\theta_a+\delta_1(m))
,\theta_a+\delta_1(m)) \leq
(\bar{c}_{\rho_0})^{-1}(r(\theta_a+\delta_1(m)))\\[1.5mm]
 &\leq& \rho_0+\frac{Cm}{2}.
 \end{array}
$$
For $\e,\delta$ small enough, we obtain
$$
\rho_0\leq\rho^{\e,\delta}(r^{\e,\delta}(\theta),\theta)
\leq\rho_0+Cm
$$
and
$$
0\leq r^{\e,\delta}(\theta)-c(\rho_0)<\tau \qquad \mbox{for
$|\theta-\theta_a|<\delta_1(m)$},
$$
where $C$ depends only on $\gamma$ and $\rho_0$. We define
$$
\Psi_{\tau}=\Psi_{\tau}^{\e,\delta}
=c^2(\rho_0+Cm)+A(c(\rho_0)+\tau-r)^{\alpha}+B(\theta-\theta_a)^2
$$
in
$$
Q^{\e,\delta}=\{(r,\theta)\,
 :\, |r-c(\rho_0)|\leq\delta_2,|\theta-\theta_a|\leq\delta_1(m)\}\cap\Omega^{\e,\delta},
$$
where $\delta_2>\tau$ will be chosen later.

Choose
$$
B=\frac{c^2(\rho_1)-c^2(\rho_0)}{\delta_1^2(m)}, \qquad A=
A_1=\frac{c^2(\rho_1)-c^2(\rho_0)}{\delta_2^{\alpha}}.
$$
Since $\rho_0\leq\rho^{\e,\delta}\leq\rho_1$ and
$\rho^{\e,\delta}\leq\rho_0+Cm$ on
$\Gamma_{\text{shock}}^{\e,\delta}\cap\partial Q^{\e,\delta}$, we
have
$$
\Psi_{\tau}\geq c^2(\rho^{\e,\delta}) \qquad\mbox{on}\,\, \partial
Q^{\e,\delta}.
$$
Thus, we have
\begin{equation}\label{4.11}
\begin{array}{lll}
Q(\Psi^{\e,\delta}_{\tau})
&=&A\alpha(\alpha-1)\big(c^2(\rho^{\e,\delta})-r^2\big)\big(c(\rho_0)-r+\tau\big)^{\alpha-2}
+\frac{2Bc^2}{r^2}\\[2mm]
&&+\big(1-\frac{\gamma-2}{(\gamma-1) c^2}(c^2-r^2)\big)A^2\alpha^2
\big(c(\rho_0)-r+\tau\big)^{2\alpha-2}\\[2mm]
&&+\frac{4B^2}{(\gamma-1) r^2}(\theta-\theta_a)^2
 -\frac{A\alpha(c^2-2r^2)}{r}\big(c(\rho_0)-r+\tau\big)^{\alpha-1}.
\end{array}
\end{equation}

Consider \eqref{4.11} in $Q^{\e,\delta}\cap\{(r,\theta)\,:\,
c^2(\rho^{\e,\delta})-\Psi^{\e,\delta}_{\tau}\geq0\}$. Since
$$
c(\rho_{0}+Cm)\geq
c(\rho^{\e,\delta}(r^{\e,\delta}(\theta),\theta))\geq
r^{\e,\delta}(\theta)\geq r,
$$
we have
\begin{equation*}
\begin{array}{lll}c^2(\rho^{\e,\delta})-r^2+\e
&\geq& c^2(\rho^{\e,\delta})-\Psi^{\e,\delta}_{\tau}
+c^2(\rho_0+Cm)-r^2+A\big(c(\rho_0)+\tau-r\big)^{\alpha}\\[2mm]
&\geq& A\big(c(\rho_0)+\tau-r\big)^{\alpha}.
\end{array}
\end{equation*}
For $\alpha<1$, \eqref{4.11} implies
\begin{equation*}
\begin{array}{lll}
Q(\Psi^{\e,\delta}_{\tau}) &\leq
&A^2\alpha\Big(\big(2-\frac{\gamma-2}{(\gamma-1)
c^2}(c^2-r^2)\big)\alpha-1\Big)
\big(c(\rho_1)-r+\tau\big)^{2\alpha-2}\\[2mm]
&&-\frac{c^2-2r^2}{r}A\alpha\big(c(\rho_1)-r+\tau\big)^{\alpha-1}
  +\frac{4}{(\gamma-1) r^2}B^2(\theta-\theta_a)^2+2B\frac{c^2}{r^2}.
\end{array}
\end{equation*}

Moreover, let
$$
A>A_1, \qquad B=\frac{C(\rho_0,\rho_1)}{\delta_1^2(m)}.
$$
If $\alpha<\frac{1}{2+C(\rho_0,\rho_1,\gamma)}$ and $\delta_2+\tau$
is small enough, we have
$$
\begin{array}{lll}
Q(\Psi^{\e,\delta}_{\tau})&\leq&
C(\rho_0,\rho_1)\big((2+C(\rho_0,\rho_1,\gamma))\alpha-1\big)
A^2\big(c(\rho_1)+\tau-r\big)^{2\alpha-2}\\[1.5mm]
&&+\frac{C(\rho_1,\rho_2)}{\delta_1^2(m)}.
\end{array}
$$
Then there exists a constant $A_2(\delta_2,m,\rho_0,\rho_1)$ such
that
$$
Q(\Psi^{\e,\delta}_{\tau})\leq 0 \qquad \mbox{for}\,\,\, A>A_2.
$$
In fact, if $r<c(\rho_0)$, we choose $\delta_2=\sqrt{m}\delta_1(m)$
to obtain
$$
c(\rho_0)-r+\tau\leq 2\sqrt{m}\delta_1(m),
$$
and let
$$
A_2^{(1)}=\frac{C(\rho_0,\rho_1,\alpha)m^{\frac{1-\alpha}{2}}}{\delta_1^{\alpha}(m)},
\qquad
A_1=\frac{C(\rho_0,\rho_1)}{m^{\frac{\alpha}{2}}\delta^{\alpha}_1(m)}.
$$
If $r\geq c(\rho_0)$,
$$
c(\rho_0)+\tau-r\leq\tau,
$$
and we let
$$
A_2^{(2)}=\frac{C(\rho_0,\rho_1,\alpha)m^{1-\alpha}}{\delta_1^{\alpha}(m)}.
$$
Set $ A=\max\{A_1,A^{(1)}_2,A^{(2)}_2\}. $ Then
$\rho^{\e,\delta}\leq\Psi^{\e,\delta}_{\tau}$ in $Q^{\e,\delta}$.
Passing to the limits $\delta,\e\rightarrow0$, we obtain
$$
\rho\leq\Psi_{\tau} \qquad\mbox{in the domain}\,\,\,
Q(m,\sqrt{m}\delta_1(m)):=\cap_{\delta,\e>0}Q^{\e,\delta}.
$$

With these barrier functions, we can show that $\rho$ is continuous
at $(r(\theta_a),\theta_a)$. In fact, for every $\e_1>0$, there
exists $m>0$ such that
$$
c^2(\rho_0+Cm)-c^2(\rho_0)<\frac{\e_1}{3}.
$$
For this $m$, we can choose $A$, $B$, and $\tau$ such that
$$
c^2(\rho)\leq\Psi_{\tau}\leq\frac{\e_1}{3}+c^2(\rho_1)
+A\big(c(\rho_0)-r+\tau\big)^{\alpha}+B(\theta-\theta_a)^2.
$$
Choose the neighborhood of $(r(\theta_a),\theta_a)$ small enough so
that
$$
A(c(\rho_0)-r+\tau)^{\alpha}\leq A(2\tau)^{\alpha} \leq
C(\rho_0,\rho_1,\alpha)m^{\frac{\alpha}{2}}.
$$
Then, choosing $m$ small again, we have
$$
c^2(\rho)\leq\frac{2\e_1}{3}+c^2(\rho_0)+B(\theta-\theta_a)^2.
$$
Finally, we choose a small neighborhood such that
$$
c^2(\rho_0)\leq c^2(\rho)\leq\e_1+c^2(\rho_0).
$$
Thus, we obtain our claim that $\rho$ is  continuous at
$(r(\theta_a),\theta_a)$, that is, the results hold for this case.

\smallskip
As for the case $\theta\in[\theta_w,\theta_a)$, we can choose
arbitrary $\tau>0$, which is independent of the neighborhood of
$\theta$. This fact makes the similar proof of this case much easier
for all sufficiently small $\e$ and $\delta$, and we omit the
details here.
\end{proof}

Next, we discuss the wave strength at the sonic circle $r\equiv
c(\rho_0)$ and conclude that Case 3 in Lemma \ref{16} does not
actually occur.

\begin{lemma}\label{5}
Let $r(\theta)$ be monotone increasing in $\theta$ on
$\Gamma_{\text{\rm shock}}$ and $\rho>\rho_0$ in the subsonic
region. Then $r(\theta)>c(\rho_0)$ for $\theta_w<\theta\leq
\theta_1$.
\end{lemma}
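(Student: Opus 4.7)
The plan is to argue by contradiction. Suppose $r(\theta_\ast)=c(\rho_0)$ for some $\theta_\ast\in(\theta_w,\theta_1]$; since $r(\theta_1)=c(\rho_1)>c(\rho_0)$, one has $\theta_\ast<\theta_1$. Taking $\e,\delta\to 0$ in Lemma \ref{3} gives $\bar c(\rho,\rho_0)\le r$ on $\Gamma_{\text{shock}}$, while the strict convexity of $p$ combined with $\rho\ge\rho_0$ yields $\bar c(\rho,\rho_0)\ge c(\rho_0)$, with equality if and only if $\rho=\rho_0$. Combining these bounds with the monotonicity of $r$ and $r(\theta_\ast)=c(\rho_0)$ forces $r(\theta)\equiv c(\rho_0)$ and $\rho(r(\theta),\theta)\equiv\rho_0$ on the arc $[\theta_w,\theta_\ast]$; that is, we are in Case 3 of Lemma \ref{16}, so that the shock locally coincides with the sonic circle $C_0=\{r=c(\rho_0)\}$ of state $(0)$ with trivial jump.

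Next I would fix an interior point $X_0=(c(\rho_0),\theta_0)$ of this arc with $\theta_0\in(\theta_w,\theta_\ast)$ and extract a pointwise contradiction there. Because $\rho\equiv\rho_0$ along the arc, $\rho_\theta(X_0)=\rho_{\theta\theta}(X_0)=0$, and at $X_0$ also $c^2-r^2=0$ and $c^2/r^2=1$. Expanding the operator $Q$ in \eqref{2.40a} at $X_0$, every term carrying the vanishing factor $c^2-r^2$ or a vanishing tangential derivative drops out, and the identity $Q\rho(X_0)=0$ collapses to
\[
\rho_r(X_0)\bigl[(\gamma-1)\rho_0^{\gamma-2}\rho_r(X_0)-c(\rho_0)\bigr]=0,
\]
so either $\rho_r(X_0)=0$ or $\rho_r(X_0)=c(\rho_0)/\bigl[(\gamma-1)\rho_0^{\gamma-2}\bigr]>0$. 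On the other hand, the hypothesis $\rho>\rho_0$ in the subsonic region together with $\rho(X_0)=\rho_0$ shows that $X_0$ is a strict boundary minimum of $\rho$, and since $\Omega$ lies locally in $\{r<c(\rho_0)\}$ with outward unit normal $\hat r$ at $X_0$, Hopf's boundary point lemma applied to $Q\rho=0$ forces $\rho_r(X_0)<0$, incompatible with either algebraic alternative. This contradiction rules out $r(\theta_\ast)=c(\rho_0)$.

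The main technical obstacle is the rigorous application of Hopf's lemma at $X_0$, since the coefficient $c^2-r^2$ of $\rho_{rr}$ in $Q$ vanishes there. The plan to handle this is to construct an explicit exponential barrier in a small interior half-disk at $X_0$, exploiting the sharp local ellipticity $c^2-r^2\approx(\gamma-1)\rho_0^{\gamma-2}(\rho-\rho_0)+2c(\rho_0)(c(\rho_0)-r)$ valid in $\Omega$ near the arc (which remains strictly positive at any fixed interior distance), in the spirit of the barrier construction underlying Lemma \ref{15}. The pointwise evaluation of $Q\rho$ at $X_0$ is then legitimate as a one-sided limit from $\Omega$, relying on the $C^{1,\alpha}$ regularity of $\rho$ up to $\Gamma_{\text{shock}}$ away from $P_1$ provided by Lemma \ref{8}.
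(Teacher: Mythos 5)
Your reduction step is fine and matches the paper: assuming $r(\theta_\ast)=c(\rho_0)$, monotonicity plus $\bar c(\rho,\rho_0)\le r$ forces $r\equiv c_0:=c(\rho_0)$ and $\rho\to\rho_0$ along the arc, which is exactly the configuration the paper sets out to exclude. The gap is in your central step, the pointwise evaluation of $Q\rho=0$ at an interior point $X_0$ of the degenerate arc. That evaluation presupposes that $\rho_r$, $\rho_{\theta\theta}$ exist (finitely) at $X_0$ and that the term $(c^2-r^2)\rho_{rr}$ can be dropped because its coefficient vanishes. Neither is available, and in fact both fail in the hypothetical configuration you are trying to refute: the paper's own barrier construction inside this very lemma (the functions $w_1$ and $w_3$) shows that if the shock sat on $C_0$ then necessarily $a(c_0-r)^{1/2}\le \rho-\rho_0\le A(c_0-r)^{1/2}$ near the arc, so $\rho_r\sim -(c_0-r)^{-1/2}\to-\infty$ and $\rho_{rr}\sim(c_0-r)^{-3/2}$, while $c^2-r^2\sim(c_0-r)^{1/2}$. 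Hence the discarded term $(c^2-r^2)\rho_{rr}$ is of the same order $(c_0-r)^{-1}$ as the retained quadratic term $(\gamma-1)\rho^{\gamma-2}\rho_r^2$, and one checks from \eqref{4.11a} that for the model profile $c^2-c_0^2=a(c_0-r)^{1/2}$ these two singular terms cancel at leading order. So the collapsed identity $\rho_r[(\gamma-1)\rho_0^{\gamma-2}\rho_r-c_0]=0$ has no content, and no contradiction with a Hopf-type sign condition can be extracted this way; indeed the square-root profile is perfectly compatible with the interior equation, which is precisely why a cruder argument is insufficient here.

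The supporting regularity you invoke is also not available where you need it: the $C^{1,\mu}$ estimate of Lemma \ref{8} is for the regularized problem, with constants depending on $\varepsilon,\delta$, and it degenerates in the limit exactly on this arc, where the obliqueness constant $\mu=\beta\cdot(1,-r')$ vanishes (here $r'=0$ and $c^2-\bar c^2\to0$). Likewise Hopf's lemma cannot be applied at $X_0$ as stated, since the ellipticity ratio of $Q$ degenerates at that boundary point, and an "exponential barrier" substitute would have to beat the exact square-root balance noted above, which it cannot. This is why the paper argues differently: it first pins down the two-sided $C^{1/2}$ behavior with explicit barriers, then rewrites the equation for $v=c^2-c_0^2$ in divergence form, rescales by $u(S,T)=S^{-1/5}v(S^{-12/5},y_0+S^{-14/5}T)$ to obtain a uniformly elliptic equation, and applies the Harnack inequality between the scales $c_0-r\approx x_0$ and $c_0-r\approx x_0^{5/4}$; the mismatched exponents $x_0^{7/5}$ versus $x_0^{7/4}$ give $x_0^{-7/20}\le C$, which is false for small $x_0$. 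If you want to complete your proof, you would need to replace the pointwise identity at $X_0$ by an argument of this quantitative, multi-scale type (or otherwise exploit more than the leading-order balance), since the leading-order local analysis alone does not distinguish the degenerate configuration from a genuine solution.
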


\begin{proof}We divide the proof into five steps.

\smallskip
1. We show our claim by contradiction. More precisely, if there
exists $\bar{\theta}$ such that $r(\bar{\theta})=c(\rho_0):=c_0$.
Then, using the monotonicity of $r(\theta)$,
$$
r(\theta)\equiv c_0\qquad\mbox{for $\theta_w\leq\theta\leq
\bar{\theta}$}.
$$

2. For $\theta_0\in[\theta_w,\bar{\theta}]$, we define
$$
w_1=c_0^{2}+A_1(c_0-r)^{\frac{1}{2}}-B_1(c_0-r)^{\beta_1}+D_1(\theta-\theta_0)^2,
$$
where  $A_1, B_1,D_1>0$ and $\frac{1}{2}<\beta_1<1$, which will be
specified later to prove that $\rho\in C^{\frac{1}{2}}$ near this
boundary point.

\smallskip
Using \eqref{4.11a} with the coefficient of $u_{rr}$ replaced by
$u-r^2$, we have
\begin{equation}
\begin{array}{lll}
\hat{Q}(w_1)&=&\Big(-(\beta^2_1-\frac{1}{4})A_1B_1(c_0-r)^{\beta_1-\frac{3}{2}}+O_1\Big)\\[2mm]
&& +\Big(\beta_1(2\beta_1-1)B^2_1
(c_0-r)^{2\beta_1-2}+O_2\Big)\\[2mm]
&& -\frac{(\gamma-2)}{4(\gamma-1) c^2}A^2_1(c^2-r^2)(c_0-r)^{-1}\\[2mm]
&&+\Big(-\frac{A_1D_1}{4}(c_0-r)^{-\frac{3}{2}}(\theta-\theta_0)^2+O_4\Big),
\end{array}
\end{equation}
where
\begin{equation*}
\begin{array}{lll}
O_1&=&-\frac{A_1}{2}c_0(c_0-r)^{-\frac{1}{2}}-2c_0\beta_1(\beta_1-1)B_1(c_0-r)^{\beta_1-1}
+\frac{A_1}{4}(c_0-r)^{\frac{1}{2}}\\[1.5mm]
&&+\beta_1(\beta_1-1)B_1(c_0-r)^{\beta_1}+\frac{2c^2}{r^2}D_1+\frac{A_1r}{2}(c_0-r)^{-\frac{1}{2}}
-\beta_1 B_1r(c_0-r)^{\beta_1-1}\\[1.5mm]
&&+\frac{(\gamma-2)\beta_1 A_1B_1}{(\gamma-1) c^2}(c^2-r^2)(c_0-r)^{\beta_1-\frac{3}{2}}-\frac{A_1}{2r}(c^2-r^2)(c_0-r)^{\frac{1}{2}}\\[1.5mm]
&&+\frac{\beta_1 B_1}{r}(c^2-r^2)(c_0-r)^{\beta_1-1},\\[1.5mm]
O_2&=&-\frac{(\gamma-2)\beta_1^2B^2_1}{(\gamma-1) c^2}(c^2-r^2)(c_0-r)^{2\beta_1-2},\\[2mm]
O_4&=&\beta_1(\beta_1-1)B_1D_1(c_0-r)^{\beta_1-2}(\theta-\theta_0)^2+\frac{4D_1}{(\gamma-1)
r^2}(\theta-\theta_0)^2.
\end{array}
\end{equation*}

Notice that there exists $0<\alpha<\frac{1}{2}$ such that
$c^2-r^2\leq (c_0-r)^{\alpha}$ for $c_0-r>0$ small. Thus,
$$
\Big|\frac{(\gamma-2)}{4(\gamma-1)
c^2}A^2_1(c^2-r^2)(c_0-r)^{-1}\Big|\leq
C(\rho_0,\rho_1)A^2_1(c_0-r)^{\alpha-1}.
$$
We can choose a proper constant $\alpha$ such that
$\beta_1-\frac{3}{2}<\alpha-1$, {\it i.e.},
$\alpha>\beta_1-\frac{1}{2}$.

On one hand, let $c_0-r>0$ be small enough so that
$$
 (\beta^2_1-\frac{1}{4})A_1B_1(c_0-r)^{\beta_1-\frac{3}{2}}
 >3C(\rho_0,\rho_1)A^2_1(c_0-r)^{\alpha-1},
$$
which implies \begin{equation}\label{6.3}
B_1>\frac{3C(\rho_0,\rho_1)}{\beta^2_1-\frac{1}{4}}A_1(c_0-r)^{\alpha-\beta_1+\frac{1}{2}}:=
A_1C(\rho_0,\rho_1,\beta_1)(c_0-r)^{\alpha-\beta_1+\frac{1}{2}}.
\end{equation} On the other hand, if $c_0-r>0$ is sufficiently small, we have
$$
(\beta^2_1-\frac{1}{4})A_1B_1(c_0-r)^{\beta_1-\frac{3}{2}}
>3\beta_1(2\beta_1-1) B^2_1(c_0-r)^{2\beta_1-2},
$$
which implies \begin{equation}\label{6.4}
A>\frac{(2\beta^2_1-\beta_1)B_1}{\beta^2_1-\frac{1}{4}}(c_0-r)^{\beta_1-\frac{1}{2}}
:= C(\beta_1)B_1(c_0-r)^{\beta_1-\frac{1}{2}}. \end{equation}
Moreover, we have
$$
C(\rho_0,\rho_1,\beta_1)(c_0-r)^{\alpha-\beta_1+\frac{1}{2}}
<C(\beta_1)(c_0-r)^{\frac{1}{2}-\beta_1}
$$
when $r\in [\bar{r}, c_0]$, and $\bar{r}$ is close to $c_0$.

Choose proper constants $A_1$ and $D_1$ such that
$$
w_1>c^2_0+\frac{1}{2}A_1(c_0-r)^{\frac{1}{2}}+D_1(\theta-\theta_0)^2>c^2
$$
at the boundary of a relatively neighborhood $N_1$ of
$(c_0,\theta_0)$ to $\Omega$. Choose $B_1$ sufficiently small such
that
$$
\hat{Q}(w_1)<0
$$
and
$$
C(\rho_0,\rho_1,\beta_1)(c_0-r)^{\alpha-\beta+\frac{1}{2}}<\frac{B_1}{A_1}<\min\Big\{C(\beta),
\frac{1}{4(\beta_1-\beta^2_1)}\Big\}(c_0-r)^{\frac{1}{2}-\beta_1}
\qquad \mbox{in $N_1$}.
$$
This implies that \eqref{6.3} and \eqref{6.4} hold.

Obviously, we have
$$
\partial_{rr}w_1<0 \qquad\mbox{in}\,\, N_1
$$
if \eqref{6.3} and \eqref{6.4} hold. If  $S_1=\{(r,\theta)\in N_1\,
:\, c^2>w_1\}\neq\emptyset$, we have $Q(w_1)\leq\hat{Q}(w_1)<0$ in
$S_1$. Thus,
$$
0<Qu-Q(w_1).
$$
Using the maximum principle, $ u\leq w_1, $ which contradicts with
$c^2>w_1$. Thus
$$
c^2\leq w_1 \qquad\mbox{in}\,\, N_1.
$$

3. We define
$$
w_2=c_0^{2}+A_2(c_0-r)^{\frac{1}{2}}+B_2(c_0-r)^{\beta_2}-D_2(\theta-\theta_0)^2,
$$
where  $A_2,B_2,D_2>0$ and $\frac{1}{2}<\beta_2<1$, all of which
will be specified later to prove that $C^{\frac{1}{2}}$ is optimal.
Through a simple algebraic calculation, we have
\begin{equation}
\begin{array}{lll}
\hat{Q}(w_2)&=&\big((\beta^2_2-\frac{1}{4})A_2B_2(c_0-r)^{\beta_2-\frac{3}{2}}+\overline{O}_1\big)
+\big(\beta_2(2\beta_2-1)B^2_2(c_0-r)^{2\beta_2-2}+\overline{O}_2\big)\\[2mm]
&&+\big(-\beta_2(\beta_2-1)B_2D_2(c_0-r)^{\beta_2-2}(\theta-\theta_0)^2+\overline{O}_3\big)\\[2mm]
&& +\big(\beta_2B_2r(c_0-r)^{\beta_2-1}+\overline{O}_4\big)
 +\frac{1}{4}A_2D_2(c_0-r)^{-\frac{3}{2}}(\theta-\theta_0)^2\\[2mm]
&& +2\beta_2(\beta_2-1)B_2c_0(c_0-r)^{\beta_2-1},
\end{array}
\end{equation}
where
\begin{eqnarray*}
\overline{O}_1&=&-\frac{(\gamma-2)}{4(\gamma-1)
c^2}A^2_2(c^2-r^2)(c_0-r)^{-1}
-\frac{A_2}{2}c_0(c_0-r)^{-\frac{1}{2}}+\frac{A_2}{4}(c_0-r)^{\frac{1}{2}}
\\
&&-\beta_2(\beta_2-1)B_2(c_0-r)^{\beta_2}-\frac{2c^2}{r^2}D_2+\frac{A_2r}{2}(c_0-r)^{-\frac{1}{2}}
-\frac{A_2}{2r}(c^2-r^2)(c_0-r)^{\frac{1}{2}}
\\
&&-\frac{(\gamma-2)\beta_2 A_2B_2}{(\gamma-1) c^2}(c^2-r^2)(c_0-r)^{\beta_2-\frac{3}{2}},\\
\overline{O}_2&=&-\frac{(\gamma-2)\beta^2_2B^2_2}{(\gamma-1) c^2}(c^2-r^2)(c_0-r)^{2\beta_2-2},\\
\overline{O}_3&=&-\frac{4D_2}{(\gamma-1) r^2}(\theta-\theta_0)^2,\\
\overline{O}_4&=&-\frac{\beta_2 B_2}{r}(c^2-r^2)(c_0-r)^{\beta_2-1}.
\end{eqnarray*}

Let $D_2$ be large enough such that $c^2>w_2$ for some
$\theta=\theta_a,\theta_b$. We choose $\tilde{r}<c_0$ such that
\begin{equation*}
\begin{array}{lll}
c^2&>&c^2_0+2A_2(c_0-\tilde{r})^{\frac{1}{2}}-D_2(\theta-\theta_0)^2\\[1.5mm]
&\geq&c_0+A_2(c_0-\tilde{r})^{\frac{1}{2}}+B_2(c_0-\tilde{r})^{\beta_2}-D_2(\theta-\theta_0)^2.
\end{array}
\end{equation*}
The second inequality holds, provided that
$\frac{B_2}{A_2}\leq(c_0-{\tilde{r}})^{\frac{1}{2}-\beta_2}$.
Choosing $\beta_2>\frac{7}{8}$, we have
$$
\frac{1}{2}\beta_2B_2r(c_0-r)^{\beta_2-1}+2\beta_2(\beta_2-1)B_2c_0(c_0-r)^{\beta_2-1}
\leq0 \qquad\mbox{for}\,\,\, \frac{c_0}{2}<r<c_0,
$$
and
$$
\hat{Q}(w_2)>0.
$$
Then, if  $S_2=\{(r,\theta)\in N_1\, :\, c^2<w_2\}\neq\emptyset$, we
have
$$
Q(w_2)\geq\hat{Q}(w_2)>0 \qquad\mbox{in}\,\,\, S_2.
$$
Thus, $ Qu-Q(w_2)<0. $ Using the maximum principle, $c\leq w_2$,
which contradicts with $c^2<w_1$. Thus
$$
c^2\geq w_2\qquad\mbox{in}\,\,\, N_2.
$$

4. We now show that
$$
c^2>c_0^{2}+A_3(c_0-r)^{\frac{1}{2}}+B_3(c_0-r)^{\beta_3}=:w_3
$$
in a relative neighborhood of $(r_0,\theta_0)$, where $A_3$ and
$B_3$ are positive constants to be specified later, so that the
$C^{\frac{1}{2}}$--regularity is optimal.

Since $c^2\geq w_2$, we can  choose $\bar{\theta}_a$ and
$\bar{\theta}_b$ such that
$$
c^2\geq
c^2_0+A_2(\overline{\theta}_a,\overline{\theta}_a)(c_0-r)^{\frac{1}{2}}+B_2(c_0-r)^{\beta_2}
\qquad \text{for}\ N_3\subset N_2.
$$
Thus, there exist positive constants $A_3$, $B_3$, and $\beta_3$
such that
$$
w_3\leq c^2.
$$
It is easy to see that
\begin{eqnarray*}
\hat{Q}(w_3)
&=&\big((\beta^2_3-\frac{1}{4})A_3B_3(c_0-r)^{\beta_3-\frac{3}{2}}+{\tilde{O}}_1\big)
+\big(\beta_3(2\beta_3-1)B^2_3
(c_0-r)^{2\beta_3-2}+{\tilde{O}}_2\big)\\
&&+\big(\beta_3B_3r(c_0-r)^{\beta_3-1}+{\tilde{O}}_4\big)
+2\beta_3(\beta_3-1)B_3c_0(c_0-r)^{\beta_3-1},
\end{eqnarray*}
where
\begin{eqnarray*}
{\tilde{O}}_1&=&-\frac{(\gamma-2)}{4(\gamma-1)
c^2}A^2_3(c^2-r^2)(c_0-r)^{-1}-\frac{A_3}{2}c_0(c_0-r)^{-\frac{1}{2}}+\frac{A_3}{4}(c_0-r)^{\frac{1}{2}}
\\
&&+\frac{A_3r}{2}(c_0-r)^{-\frac{1}{2}}
-\frac{A_3}{2r}(c^2-r^2)(c_0-r)^{\frac{1}{2}}
\\
&&-\frac{(\gamma-2)\beta_3 A_3B_3}{(\gamma-1) c^2}(c^2-r^2)(c_0-r)^{\beta_3-\frac{3}{2}},\\
{\tilde{O}}_2&=&\beta_3(\beta_3-1)B_3(c_0-r)^{\beta_3}
-\frac{(\gamma-2)\beta^2_3B^2_3}{(\gamma-1) c^2}(c^2-r^2)(c_0-r)^{2\beta_3-2},\\
{\tilde{O}}_3&=&-\frac{\beta_3 B_3}{r}(c^2-r^2)(c_0-r)^{\beta_3-1}.
\end{eqnarray*}
Similarly, we can show that $c^2\geq w_3$ in $N_3$.

Thus, $\frac{1}{2}A_3(c_0-r)^{\frac{1}{2}}\leq c^2-c^2_0\leq 2A_1
(c_0-r)^{\frac{1}{2}}$ in $N_1\cap N_3$. This implies
$$
a(c_0-r)^{\frac{1}{2}}\leq v:=\rho-\rho_0\leq
A(c_0-r)^{\frac{1}{2}}\qquad\mbox{in $N_1\cap N_3$}
$$
for some constants $a$ and $A$, so the optimal regularity of $\rho$
is $C^{\frac{1}{2}}$ near the sonic circle.

\smallskip
5. We introduce the coordinates: $ (x, y)=(c_{0}-r,\
\theta-\theta_{w}) $ and set $ v=c^2-c^2_0. $ Thus, rewriting the
equation for $c^2$ in the divergence form, we have \begin{equation}
Qv=\big(a_{11}(v+2c_0x-x^2)v_{x}\big)_{x}+b_1v_{x}+(a_{22}v_{y})_{y}=0,
\end{equation} where
$a_{11}=\frac{c^{\frac{2(2-\gamma)}{\gamma-1}}}{\gamma-1}$,
$a_{22}=\frac{c^{\frac{2}{\gamma-1}}}{(\gamma-1) r^2}$, and
$b_1=\frac{c^{\frac{2}{\gamma-1}}}{(\gamma-1) r}$.

Scale $v$ in $N_1\cap N_3$ by defining \begin{equation}
u(S,T)=\frac{1}{S^{\frac{1}{5}}}v(S^{-\frac{12}{5}},y_{0}+S^{-\frac{14}{5}}T),
\end{equation} for $(S^{-\frac{12}{5}},y_{0}+S^{-\frac{14}{5}}T)\in N_1\cap
N_3$. Then $u$ satisfies the following governing equation:
\begin{equation}\label{4.9} Qu=(\tilde{a}_{11}u_{S})_S+(\tilde{a}_{12}u_{T})_S
+(\tilde{a}_{21}u_{S})_T+(\tilde{a}_{22}u_{T})_T+(\tilde{b}_{2}u)_T
+\tilde{c}_{1}u_{S}+\tilde{c}_{2}u_{T}+\tilde{d}_{2}u=0,
\end{equation}
where
\begin{eqnarray*}
\tilde{a}_{11}&=&a_{11}\big(S^{\frac{7}{5}}u+2c_0S^{-\frac{6}{5}}-S^{-\frac{18}{5}}\big),\\
\tilde{a}_{12}&=&\frac{14T}{5S}a_{11}\big(S^{\frac{7}{5}}u+2c_0S^{-\frac{6}{5}}-S^{-\frac{18}{5}}\big),\\
\tilde{a}_{21}&=&\frac{14T}{5S}a_{11}\big(S^{\frac{7}{5}}u+2c_0S^{-\frac{6}{5}}-S^{-\frac{18}{5}}\big),\\
\tilde{a}_{22}&=&\frac{144}{25}a_{22}+\frac{189T^2}{25S^2}a_{11}
  \big(S^{\frac{7}{5}}u+2c_0S^{-\frac{6}{5}}-S^{-\frac{18}{5}}\big),\\
\tilde{b}_{2}&=&\frac{14T}{25S^2}a_{11}\big(S^{\frac{7}{5}}u+2c_0S^{-\frac{6}{5}}-S^{-\frac{18}{5}}\big)
   =\tilde{b}_{22}TS^{-2},\\
\tilde{c}_{1}&=&\frac{S^{\frac{7}{5}}u(2-\gamma)S^{-\frac{11}{5}}}{5(\gamma-1)^2
}c^{\frac{2(3-2\gamma)}{\gamma-1}}\big(S^{\frac{7}{5}}u+2c_0S^{-\frac{6}{5}}-S^{-\frac{18}{5}}\big)\\
&& -\frac{a_{11}(4c_0-S^{-\frac{12}{5}})}{5S^{\frac{11}{5}}}-\frac{12b_1}{5S^{\frac{11}{5}}}=\tilde{c}_{11}S^{-\frac{11}{5}},\\
\tilde{c}_{2}&=&\frac{168T}{25S^2}a_{11}\big(S^{\frac{7}{5}}u+2c_0S^{-\frac{6}{5}}-S^{-\frac{18}{5}}\big)
-\frac{189Ta_{11}}{25S^{2}}-\frac{168b_1T}{25S^{\frac{16}{5}}}=\tilde{c}_{22}S^{-2}T,\\
\tilde{d}&=&\frac{S^{\frac{7}{5}}u(2-\gamma)S^{-\frac{16}{5}}}{25(\gamma-1)^2
}c^{\frac{2(3-2\gamma)}{\gamma-1}}\big(S^{\frac{7}{5}}u+2c_0S^{-\frac{6}{5}}-S^{-\frac{18}{5}}\big)\\
&&-\frac{13a_{11}(2c_0-S^{-\frac{12}{5}})}{25S^{\frac{16}{5}}}
-\frac{12a_{11}S^{\frac{7}{5}}u(c_0+\frac{6}{5}S^{-\frac{6}{5}})}{25S^{\frac{23}{5}}}-\frac{12b_1}{25S^{\frac{16}{5}}}
=\tilde{d}_{1}S^{-\frac{16}{5}}.
\end{eqnarray*}
From the optimal continuity,
$$
0<a\leq S^{\frac{7}{5}}u\leq A,
$$
we have
$$
0<C^{-1}\leq
\lambda_1,\lambda_2,\tilde{b}_{22},\tilde{c}_{11},\tilde{c}_{22},\tilde{d}_1\leq
C
$$
if $S^{-1}$ and $T$ are sufficiently small. Here $\lambda_1$ and
$\lambda_2$ are the eigenvalues of the matrix
$(\tilde{a}_{ij})_{2\times 2}$, so the equation is uniformly
elliptic for $u$ in the $(S,T)$-coordinates.

Let $x_0^{-1}<S\leq x^{-\frac{5}{4}}_0$  with $x_0$ small enough.
Then, using Theorem 8.20 in \cite{gt}, we have
\begin{eqnarray*}
ax^{\frac{7}{5}}_0 \leq u(x_0^{-1},0) &\leq&
\sup\limits_{x_0^{-1}\leq
S\leq x^{-{5}/{4}}_0}u(S,T)\\
&\leq&  C\inf\limits_{x_0^{-1}\leq S\leq x^{-{5}/{4}}_0}u(S,T)\leq
Cu(x^{-\frac{5}{4}}_0,0)\leq CAx^{\frac{7}{4}}_0,
\end{eqnarray*}
where $C\leq C(n)^{(\frac{\Lambda}{\lambda}+\nu R)}$ in \cite{gt} is
independent of $x_0$, since
$(\Lambda,\lambda)=(\lambda_1,\lambda_2)$,
$R=x_0^{-\frac{5}{4}}-x_0^{-1}\leq x_0^{-\frac{5}{4}}$, and
$\nu:=\max\limits_{x_0^{-1}\leq S\leq x^{-5/4}_0}
\{\tilde{b}_{2},\tilde{c}_{1},\tilde{c}_{2},\sqrt{\tilde{d}}\}\leq
Cx_0^{\frac{8}{5}}$. This implies that $x^{-\frac{7}{20}}_0\leq C$,
which is a contradiction if $x_0$ is sufficiently small. This
completes the proof.

\end{proof}

Next, we consider $\Gamma_{\text{shock}}$ in the
$(\xi,\eta)$--coordinates to obtain finer properties.

\begin{lemma} \label{convex}
For the free boundary $\Gamma_{\text{\rm shock}}=\{(\xi,\eta(\xi))\,
:\, \xi_w<\xi<\xi_1\}$ determined by \eqref{3.1}--\eqref{3.5},
$$
\eta(\xi)\in C^{2}([\xi_w,\xi_1));
$$
moreover, $\eta(\xi)$ is strictly convex for $\xi\in [\xi_w,
\xi_1)$.
\end{lemma}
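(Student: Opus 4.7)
The plan is to split the assertion into (a) the $C^{2}$ regularity of $\eta(\xi)$ and (b) strict convexity, addressing each separately.

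For (a), I would bootstrap from the regularity already established in Lemma \ref{16}, namely $r\in C^{1+\alpha}([\theta_w,\theta_1])$ and $\rho\in C^{2+\alpha}_{\mathrm{loc}}(\Omega)\cap C(\overline{\Omega})$. On any compact subinterval of $[\theta_w,\theta_1)$, i.e.\ away from the sonic point $P_1$, Lemma \ref{15} provides local uniform ellipticity, while Lemma \ref{5} together with Lemma \ref{3} and $\rho>\rho_0$ force $r>c(\rho_0)\ge \bar c$ strictly on $\Gamma_{\mathrm{shock}}\setminus\{P_2\}$, so the obliqueness coefficient $\mu=-2r^{2}(c^{2}-\bar c^{2})r'$ is nonzero off $P_2$. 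Classical Schauder theory for oblique derivative problems then lifts $\rho$ to $C^{2+\alpha}$ up to $\Gamma_{\mathrm{shock}}$ on compact subsets of $[\theta_w,\theta_1)$ away from $P_2$, and the $C^{1+\alpha}$ estimate near $P_2$ of Lemma \ref{8} suffices there. Feeding this into the shock ODE $r'=g(r,\theta,\rho)$ yields $r\in C^{2}([\theta_w,\theta_1))$. Since $(\xi_\theta,\eta_\theta)\ne (0,0)$ along the shock, the polar-to-Cartesian map $(r,\theta)\mapsto(r\cos\theta,r\sin\theta)$ is a local $C^{2}$ diffeomorphism, giving $\eta(\xi)\in C^{2}([\xi_w,\xi_1))$.

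For (b), the standard polar curvature formula for the parametric curve $\{(r(\theta)\cos\theta,\, r(\theta)\sin\theta)\}$ gives
\[
\kappa=\frac{r^{2}+2(r')^{2}-rr''}{(r^{2}+(r')^{2})^{3/2}},
\]
so strict convexity reduces to $r^{2}+2(r')^{2}-rr''>0$ on $[\theta_w,\theta_1)$. Squaring the shock evolution equation \eqref{2.42a} gives the identity $\bar c^{2}(\rho,\rho_0)=r^{4}/(r^{2}+(r')^{2})$. Differentiating along $\Gamma_{\mathrm{shock}}$ and simplifying yields
\[
f'(\rho)\,\frac{d\rho}{d\theta}=\frac{2\,r^{3}r'\,[\,r^{2}+2(r')^{2}-rr''\,]}{(r^{2}+(r')^{2})^{2}},
\]
with $f(\rho):=\bar c^{2}(\rho,\rho_0)$. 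Since $p(\rho)=\rho^{\gamma}/\gamma$ is strictly convex, a direct calculation gives $f'(\rho)>0$ for $\rho>\rho_0$. Lemma \ref{13}, combined with the endpoint values $\rho(P_1)=\rho_1>\bar\rho=\rho(P_2)$, forces $d\rho/d\theta>0$ along $\Gamma_{\mathrm{shock}}$. For $\theta\in(\theta_w,\theta_1]$, the identity $\bar c^{2}=r^{4}/(r^{2}+(r')^{2})$ together with $r>\bar c$ (which follows from the monotonicity of $\rho$ and the fact that $P_2$ is the unique point where $r=\bar c$) gives $r'>0$ strictly. Hence $r^{2}+2(r')^{2}-rr''>0$ on $(\theta_w,\theta_1)$.

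The main obstacle is extending strict convexity to the endpoint $\theta=\theta_w$ (i.e.\ $\xi=\xi_w$, the corner $P_2$), where $r'(\theta_w)=0$ and both sides of the key identity vanish. I would handle this via a higher-order Taylor expansion of the squared shock ODE at $\theta_w$: using $r(\theta_w)=\bar c(\bar\rho,\rho_0)$, the Neumann condition on the wedge (which yields $\rho_\theta(\theta_w)=0$, hence $(d\rho/d\theta)(\theta_w)=0$), and successive differentiation, one can express $r''(\theta_w)$ and the leading behavior of $\kappa(\theta_w)$ explicitly, then show $r''(\theta_w)<r(\theta_w)$ by a sign analysis exploiting $f'(\rho)>0$ and the strict monotonicity of $\rho$. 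As a fallback, a continuity argument gives $\kappa(\theta_w)\ge 0$ by passing to the limit from the interior, and the expansion is only needed to rule out $\kappa(\theta_w)=0$.
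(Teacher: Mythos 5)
Your convexity mechanism is essentially the paper's argument in polar clothing. The paper writes the shock as $\eta=\eta(\xi)$ via the implicit function theorem applied to $F(\xi,\eta)=\xi^2+\eta^2-r^2(\theta)$, differentiates the Cartesian shock ODE $\eta'=f(\xi,\eta,\rho)$, and uses the observation that a constant-$\rho$ shock is a straight line to kill $f_\xi+f_\eta\eta'$, leaving $\eta''=f_\rho\,\rho'$; positivity then follows from the explicit computation $\partial f/\partial\bar c^2>0$ together with $d\bar c^2/d\rho>0$ and the strict monotonicity of $\rho$ along $\Gamma_{\rm shock}$ (Lemma \ref{13}). Your route — the polar curvature formula plus differentiating the squared evolution equation $\bar c^2=r^4/(r^2+(r')^2)$ to get $f'(\rho)\rho_\theta$ proportional to $r'\,[r^2+2(r')^2-rr'']$ — encodes exactly the same information and is correct on the open interval $(\theta_w,\theta_1)$; the identities you state check out. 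The trade-off is that the paper's Cartesian form isolates the sign in a single explicit quantity $\partial f/\partial\bar c^2$, whereas your polar form forces you to divide by $r'$, which is precisely what degenerates at $P_2$.

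The one substantive issue is your treatment of the endpoint. The ``main obstacle'' you identify is not actually an obstacle for the statement: strict convexity of $\eta$ on $[\xi_w,\xi_1)$ is a chord inequality, and it follows from $\eta''>0$ on the open interval together with continuity (indeed $C^1$-regularity) at $\xi_w$; this is exactly how the paper concludes, computing $\eta''$ only for $\xi\in(\xi_w,\bar\xi)$ and never claiming positive curvature at $P_2$ (the paper even remarks after Theorem \ref{2.1} that strict positivity of the curvature may fail there). So your proposed higher-order Taylor expansion aimed at showing $r''(\theta_w)<r(\theta_w)$ is both unexecuted and unnecessary — and, if it were needed, it would likely not succeed, since at $\theta_w$ both $r'$ and $d\rho/d\theta$ vanish and the limiting ratio is exactly what your identity cannot control without finer information. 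Relatedly, your regularity step ``feed the $C^{1+\alpha}$ estimate near $P_2$ into $r'=g(r,\theta,\rho)$ to get $r\in C^2([\theta_w,\theta_1))$'' is too quick: $g$ involves $\sqrt{r^2-\bar c^2}$, which degenerates at $P_2$, so differentiating the ODE there produces the same indeterminate ratio; the paper instead reduces to the Cartesian graph $\eta(\xi)$ and works with $\eta'=f$ away from the endpoint. If you replace your endpoint discussion by the chord-convexity remark and either weaken or separately justify the $C^2$ claim at $\xi_w$, your proof is a sound alternative to the paper's.
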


\begin{proof}
We define \begin{equation}\label{3.66}
F(\xi,\eta)=\xi^2+\eta^2-r^2(\theta(\xi,\eta))=0\qquad\text{on}\
\Gamma_{\text{shock}}. \end{equation} It is easy to check that
$$
F_{\eta}=(2\eta-2rr'\theta_{\eta})|_{\xi=\xi_w}=2\eta(\xi_w)\neq 0.
$$
By the implicit function theorem, there exists $\eta=\eta(\xi)$ such
that \eqref{3.66} holds locally on $\Gamma_{\text{shock}}$ near
$\xi=\xi_w$. That is, there exists $\bar{\xi}>0$ such that
$(\xi,\eta(\xi))\in\Gamma_{\text{shock}}$ for
$\xi_w<\xi\leq\bar{\xi}$.

\smallskip
Recall that $\eta'(\xi)=f(\xi,\eta(\xi),\rho(\xi,\eta(\xi)))$. Then
$$
\eta''=f_{\xi}+f_{\eta}\eta'+f_{\rho} \rho' \qquad \mbox{for}\,\,\,
\xi\in(\xi_w,\bar{\xi}).
$$

Notice that
$$
\eta'(\xi)=f(\xi,\eta,\bar{c})=\frac{\xi\eta+\bar{c}\sqrt{\xi^2+\eta^2-\bar{c}^2}}{\xi^2-\bar{c}^2},
$$
then
\begin{eqnarray*}
&&f_{\xi}=\frac{\eta}{\xi^2-\bar{c}^2}+\frac{\bar{c}\xi}{(\xi^2-\bar{c}^2)\sqrt{\xi^2+\eta^2-\bar{c}^2}}
-\frac{2\xi(\xi\eta+\bar{c}\sqrt{\xi^2+\eta^2-\bar{c}^2})}{(\xi^2-\bar{c})^2};\\
&&f_{\eta}=\frac{\xi}{\xi^2-\bar{c}^2}+\frac{\eta\bar{c}}{(\xi^2-\bar{c}^2)\sqrt{\xi^2+\eta^2-\bar{c}^2}}.
\end{eqnarray*}
Thus, we have
\begin{align*}
f_{\xi}+f_{\eta}\eta'=&\frac{\eta}{\xi^2-\bar{c}^2}+\frac{\bar{c}\xi}{(\xi^2-\bar{c}^2)\sqrt{\xi^2+\eta^2-\bar{c}^2}}
-\frac{2\xi^2\eta}{(\xi^2-\bar{c})^2} -\frac{2\xi\bar{c}\sqrt{\xi^2+\eta^2-\bar{c}^2}}{(\xi^2-\bar{c})^2}\\
                      &+\frac{\xi^2\eta}{(\xi^2-\bar{c}^2)^2} +\frac{\xi\bar{c}\sqrt{\xi^2+\eta^2-\bar{c}}}{(\xi^2-\bar{c})^2}
                       +\frac{\xi\eta^2\bar{c}}{(\xi^2-\bar{c}^2)^2\sqrt{\xi^2+\eta^2-\bar{c}^2}} +\frac{\eta\bar{c}^2}{(\xi^2-\bar{c})^2}\\
                      =&\frac{\eta(\xi^2-\bar{c}^2-2\xi^2+\xi^2+\bar{c}^2)}{(\xi^2-\bar{c}^2)}
+\frac{\xi\bar{c}(\xi^2-\bar{c}^2-2\xi^2-2\eta^2+2\bar{c}^2+\xi^2+\eta^2-\bar{c}^2+\eta^2)} {(\xi^2-\bar{c}^2)^2\sqrt{\xi^2+\eta^2-\bar{c}^2}}\\
                      =&0.
\end{align*}

Therefore, the sign of $\eta''$ is determined entirely by the sign
of $f_{\rho}$ and $\rho'$. Note that $\rho$ is increasing,
$\rho'>0$, and $\frac{d\bar{c}^2}{d\rho}>0$. Moreover, we have
\begin{eqnarray}\label{3.67}
\quad \frac{\partial
f}{\partial\bar{c}^2}&=&\frac{-2\xi\eta\bar{c}\sqrt{\xi^2+\eta^2-\bar{c}^2}
+2\eta^2(\xi^2+\eta^2-\bar{c}^2)+(\xi^2+\eta^2)(\bar{c}^2-\eta^2)}
{\bar{c}\big(\xi\eta-\bar{c}\sqrt{\xi^2+\eta^2-\bar{c}^2}\big)^2\sqrt{\xi^2+\eta^2-\bar{c}^2}}\nonumber\\
&=&\frac{\big(\xi\bar{c}-\eta\sqrt{\xi^2+\eta^2-\bar{c}^2}\big)^2}{\bar{c}\big(\xi\eta-\bar{c}
\sqrt{\xi^2+\eta^2-\bar{c}^2}\big)^2\sqrt{\xi^2+\eta^2-\bar{c}^2}}.
\end{eqnarray}

If $\xi \eta \le 0$, it is clear from \eqref{3.67} that $
\frac{\partial f}{\partial\bar{c}^2}>0. $

If $\xi \eta>0$, from \eqref{3.67}, we have
\begin{eqnarray*}
\frac{\partial f}{\partial\bar{c}^2} &=&\frac{
(\xi^2+\eta^2)^2(\bar{c}^2-\eta^2)^2\big(\xi\eta+\bar{c}\sqrt{\xi^2+\eta^2-\bar{c}^2}\big)^2}
{\bar{c}\sqrt{\xi^2+\eta^2-\bar{c}^2}(\xi^2-\bar{c}^2)(\eta^2-\bar{c}^2)\big(\xi \bar{c}+\eta\sqrt{\xi^2+\eta^2-\bar{c}^2}\big)^2}\\
&=&\frac{
(\xi^2+\eta^2)^2\big(\xi\eta+\bar{c}\sqrt{\xi^2+\eta^2-\bar{c}^2}\big)^2}
{\bar{c}(\bar{c}^2-\xi^2)\sqrt{\xi^2+\eta^2-\bar{c}^2}\big(\xi
\bar{c}+\eta\sqrt{\xi^2+\eta^2-\bar{c}^2}\big)^2}
>0.
\end{eqnarray*}
These imply that $\eta=\eta(\xi)$ is strictly convex for $\xi \in
[\xi_w,\xi_1)$.

\end{proof}

\smallskip
Lemma \ref{convex} yields that problem \eqref{3.1}--\eqref{3.5} is
equivalent to the following free boundary problem in the
self-similar coordinates:
\begin{enumerate}
\item[(i)] Equation:
\begin{equation}\label{3.58}
L\rho=\sum_{i,j=1}^2D_i\big(a_{ij}(\xi,
\eta,\rho)D_j\rho\big) +\sum_{i=1}^2b_i(\xi,\eta)D_i\rho=0\qquad
\text{in}\,\, \Omega
\end{equation}
with
\begin{eqnarray*}
&& a_{11}(\xi,\eta,\rho)=c^{2}(\rho)-\xi^2, \quad
a_{22}(\xi,\eta,\rho)=c^{2}(\rho)-\eta^2,\\
&& a_{12}(\xi,\eta,\rho)=a_{21}(\xi,\eta,\rho)=-\xi\eta,\quad
b_1(\xi,\eta)=\xi,\quad b_2(\xi,\eta)=\eta.
\end{eqnarray*}

\item[(ii)] The shock equation:
$$
\frac{d\eta}{d\xi}=f(\xi,\eta,\rho)=\frac{\xi\eta+\bar{c}\sqrt{\xi^{2}+\eta^{2}-\bar{c}^2}}
{\xi^{2}-\bar{c}^2}\qquad\text{with}\ \eta(\xi_1)=\eta_1,
$$
with the boundary condition on $\Gamma_{\text{shock}}$:
\begin{equation}\label{3.58a} N\rho=\sum_{i=1}^2\beta_iD_i\rho =0\qquad
\text{on}\
 \Gamma_{\text{shock}}=\{\eta=\eta(\xi)\, :\, 0\leq\xi\leq\xi_1\},
 \end{equation}
where $\beta_1$ and $\beta_2$ are the following functions of
$(\xi,\eta)$, $\rho$, and $\eta'$:
\begin{eqnarray}\label{3.59}
\beta_{1}&=&(\xi^{2}+\eta^{2})(-\eta'\xi+\eta)\big(c^{2}(\rho)+\bar{c}^2(\rho,\rho_0)\big)\\
&&-2\bar{c}^2(\rho,\rho_0)\big(-\eta'\xi(c^{2}+\eta^{2})+(\eta-\eta(\eta')^{2}-\xi\eta')(c^{2}-\xi^{2})
     \big) \nonumber
\end{eqnarray}
and
\begin{eqnarray}\label{3.60}
\beta_{2}&=&\eta'(\xi^{2}+\eta^{2})(\eta-\eta'\xi)\big(c^{2}(\rho)+\bar{c}^2(\rho,\rho_0)\big)\\
&&-2\bar{c}^2(\rho,\rho_0) \big(  (\eta'\eta-\xi-\xi
(\eta')^2)(c^{2}-\eta^{2})+\eta'\eta(c^{2}+\xi^{2})\big). \nonumber
\end{eqnarray}

\item[(iii)] The remaining boundary conditions:
\begin{equation}\label{3.62} \rho=\rho_2\,\,\, \text{on}\
\Gamma_{\text{sonic}},\qquad \rho_{\nnu}=0\,\,\, \text{on}\
\Gamma_0,\qquad \rho(P_2)=\bar{\rho}, \end{equation} where $\nnu$ is
the outward normal to $\Omega$ at $\Gamma_0$.
\end{enumerate}
It is easy to check that \eqref{3.58a} is the oblique derivative
boundary condition along $\Gamma_{\text{shock}}$.

\smallskip
With Lemma \ref{convex}, we can show that Case 1 is the only case
for the solutions, which implies that we can obtain the finer
regularity near $P_2$.

\begin{lemma}\label{10a}
Suppose that $(\rho,r)$ is the solution to the free boundary problem
\eqref{3.1}--\eqref{3.5}. Then the shock does not meet the circle
$r=r_0$ at the wedge.
\end{lemma}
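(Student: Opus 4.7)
The plan is to argue by contradiction: suppose $r(\theta_w) = c(\rho_0) =: c_0$, so that $P_2$ lies on the sonic circle $C_0$. First, I would read off the immediate consequences at $P_2$. The boundary value \eqref{2.47a} forces $\bar\rho = (\bar c_{\rho_0})^{-1}(c_0) = \rho_0$, and combining this with Lemmas \ref{3} and \ref{5} together with the strong maximum principle, $\rho$ attains its global minimum $\rho_0$ on $\overline\Omega$ only at $P_2$. The shock equation \eqref{2.42a} forces $r'(\theta_w) = 0$, so $\Gamma_{\text{shock}}$ is tangent at $P_2$ to $C_0$ from outside, and the coefficients \eqref{2.9} both vanish there, so the oblique derivative condition degenerates completely at $P_2$ and provides no first-order information.

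Next, I would exploit the symmetry of equation \eqref{1.8b} under the reflection $\theta \mapsto 2\theta_w - \theta$ together with the Neumann condition $\rho_\nu = 0$ on $\Gamma_0$ to extend $\rho$ by even reflection across $\{\theta = \theta_w\}$, producing a $C^{2+\alpha}$ solution $\widetilde\rho$ on the enlarged domain $\widetilde\Omega$ of the same degenerate elliptic equation. The reflected free boundary $\widetilde\Gamma_{\text{shock}}$ satisfies $\widetilde r(\theta_w) = c_0$ and $\widetilde r(\theta) > c_0$ on both sides of $\theta_w$, and the strict convexity of $\Gamma_{\text{shock}}$ in the $(\xi,\eta)$-coordinates furnished by Lemma \ref{convex} guarantees that $\widetilde\Gamma_{\text{shock}}$ glues $C^1$-smoothly across $P_2$ and lies entirely in the closed complement of the disk $\{r < c_0\}$, tangent to $C_0$ at the now \emph{interior} point $P_2$ of the reflected shock curve.

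I would then run the three-step barrier construction from the proof of Lemma \ref{5} applied to $\widetilde\rho$ with $\theta_0 = \theta_w$. The key structural point is that $P_2$ is no longer a corner of the ambient subsonic region, so the two-sided quadratic barrier $D_i(\theta - \theta_w)^2$ appearing in the supersolution $w_1$, the subsolution $w_2$, and the auxiliary comparison $w_3$ is now admissible in a full bilateral neighborhood of $\theta_w$ — this is precisely the obstruction that blocked direct use of Lemma \ref{5} at $\theta_0 = \theta_w$ in the original one-sided geometry. Comparing $c^2(\widetilde\rho)$ against $w_1, w_2, w_3$ on such a neighborhood via the maximum principle yields, as in that proof, the sharp two-sided bound $a(c_0 - r)^{1/2} \le \widetilde\rho - \rho_0 \le A(c_0 - r)^{1/2}$ in a bilateral neighborhood of $P_2$.

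Feeding this optimal $C^{1/2}$-behavior into the anisotropic rescaling $u(S,T) = S^{1/5}\,v(S^{-12/5}, \theta_w + S^{-14/5}T)$ with $v = c^2(\widetilde\rho) - c_0^2$ converts the governing equation into a uniformly elliptic linear equation with coefficients bounded above and below on annular regions. The Moser-Harnack inequality (Theorem 8.20 of \cite{gt}) then forces $\sup u \le C \inf u$ on the annulus $x_0^{-1} \le S \le x_0^{-5/4}$ with $C$ independent of $x_0$, and combined with the two-sided sharp bound above this yields $x_0^{-7/20} \le C$, giving a contradiction as $x_0 \to 0^+$ — the same final contradiction as at the end of Lemma \ref{5}. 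The principal obstacle I anticipate is a careful verification that the reflection is truly compatible with the fully degenerate shock condition at $P_2$, so that $\widetilde\rho$ qualifies as an admissible solution for the maximum-principle comparison against the $w_i$'s near $\widetilde\Gamma_{\text{shock}}$; here the strict convexity from Lemma \ref{convex} is what ultimately makes the geometric check tractable.
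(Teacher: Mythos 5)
Your global strategy (argue by contradiction at $P_2$, derive a two-sided $(c_0-r)^{1/2}$ behaviour of $\rho-\rho_0$ near $P_2$ by barriers, then conclude via the anisotropic scaling and the Harnack inequality of Theorem 8.20 in \cite{gt}) is exactly the paper's, and the even reflection across $\{\theta=\theta_w\}$ is an acceptable substitute for the paper's device of choosing barriers that are automatically Neumann-compatible on the wedge (the paper's upper barrier satisfies $\phi_{\nnu}=\phi_\theta=0$ at $\theta=\theta_w$ by construction, so reflecting gains nothing essential). The gap is in the central step, where you claim that after reflection the three-step barrier construction of Lemma \ref{5} applies verbatim at $\theta_0=\theta_w$, and you identify the obstruction that previously blocked this as the one-sidedness of the quadratic term $D_i(\theta-\theta_w)^2$. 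That is not the obstruction. In the proof of Lemma \ref{5} the contradiction hypothesis forces $r(\theta)\equiv c_0$ on a whole interval, so near the boundary point the domain lies in $\{r<c_0\}$, the terms $(c_0-r)^{1/2}$ and $(c_0-r)^{\beta_1}$ are well defined, and the comparison between the barrier and $c^2(\rho)$ on the shock portion of the boundary is trivial because $\rho\equiv\rho_0$ there. In the present situation the shock touches $\{r=c_0\}$ only at the single point $P_2$: for $\theta>\theta_w$ one has $r(\theta)>c_0$ and $\rho>\rho_0$ on $\Gamma_{\text{shock}}$, and the (reflected) domain near $P_2$ contains points with $r>c_0$. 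Hence the Lemma \ref{5} barriers are not even defined on part of the comparison region, and the required inequality on $\Gamma_{\text{shock}}$ is no longer automatic; your proposal never supplies it.

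Concretely, two ingredients are missing, and both are where Lemma \ref{convex} actually enters. First, the upper barrier must be written in the variable $\eta=r\cos(\theta-\theta_w)$ rather than $r$: since $r'(\theta_w)=0$, the convex shock is tangent at $P_2$ to the line $\{\eta=c_0\}$ and stays on the side $\eta\le c_0$, so $c_0-\eta\ge 0$ throughout $\Omega$ near $P_2$ and $\phi=c_0^2+A_1(c_0-\eta)^{1/2}-B_1(c_0-\eta)^{\beta_1}+C_1(\theta-\theta_w)^2$ makes sense. Second, one needs the quantitative estimate $0\le c^2(\rho)-c_0^2\le C(\theta-\theta_w)^2$ on $\Gamma_{\text{shock}}$ near $P_2$, obtained from $r^2-c_0^2\le r^2\sin^2(\theta-\theta_w)$ (convexity plus tangency) together with $\bar{c}(\rho,\rho_0)\le r$ and the monotone dependence of $c^2$ and $\bar{c}^2$ on $\rho$; this is what lets the quadratic term $C_1(\theta-\theta_w)^2$ dominate the solution on the shock part of the boundary of the comparison region. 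Your use of convexity is only the qualitative remark that the reflected shock stays outside the disk $\{r<c_0\}$, which yields neither ingredient. Finally, the two-sided bound $a(c_0-r)^{1/2}\le\rho-\rho_0\le A(c_0-r)^{1/2}$ cannot hold in a full bilateral neighborhood of $P_2$, since its left-hand side forces $r\le c_0$; it holds only in a sector $V$ with vertex $P_2$ along the wedge, and the scaling/Harnack step must be run inside that sector, as in the paper. With these corrections your outline reduces to the paper's proof; as written, the maximum-principle comparisons it relies on cannot be carried out.
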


\begin{proof}
The main idea of the proof is the same as that in Lemma \ref{5}, and
the only main difference is that the domain to be considered is a
sector instead of a ball. We only list the major procedure and the
difference here. We show our claim by contradiction. Otherwise,
$r(\theta_w)= c_0$.

First, let $\eta=r\cos(\theta-\theta_w)$ and consider
$$
\phi=c_0^{2}+A_1(c_0-\eta)^{\frac{1}{2}}-B_1(c_0-\eta)^{\beta_1}+C_1(\theta-\theta_0)^2,
$$
where $\theta\in[\theta_w,\theta_w+\delta]$, $\delta>0$ small
enough,  $A_1, B_1,C_1>0$ and $\frac{1}{2}<\beta_1<1$, all of which
will be specified later to prove that $\rho\in C^{\frac{1}{2}}$ near
this boundary point.

Since $0\leq c_0^2-\eta^2=(c_0^2-r^2)+r^2\sin^2(\theta-\theta_w)$ on
$\Gamma_{\text{shock}}$ from its convexity indicated in Lemma
\ref{convex}, we have
$$
0\leq r^2-c_0^2\leq r^2\sin^2(\theta-\theta_w)\leq
C(\theta-\theta_w)^2 \qquad\mbox{on $\Gamma_{\text{shock}}$}
$$
for some constant $C>0$. This implies that $ \bar{c}^2-c^2_0\leq
r^2-c^2\leq r^2-c_0^2\leq C(\theta-\theta_w)^2 $. Then
$$
c^2-c_0^2\leq C(\theta-\theta_w)^2,
$$
since $c^2$ and $\bar{c}^2$ are both functions of $\rho$. We can
choose $C_1>0$ so large that $c^2\leq \phi$ on
$\Gamma_{\text{shock}}$. Then, as in the proof of Lemma \ref{5}, we
can now show that $\phi$ is an upper barrier of $\rho$, {\it i.e.},
$c^2\leq \phi$ in $N_1$, which implies
$$
0\leq c^2-c_0^2\leq
A_1(c_0-\eta)^{\frac{1}{2}}+C_1(\theta-\theta_w)^2.
$$

Next, for a lower barrier of $\rho$, as the proof of Lemma \ref{5},
we can show that there exist a neighborhood $N_2$ of
$(r_w,\theta_w)$ and a constant $A_2>0$ such that
$$
c^2-c_0^2\geq A_2(c_0-r)^{\frac{1}{2}}\qquad \mbox{in
$N_2\cap\{(r,\theta)\,:\,r\leq c_0\}$}.
$$
The only new here is the boundary $r=c_0$, which is obvious. This
implies that
$$
a(c_0-r)^{\frac{1}{2}}\leq v:=\rho-\rho_0\leq A(c_0-r)^{\frac{1}{2}}
\qquad\mbox{in $N_1\cap N_2\cap
 V$},
$$
where $V$ is an upward sector containing the wedge, with the vertex
at $P_2$ and the angle smaller than $\frac{\pi}{2}$, for some
constants $a$ and $A$ depending on $V$. This implies that the
optimal regularity along the wedge is $C^{\frac{1}{2}}$ near the
sonic circle.

With this optimal regularity in hand, we introduce the coordinates:
\begin{equation} x=c_{0}-r,\,\, y=\theta-\theta_{w},\,\, v=c^2-c^2_0. \end{equation} Thus,
rewriting the equation for $c^2$ in the divergence form, we have
\begin{equation}
Qv=\big(a_{11}(v+2c_0x-x^2)v_{x}\big)_{x}+b_1v_{x}+(a_{22}v_{y})_{y}=0,
\end{equation} where
$a_{11}=\frac{c^{\frac{2(2-\gamma)}{\gamma-1}}}{\gamma-1}$,
$a_{22}=\frac{c^{\frac{2}{\gamma-1}}}{\gamma-1}\frac{1}{r^2}$, and
$b_1=\frac{c^{\frac{2}{\gamma-1}}}{\gamma-1}\frac{1}{r}$.

As in the proof of Lemma \ref{5}, scale $v$ in $N_1\cap N_3\cap V$
by defining \begin{equation}
u(S,T)=\frac{1}{S^{\frac{1}{5}}}v(S^{-\frac{12}{5}},S^{-\frac{14}{5}}T)
\end{equation} for $(S^{-\frac{12}{5}},S^{-\frac{14}{5}}T)\in N_1\cap N_3\cap
V$. Moreover, $u$ satisfies the governing equation \eqref{4.9}.

From the optimal continuity, $0<a\leq S^{\frac{7}{5}}u\leq A$. Then,
exactly following the proof of Lemma \ref{5}, we obtain a
contradiction when $x_0$ is small. This completes the proof.
\end{proof}

Finally, we establish the Lipschitz continuity for the solution near
the degenerate sonic boundary.

\begin{lemma}\label{17}
The solution $\rho$ to the free boundary problem
\eqref{3.58}--\eqref{3.62} is Lipschitz continuous up to the
boundary $\Gamma_{\text{sonic}}$.
\end{lemma}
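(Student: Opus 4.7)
The plan is to establish the Lipschitz bound by constructing an explicit linear upper barrier for the nonnegative quantity $\psi := c^2(\rho_1) - c^2(\rho)$, which vanishes on $\Gamma_{\text{sonic}}$. Since $c^2 = \rho^{\gamma-1}$ is smooth and strictly increasing in $\rho$, a Lipschitz bound $\psi \leq C\,\text{dist}(\cdot,\Gamma_{\text{sonic}})$ is equivalent to the Lipschitz continuity of $\rho$ up to $\Gamma_{\text{sonic}}$.

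First I would pass to the local coordinates $(x,y)$ introduced in \S5, in which $\Gamma_{\text{sonic}}$ becomes a segment of $\{x=0\}$ with $\Omega$ lying in $\{x>0\}$, and recast \eqref{1.8b} as \eqref{1.10}, namely
\be
(2c_{1}x-\psi)\psi_{xx}+\psi_{yy}+c_{1}\psi_{x}-\psi_{x}^{2}-\frac{1}{(\gamma-1)c_1^2}\psi_{y}^{2} + (\text{small terms}) = 0,
\ee
which is strictly elliptic in $\{x>0\}$ because $c^2(\rho) > r^2$ in $\Omega$ by Lemma \ref{2}, equivalently $\psi < 2c_1 x + O(x^2)$. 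Inside any compact subdomain of $\Omega$ away from the degenerate boundary, $\rho$ is already smooth by Lemma \ref{15} and the interior Schauder theory, so the only issue is the behavior as $x\to 0^+$.

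Next I would fix $y_0$ on the relative interior of $\Gamma_{\text{sonic}}$ (bounded away from $P_1$ and from $\Gamma_{\text{sonic}}\cap\Gamma_0$) and, in a small rectangle $R_{\delta,\sigma}=\{0\le x\le \delta,\ |y-y_0|\le\sigma\}\subset\Omega$, try the barrier
\be
\phi(x,y) = Ax + M(y-y_0)^2.
\ee
A direct substitution into \eqref{1.10} yields $2M - A^2 + c_1 A + O(\delta + \sigma)$, which, for $A$ chosen sufficiently larger than $c_1$ and then $M$ chosen sufficiently small relative to $A^2 - c_1 A$, makes $\phi$ a strict supersolution of the full nonlinear operator on $R_{\delta,\sigma}$. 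On the base $\{x=0\}$, $\phi\ge 0 = \psi$; on the top $\{x=\delta\}$ and on the lateral sides $\{|y-y_0|=\sigma\}$, the global bound $\psi\le c_1^2$ (from $\rho\le\rho_1$) gives $\phi \ge \psi$ once $A\delta$ and $M\sigma^{2}$ are large enough. The comparison principle then forces $\psi\le \phi$ on $R_{\delta,\sigma}$; evaluating along $y=y_0$ yields $\psi(x,y_0)\le Ax$. Sliding $y_0$ along the interior of $\Gamma_{\text{sonic}}$ with uniform constants produces $\psi\le Cx$ in a one-sided neighborhood of $\Gamma_{\text{sonic}}$, hence the desired Lipschitz bound.

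For the endpoints of $\Gamma_{\text{sonic}}$, I would adapt the barrier analogously to the corner arguments already used for Lemma \ref{5}: near $P_1$ one adjoins a term compatible with the oblique condition $M\rho=0$ on $\Gamma_{\text{shock}}$ (using the obliqueness $\mu\ne 0$ there, guaranteed since the shock is separated from $C_0$), and near $\Gamma_{\text{sonic}}\cap\Gamma_0$ one adjoins a term that respects the Neumann condition $\rho_\nnu=0$, using reflection across $\Gamma_0$. The main obstacle is precisely the degeneration of the principal coefficient $2c_1 x-\psi$ at $x=0$, which rules out a direct application of standard interior Lipschitz or Schauder estimates; the argument must rely entirely on the explicit sign structure of the nonlinear operator acting on the linear barrier, and on matching it to the oblique/Neumann conditions at the corners.
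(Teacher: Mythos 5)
Your proposal is correct in substance, but it takes a genuinely different and considerably heavier route than the paper. The paper's proof of this lemma is purely algebraic and uses no barrier at all: since $\rho\le\rho_1$ in $\Omega$ and, by Lemma \ref{2} (an inequality that survives the limits $\e,\delta\to0$, cf. \eqref{3.21}), $c^2(\rho)\ge\xi^2+\eta^2$ in $\overline\Omega$, one immediately gets the squeeze $0\le c^2(\rho_1)-c^2(\rho)\le c^2(\rho_1)-(\xi^2+\eta^2)\le 2r_1\,\big|r_1-\sqrt{\xi^2+\eta^2}\big|$, i.e., the deviation of $c^2(\rho)$ from its boundary value $c^2(\rho_1)$ is bounded by a constant times the distance to $\Gamma_{\text{sonic}}$, at every point of $\overline\Omega$ including the corner points $P_1$ and the endpoint of $\Gamma_{\text{sonic}}$ on $\Gamma_0$. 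Your supersolution $\phi=Ax+M(y-y_0)^2$ for \eqref{5.3} does work on the relative interior of $\Gamma_{\text{sonic}}$: since $\phi_{xx}=0$, the degenerate coefficient is harmless, the frozen-coefficient comparison at an interior maximum of $\psi-\phi$ goes through because $2c_1x-\psi+O_1\ge0$ (again Lemma \ref{2}) and the first-order differences vanish there, and the constants $A,M$ can be chosen consistently as you indicate; this is essentially the same machinery as Lemma \ref{5.2} and Proposition \ref{18} run in the crude direction, so its one advantage is that it previews the finer \S 5 analysis. What it costs is that the Lipschitz constant is far from the explicit $4r_1$ of the paper, and the endpoints ($P_1$, where $\Gamma_{\text{sonic}}$ meets $\Gamma_{\text{shock}}$, and the corner on $\Gamma_0$) require the additional oblique- and Neumann-compatible barriers that you only sketch — precisely the places the paper's two-line argument covers for free. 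One further caveat: both your argument and the paper's establish the boundary-distance estimate $|c^2(\rho)-c^2(\rho_1)|\le C\,\mathrm{dist}(\cdot,\Gamma_{\text{sonic}})$, which is what the lemma is used for; your opening claim that this is \emph{equivalent} to a two-point Lipschitz bound for $\rho$ up to $\Gamma_{\text{sonic}}$ overstates what either proof literally shows at this stage (the genuine $C^{0,1}$ and $C^{1,\alpha}$ statements are completed in \S 5).
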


\begin{proof}
On one hand, since $\rho\leq \rho_1$ in $\Omega$, we have
$$
c^2(\rho)-\xi^2-\eta^2<c^2(\rho_1)-\xi^2-\eta^2.
$$
On the other hand, it follows from Lemma \ref{15}
 that
$$
c^2(\rho)-\xi^2-\eta^2>\xi^2+\eta^2-c^2(\rho_1)\qquad \mbox{in
$\Omega$}.
$$
Then we have
\begin{equation*}
\begin{array}{lll}
|c^2(\rho)-c^2(\rho_1)|&\leq&|c^2(\rho)-\xi^2-\eta^2|+|c^2(\rho_1)-\xi^2-\eta^2|\\[1.5mm]
&\leq&2|c^2(\rho_1)-\xi^2-\eta^2|\\[1.5mm]
&\leq&4\, c(\rho_1)|c(\rho_1)-\sqrt{\xi^2+\eta^2}|,
\end{array}
\end{equation*}
which implies that $\rho$ is Lipschiz continuous up to the
degenerate boundary $\Gamma_{\text{sonic}}$.

\end{proof}

\medskip
\begin{proof}[\bf Proof of the Existence Part of  Theorem \ref{1}]
 The above seven lemmas, {\it i.e.}, Lemmas \ref{15}--\ref{17}, show that there exists
 a solution
 $$
 (\rho,r)\in C^{2+\alpha}(\Omega)\cap C^{\alpha}(\overline{\Omega})
\cap C^{0,1}(\Omega\cup\Gamma_{\text{sonic}})\times
C^{2+\alpha'}((\theta_w,\theta_1))\cap C^{1,1}([\theta_w,\theta_1]),
$$
which satisfies \eqref{2.40a}--\eqref{2.47a}. This completes the
proof of the existence part.
\end{proof}

\section{Proof of Theorem \ref{2.1}: Optimal regularity near the sonic boundary}

In this section, we prove that the Lipschitz continuity is the
optimal regularity for $\rho$ across the sonic boundary
$\Gamma_{\text{sonic}}$, as well as at the intersection point $P_1$
between $\Gamma_{\text{sonic}}$ and $\Gamma_{\text{shock}}$. In \S
4, we have shown that the solution $\rho$ to the free boundary
problem  \eqref{3.58}--\eqref{3.62} is Lipschitz continuous in
$\Omega$ up to the degenerate boundary $\Gamma_{\text{sonic}}$. Now
we employ the approach introduced in Bae-Chen-Feldman \cite{bcm}
with the aid of the estimates in \S 4 to analyze the finer behavior
of $\rho$ near the sonic circle $r=r_1:=c(\rho_1)$.

\smallskip
For $\varepsilon\in(0, \frac{c_{1}}{2})$, we denote by
$$
\Omega_{\varepsilon}:=\Omega\cap\{(r,\theta):0<c_{1}-r<\varepsilon\},
$$
the $\varepsilon-$neighborhood of the sonic circle
$\Gamma_{\text{sonic}}$ within $\Omega$. In $\Omega_{\varepsilon}$,
we introduce the coordinates: \begin{equation}\label{5.1} (x,
y)=(c_{1}-r,\, \theta-\theta_{1}). \end{equation} One of our main
observations is that it is more convenient to study the regularity
in terms of the difference between $c^2(\rho_1)$ and $c^2(\rho)$:
\begin{equation} \psi:=c^2(\rho_1)-c^2(\rho), \end{equation} since $\psi$ and $\rho$ have
the same regularity in $\Omega_{\varepsilon}$.

It follows from \eqref{1.8b} that $\psi$ satisfies
\begin{equation}
\begin{array}{lll} \label{5.3}
\mathcal{L}_{1}\psi&:=&(2c_{1}x-\psi+O_{1})\psi_{xx}+(c_{1}+O_{2})\psi_{x}-(1+O_{3})\psi^{2}_{x}
\\[2mm]
&& +(1+O_{4})\psi_{yy}-(\frac{1}{(\gamma-1)
c^2_{1}}+O_{5})\psi^{2}_{y}=0\qquad\quad
 \text{in}\ Q^{+}_{r,R}
\end{array}
\end{equation}
in the $(x,y)-$coordinates, where \begin{equation}\label{5.4}
\begin{array}{ll}
O_{1}(x,\psi)=-x^{2},  \qquad &
O_{2}(x,\psi)=-3x+\frac{\psi}{c_{1}},\\[1.5mm]
O_{3}(x,\psi)=-\frac{\gamma-2}{\gamma-1}(2c_{1}x-\psi-x^{2}),
\qquad & O_{4}(x,\psi)=\frac{c_{1}^{2}-\psi}{(c_{1}-x)^{2}}-1,\\[1.5mm]
O_{5}(x,\psi)=\frac{1}{(c_{1}-x)^{2}}-\frac{1}{c_{1}^{2}}.
\end{array}
\end{equation}
Moreover, $\psi$ satisfies \begin{equation}\label{5.5} \psi>0\qquad
\text{in}\ Q^{+}_{r,R} \end{equation} and the following Dirichlet
boundary condition: \begin{equation}\label{5.6} \psi=0\qquad
\text{on}\,\,
\partial Q^{+}_{r,R}\cap\{x=0\},
\end{equation} where
$Q^{+}_{r,R}:=\{(x,y):x\in(0,r),|y|<R\}\subset\mathbb{R}^{2}$, with
$R=\theta_w-\theta_1$, since we can extend $\psi(x,y)$ from
$\Omega_{\varepsilon}$, by defining $\psi(x,y)=\psi(x,-y)$ for
$(x,y)\in\Omega_{\varepsilon}$, and extend the domain
$\Omega_{\varepsilon}$ with respect to $y$. Thus, without further
comment, we study the behavior of $\psi$ in $Q^{+}_{r,R}$.

It is easy to see that the terms $O_{i}(x,y),\ i=1,\cdots,5$, are
continuously differentiable and
\begin{eqnarray}
\qquad \qquad \frac{|O_{1}(x,y)|}{x^{2}}+
 \frac{|O_{k}(x,y)|}{x}
+\frac{|DO_{1}(x,y)|}{x}+|DO_{k}(x,y)|\leq N \quad \text{for}\
k=2,\cdots,5, \label{5.8}
\end{eqnarray}
in $\{x>0\}$ for some constant $N$ depending only on $c_1$ and
$\gamma$. Inequality \eqref{5.8} implies that the terms
$O_{i}(x,y),\ i=1,\cdots,5$, are small. Thus, the main terms of
\eqref{5.3} form the following equation:
 \begin{equation}\label{5.10}
 (2c_{1}-\psi)\psi_{xx}+c_{1}\psi_{x}-\psi^{2}_{x}
+\psi_{yy}-\frac{1}{(\gamma-1) c^2_{1}}\psi^{2}_{y}=0\qquad
\text{in}\ Q^{+}_{r,R}. \end{equation} It follows from Lemmas \ref{15}
and \ref{10a} that
\begin{equation}\label{5.11} 0\leq\psi\leq
2(c_1-\vartheta)x,
\end{equation}
where $\vartheta$ depends only on
$\rho_1$ and $\gamma$. Then equation \eqref{5.10} is uniformly
elliptic in every subdomain $\{x>\delta\}$ with $\delta>0$. The same
is true for \eqref{5.3} in $Q^{+}_{r,R}$ if $r$ is sufficiently
small.

\begin{remark}
If $\hat{r}$ is sufficiently small, depending only on $c_{1}$ and
$\gamma$, then \eqref{5.8} and \eqref{5.11} imply that \eqref{5.3}
is uniformly elliptic with respect to $\psi$ in $Q^{+}_{r,R}\cap
\{x>\delta\}$ for any $\delta\in(0,\frac{\hat{r}}{2})$. We will
always assume such a choice of $\hat{r}$ hereafter.
\end{remark}

\subsection{First-order lower bound of $\psi$}
In order to prove that $C^{0,1}$ is the optimal regularity of $\psi$
across the sonic boundary, our idea is to construct a positive
subsolution of  \eqref{5.3} and \eqref{5.5}--\eqref{5.6} first,
which provides our desired lower bound of $\psi$.

\begin{lemma}\label{5.2}
Let $\psi$ be a solution of the Dirichlet problem \eqref{5.3} and
\eqref{5.5}--\eqref{5.6}. Then there exist $\hat{r}>0$ and $\mu>0$,
depending only on $c_{1}$, $\gamma$, $\theta_w$, and
$\inf\limits_{Q^{+}_{\hat{r},R}\cap \{x>\hat{r}/2\}}\psi$, such
that, for all $r\in (0,\frac{\hat{r}}{2}]$,
\begin{equation}\label{5.12a} \psi(x,y)\geq\mu c_{1} x\ \
\qquad\text{in}\ Q^{+}_{r, \frac{15R}{16}}. \end{equation}
\end{lemma}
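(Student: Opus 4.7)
The strategy is to construct an explicit positive subsolution $w$ of \eqref{5.3} of the form $w(x,y)=\mu c_{1}x\,\chi(y)$, matching $\psi$ from below at the Dirichlet face $\{x=0\}$, remaining below $\psi$ on the rest of $\partial Q^{+}_{\hat r/2,R}$, and concluding $w\leq\psi$ in $Q^{+}_{\hat r/2,R}$ by comparison. Since $Q^{+}_{r,15R/16}\subset Q^{+}_{\hat r/2,15R/16}$ for any $r\in(0,\hat r/2]$, it suffices to establish the bound in the larger set $Q^{+}_{\hat r/2,15R/16}$ once, and the result for smaller $r$ follows by restriction.

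For the profile I would take $\chi(y)=\cos^{2}(\sigma y)$ with $\sigma=\pi/(2R)$, so that $\chi(\pm R)=0$, $\chi'(\pm R)=0$, and $\chi''=2\sigma^{2}-4\sigma^{2}\chi$. Substituting into $\mathcal L_{1}$, the coefficient of $\psi_{xx}$ is killed because $w_{xx}=0$, and a direct computation gives
\begin{equation*}
\mathcal L_{1}w=\mu c_{1}^{2}\chi\Bigl[1+\tfrac{O_{2}}{c_{1}}-(1+O_{3})\mu\chi-\tfrac{4(1+O_{4})x\sigma^{2}}{c_{1}}\Bigr]+2(1+O_{4})\mu c_{1}x\sigma^{2}-\bigl(\tfrac{1}{(\gamma-1)c_{1}^{2}}+O_{5}\bigr)\mu^{2}c_{1}^{2}x^{2}(\chi')^{2}.
\end{equation*}
Using \eqref{5.8} to absorb $O_{2},\ldots,O_{5}$ once $\hat r$ is small enough (depending on $c_{1},\gamma,R$), and requiring $\mu\leq 1/4$ together with $\hat r$ small enough that $4x\sigma^{2}/c_{1}\leq 1/4$, the bracket is $\geq 1/2$, while the last term is dominated by the middle one because $C\mu^{2}x^{2}\sigma^{2}\leq \mu c_{1}x\sigma^{2}$ for $\mu\hat r$ small. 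This yields $\mathcal L_{1}w\geq \tfrac12\mu c_{1}^{2}\chi+\mu c_{1}x\sigma^{2}\geq 0$ in $Q^{+}_{\hat r/2,R}$, with strict positivity even near $y=\pm R$ where $\chi$ vanishes.

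Next I would verify the boundary inequality $w\leq\psi$ on $\partial Q^{+}_{\hat r/2,R}$ piece by piece: on $\{x=0\}$ both functions vanish by \eqref{5.6}; on $\{y=\pm R\}$ the factor $\chi$ vanishes so $w=0$, and $\psi\geq 0$ by \eqref{5.5}; on $\{x=\hat r/2\}$ one uses the positive lower bound $m:=\inf_{Q^{+}_{\hat r,R}\cap\{x>\hat r/2\}}\psi>0$ appearing in the hypotheses (itself a consequence of $\rho<\rho_{1}$ strictly in the interior), together with the choice $\mu\leq 2m/(c_{1}\hat r)$, to conclude $w(\hat r/2,y)\leq \mu c_{1}\hat r/2\leq m\leq\psi$. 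Fixing $\mu>0$ as the minimum of the above constraints and invoking a comparison principle then gives $\psi\geq w=\mu c_{1}x\cos^{2}(\sigma y)\geq \mu\cos^{2}(15\pi/32)\,c_{1}x$ on $Q^{+}_{\hat r/2,15R/16}$; after renaming the constant this is \eqref{5.12a}.

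The principal obstacle is the comparison step, since \eqref{5.3} is quasilinear and degenerates at $\{x=0\}$. I would handle it by forming $v=w-\psi$, linearising $\mathcal L_{1}w-\mathcal L_{1}\psi\geq 0$ via the mean value theorem into a linear equation for $v$ whose principal part coefficient $(2c_{1}x-\tilde\psi+O_{1})$ only vanishes at $x=0$, and applying the classical strong maximum principle on $\{x>\delta\}\cap Q^{+}_{\hat r/2,R}$ where uniform ellipticity holds by Remark 5.1. Letting $\delta\to 0$ and using the continuity of both $w$ and $\psi$ up to $\{x=0\}$, where both vanish (the Lipschitz regularity of $\psi$ up to $\Gamma_{\text{sonic}}$ proved in Lemma 4.7 giving a quantitative decay on $\{x=\delta\}$ and an explicit bound for $w$), yields $w\leq\psi$ on all of $Q^{+}_{\hat r/2,R}$ and completes the argument.
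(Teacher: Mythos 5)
Your proposal is correct and follows essentially the same route as the paper: an explicit subsolution of the form $\mu c_1 x$ times a profile in $y$ vanishing at $y=\pm R$ (the paper uses a piecewise-polynomial bump $g(y)$ after recentering to $y_0=0$, you use $\cos^2(\pi y/2R)$), with $w_{xx}=0$ ensuring the degenerate coefficient only acts on $\psi$, boundary comparison via $\psi\ge 0$, the Dirichlet condition, and the positive infimum on $\{x>\hat r/2\}$, and a maximum-principle argument to conclude $w\le\psi$. The remaining differences (your $\delta$-exhaustion of $\{x>\delta\}$ versus the paper's direct interior-maximum contradiction, and taking the infimum from the hypotheses rather than re-deriving it by Harnack) are cosmetic and do not change the substance.
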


\begin{proof}
In the proof below, without further comment, all the constants
depend only on the data, {\it i.e.}, $c_{1}$, $\hat{r}$, $\gamma$,
$\theta_w$, and $\inf\limits_{Q^{+}_{\hat{r},R}\cap
\{x>{\hat{r}}/{2}\}}\psi$, unless otherwise is stated.

Fix $y_{0}$ with $|y_{0}|\leq \frac{15R}{16}$. We now prove that
\begin{equation}\label{5.12} \psi(x,y_{0})\geq \frac{5}{8}\mu
x\qquad\text{for}\ x\in(0,r). \end{equation}

Without loss of generality, we may assume that $R=2$ and $y_{0}=0$;
otherwise, we set $\tilde{\psi}(x,y)=\psi(x,y_{0}+\frac{R}{32}y)$
for all $(x,y)\in Q^{+}_{\hat{r},2}$. Then $\tilde{\psi}(x,y)\in
C(\overline{Q^{+}_{\hat{r},R}})\cap C^{2}(Q^{+}_{\hat{r},R})$
satisfies \eqref{5.3} with \eqref{5.8} and \eqref{5.11} in
$Q^{+}_{\hat{r},2}$, with some modified constants $N$, $\vartheta$,
and $O_{i}$, depending only on the corresponding quantities in the
original equation and on $R$. Moreover,
$$
\inf\limits_{Q^{+}_{\hat{r}2}\cap \{x>\hat{r}/2\}}\tilde{\psi}
=\inf\limits_{Q^{+}_{\hat{r},R}\cap \{x>\hat{r}/2\}}\psi.
$$
Then \eqref{5.12} for $\psi$ follows from \eqref{5.12} for
$\tilde{\psi}$ with $y_0=0$ and $R=2$. Thus we keep the original
notation with $y_{0}=0$ and $R=2$. That is, it suffices to prove
that \begin{equation}\label{5.13} \psi(x,0)\geq \frac{5}{8}\mu
x\qquad\text{for}\ x\in(0,r). \end{equation}

By the Harnack inequality, we conclude that, for any
$r\in(0,\frac{\hat{r}}{2})$, there exists $\sigma=\sigma(r)>0$,
depending only on $r$ and the data $c_{1}$, $\hat{r}$, $\gamma$,
$\theta_w$, and $\inf\limits_{Q^{+}_{\hat{r},R}\cap
\{x>\hat{r}/2\}}\psi$, such that \begin{equation}
\psi\geq\sigma\quad\text{on}\ Q^{+}_{\hat{r},3/2}\cap\{x>r\}.
\end{equation} Let $r\in(0,\frac{\hat{r}}{2})$ and \begin{equation}\label{5.15}
0<\mu_0\leq\min\{\frac{\sigma(r)}{r},c_1\}, \end{equation} where $r$
will be chosen later. Define
\begin{equation}\label{5.16}
g(y)=
\begin{cases}
\mu(y+1)^2,\qquad&-1\leq y<-\frac{1}{2},\\
\mu(2y^4-2y^2+\frac{5}{8}),\qquad&-\frac{1}{2}\leq y\leq\frac{1}{2},\\
\mu(y-1)^2, \qquad& \frac{1}{2}<y\leq 1.
\end{cases}
\end{equation}
Set $w(x,y)=\mu x g(y)$ with $g\in C^2([-1,1])$. Then, using
\eqref{5.15} and \eqref{5.16}, we obtain that, for all $x\in(0,r)$
and $|y|<1$,
\begin{equation*}
\begin{cases}
w(0,y)=0\leq \psi(0,y),\\
w(r,y)\leq\frac{5}{8}\mu r\leq \psi(r,y),\\
w(x,\pm1)=0\leq\psi(x,\pm1).
\end{cases}
\end{equation*}
Therefore, we have \begin{equation} w\leq \psi \qquad\text{on}\
\partial Q^+_{r,1}.  \nonumber \end{equation}

Next, we show that $w(x,y)$ is a strict subsolution
$\mathcal{L}_{1}w(x,y)>0$ in $Q^+_{r,1}$, if the parameters are
appropriately chosen. In fact,
\begin{equation*}
\begin{array}{lll}
&&\mathcal{L}_{1}w(x,y)\\[2mm]
&&=\big(c_1 g(y)-g^2(y)\big)\\[2mm]
&&\quad +x\Big(g''(y)-\frac{1}{(\gamma-1) c^2_{1}}x(g'(y))^2
+\frac{O_2}{x}g(y)-\frac{O_3}{x}g^2(y)+O_4 g'(y)-xO_5(g'(y))^2\Big).
\end{array}
\end{equation*}

On one hand, for $1-|y|<\varepsilon_0$ with $\varepsilon_0$ small
enough, we can see
\begin{eqnarray*}
&&g''(y)-\frac{1}{(\gamma-1)
c^2_{1}}x(g'(y))^2+\frac{O_2}{x}g(y)-\frac{O_3}{x}g^2(y)
+O_4g'(y)-xO_5(g'(y))^2\\
&&\geq g''(y)-\frac{1}{(\gamma-1)
c^2_{1}}x(g'(y))^2-Nx(g(y)+1)g(y)-xN g'(y)
+Nx^2(g'(y))^2\\
&& =:h(x,y).
\end{eqnarray*}
It is easy to see that $h(x,y)$ is continuous with respect to $x$,
$h(0,y)=0$, and that there exists $r_1>0$ such that $h(x,y)>0$ for
$r<r_1$.

On the other hand, for $1-|y|>\varepsilon_0$,
\begin{equation*}
\begin{array}{lll}
&&\mathcal{L}_{1}w(x,y)\\[2mm]
&&\geq x\Big(g''(y)-\frac{1}{(\gamma-1)
c^2_{1}}x(g'(y))^2+\frac{O_2}{x}g(y)-\frac{O_3}{x}g^2(y)
  +O_4g'(y)-xO_5(g'(y))^2\Big)\\[2mm]
&&\quad +\mu\varepsilon^2_0 (c_1-\frac{5}{8}\mu).
\end{array}
\end{equation*}
Then there exists $r_2>0$ such that the above inequality is
positive.

We claim
$$
\sup\limits_{Q^+_{r,1}}(w-\psi)\leq \sup\limits_{ \partial
Q^+_{r,1}}(w-\psi)\leq0,
$$
whenever $0<r<r_0:=\min\{r_1,r_2\}$ and $\mu\in(0,\mu_0]$.
Otherwise, there exists a point $(x_0,y_0)\in Q^+_{r,1}$ such that
\begin{eqnarray*}
0&<&(\mathcal{L}_{1}w-\mathcal{L}_{1}\psi)(x_0,y_0)\\
&=&(2c_{1}x-\psi+O_{1})(w-\psi)_{xx} +(c_{1}+O_{2})(w-\psi)_{x}-(1+O_{3})(w+\psi)_{x}(w-\psi)_{x}\\
&&+(1+O_{4})(w-\psi)_{yy}-\big(\frac{1}{(\gamma-1)
c^2_{1}}+O_{5}\big)(w+\psi)_{y}(w-\psi)_{y}\leq0,
  \nonumber
\end{eqnarray*}
where we have used the fact that $w_{xx}=0$, which is a
contradiction. Hence, we obtain our claim: \begin{equation}
\psi(x,y)\geq w(x,y)=xf(y)\qquad\text{in}\  Q^+_{r,1}.  \nonumber
\end{equation} In particular, \begin{equation} \psi(x,0)\geq\frac{5}{8}\mu x
\qquad\text{for}\quad x\in[0,r].  \nonumber \end{equation} This
implies \eqref{5.12}. Then \eqref{5.12a} holds by modifying $\mu$,
which is still denoted by $\mu$. This completes the proof.
\end{proof}

\subsection{$C^{1,\alpha}$--Estimate of $\psi$}
If $\psi$ satisfies \eqref{5.3}, \eqref{5.5}--\eqref{5.6}, and
\eqref{5.11}, it is expected that $\psi$ is very close to $c_{1}x$,
which is a solution of \eqref{5.10}. More precisely, we now prove
\begin{equation} |\psi(x,y)-c_{1}x|\leq Cx^{1+\alpha}\qquad \text{for all}\
(x,y)\in
 Q^{+}_{\hat{r},\frac{7R}{8}}   \nonumber
\end{equation} for some constant $C$.

To prove this, we study the function: \begin{equation}\label{5.26}
W(x,y):=c_{1}x-\psi(x,y). \end{equation} By \eqref{5.3}, $W$
satisfies
\begin{eqnarray}
&&\mathcal{L}_{2}W=(c_{1}x+W+O_{1})W_{xx}-(c_{1}-O_{2}-2c_{1}O_{3})W_{x}+(1-O_{3})W^{2}_{x} \label{5.27}\\
&&\qquad\qquad +(1+O_{4})W_{yy}-(\frac{1}{(\gamma-1)
c^2_{1}}-O_{5})W^{2}_{y}\nonumber\\
&&\qquad\,\,\,\, = c_{1}O_{2}+c^{2}_{1}O_{3}
\qquad\qquad\qquad\quad \text{in}\ Q^{+}_{\hat{r},R}, \nonumber\\
&&W(0,y)=0\qquad\qquad\qquad\qquad\qquad\quad \text{on}\ \partial Q^{+}_{\hat{r},R}\cap\{x=0\}, \label{5.28}\\
&&-(c_{1}-\vartheta)x\leq W(x,y)\leq c_{1}x\qquad\qquad\text{in}\
Q^{+}_{\hat{r},R}. \label{5.29}
\end{eqnarray}

Then we establish the following two estimates.

\begin{proposition}\label{18}
Let $c_{1}$, $\hat{r}$, $R$, and $\vartheta$ be the same as in Lemma
{\rm \ref{5.2}}. Then, for any $\alpha\in(0,1)$, there exist
positive constants $r$ and $A$, which depend only on $N$, $c_{1}$,
$\hat{r}$, $R$, $\vartheta$, and $\alpha$, such that, if $W\in
C(\overline{Q^{+}_{\hat{r},R}})\cap C^{2}(Q^{+}_{\hat{r},R})$
satisfies \eqref{5.27}--\eqref{5.29}, then
\begin{equation}\label{5.44} W(x,y)\leq Ax^{1+\alpha}\qquad \
\text{in}\ Q^{+}_{r,\frac{3R}{4}}. \end{equation}
\end{proposition}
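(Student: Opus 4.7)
The strategy is to prove the upper bound $W \leq A x^{1+\alpha}$ by comparison against an explicit barrier, following the scheme of Bae--Chen--Feldman \cite{bcm}. The underlying heuristic is that the principal part of $\mathcal{L}_2$ at $x = 0$ reduces to the model operator $L_0 u := c_1 x u_{xx} - c_1 u_x + u_{yy}$, and the ansatz $V(x,y) = A x^{1+\alpha}$ is a strict supersolution of $L_0$:
\[
L_0(x^{1+\alpha}) = -c_1(1+\alpha)(1-\alpha) x^\alpha < 0 \qquad \text{for every } \alpha \in (0, 1).
\]
What remains is to verify that this strict supersolution property survives all perturbations in $\mathcal{L}_2$ --- the dependence of the principal coefficient on $W$, the quadratic gradient terms, the lower-order corrections $O_i$, and the source $c_1 O_2 + c_1^2 O_3$ --- in a sufficiently thin strip near the sonic boundary, and that $V \geq W$ on the boundary of the comparison region.

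Concretely, I would take $V = A x^{1+\alpha}$ and expand, using \eqref{5.8}--\eqref{5.9} and $|V|, |W| \leq c_1 x$:
\[
\mathcal{L}_2 V - (c_1 O_2 + c_1^2 O_3) = -c_1 A(1+\alpha)(1-\alpha) x^\alpha + A^2(1+\alpha)(1+2\alpha) x^{2\alpha} + O(A x^{1+\alpha}) + O(x).
\]
Provided $A r^\alpha \leq c_1(1-\alpha)/(1+2\alpha)$, the negative $x^\alpha$-term dominates the positive $x^{2\alpha}$-term throughout $Q^+_{r, R}$; taking $r$ smaller, depending on $\alpha$, $N$, and $c_1$, absorbs the $O(x)$ remainders; hence $V$ is a classical supersolution of $\mathcal{L}_2 u = c_1 O_2 + c_1^2 O_3$ on $Q^+_{r, R}$. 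Boundary values: on $\{x = 0\}$ both $V$ and $W$ vanish by \eqref{5.28}; on $\{|y| = R\}$ the even extension of $\psi$ across the wedge $y = -R$ and across $y = 0$ produces the Neumann condition $W_y = 0$, trivially matched by $V_y \equiv 0$; and on $\{x = r\}$ Lemma~\ref{5.2} gives $W(r,y) \leq (1-\mu)c_1 r$, reducing the match to $A r^\alpha \geq (1-\mu)c_1$. The quasilinear comparison principle --- applicable because $\mathcal{L}_2$ is uniformly elliptic on $\{x \geq \delta\}$ by \eqref{5.8} and \eqref{5.11}, with the degeneration at $\{x = 0\}$ absorbed by the matching Dirichlet condition (or alternatively handled by regularizing $\mathcal{L}_2 \mapsto \mathcal{L}_2 + \varepsilon \partial_{xx}$ and letting $\varepsilon \downarrow 0$) --- then yields $W \leq V$ on $Q^+_{r, R}$, and a fortiori on $Q^+_{r, 3R/4}$, establishing \eqref{5.44}.

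The main obstacle is the competition between the boundary-matching condition $A r^\alpha \geq (1-\mu)c_1$ and the supersolution condition $A r^\alpha \leq c_1(1-\alpha)/(1+2\alpha)$: these are simultaneously satisfiable only when $3\alpha \leq \mu(1+2\alpha)$, which restricts $\alpha$ to a range determined by the constant $\mu$ from Lemma~\ref{5.2}. For larger $\alpha \in (0, 1)$, one closes the gap by a finite bootstrap: having established $W \leq A' x^{1+\alpha'}$ on $Q^+_{r', R}$, this stronger bound evaluated at $x = r''$ (with $r'' < r'$) reduces the boundary-matching requirement at the next step to $A'' r''^{\alpha''} \geq A' r''^{\alpha'}$, which together with the supersolution condition $A'' r''^{\alpha''} \leq c_1(1-\alpha'')/(1+2\alpha'')$ is satisfiable for any $\alpha'' \in (\alpha', 1)$ upon taking $r''$ sufficiently small (depending on $A'$, $\alpha'$, and $\alpha''$). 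Iterating a finite number of times with a strictly increasing sequence $0 < \alpha^{(0)} < \alpha^{(1)} < \cdots < \alpha$ reaches the prescribed exponent and completes the proof.
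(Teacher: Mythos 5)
Your supersolution computation for the profile $Ax^{1+\alpha}$ (the sign $-c_1(1+\alpha)(1-\alpha)x^\alpha$ from the model operator, the quadratic correction $A^2(1+\alpha)(1+2\alpha)x^{2\alpha}$, the compatibility constraint $3\alpha\le\mu(1+2\alpha)$) and the idea of reaching large $\alpha$ by a bootstrap in the exponent are sound, and they parallel the paper's own two-step argument (small $\alpha\le\alpha_1$ first, then $\alpha\in(\alpha_1,1)$ using the first bound). The genuine gap is your treatment of the lateral boundary $\{|y|=R\}$. Proposition~\ref{18} assumes only \eqref{5.27}--\eqref{5.29}: the equation, the Dirichlet condition on $\{x=0\}$, and the two-sided linear bound; no boundary condition on $\{|y|=R\}$ is among the hypotheses, so a Neumann condition $W_y=0$ there cannot be invoked. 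Nor is it available in the application: only one lateral side of the strip $\Omega_\varepsilon$ lies on the wedge $\Gamma_0$; the other side, near $P_1$, abuts the free boundary $\Gamma_{\rm shock}$, where the condition is the oblique derivative condition, and the even extension in $y$ does not make that side a symmetry line of $Q^{+}_{r,R}$. With the $y$-independent barrier $V=Ax^{1+\alpha}$, all that is known on $|y|=R$ is the linear bound $W\le c_1(1-\mu)x$ from Lemma~\ref{5.2} and \eqref{5.29}, which exceeds $Ax^{1+\alpha}$ as $x\to0$, so the comparison region cannot be closed; the same defect recurs in your bootstrap, since $W\le A'x^{1+\alpha'}$ with $\alpha'<\alpha''$ again fails to dominate $A''x^{1+\alpha''}$ near $x=0$ on the lateral sides.

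This is precisely why the conclusion \eqref{5.44} is stated on $Q^{+}_{r,3R/4}$ rather than on the full width, and why the paper's barrier carries $y$-dependence: after reducing to a slice ($y_0=0$, $R=2$), it compares $W$ with $v=A_1x^{1+\alpha}(1-y^2)+B_1xy^2$, with $A_1r^{\alpha}=B_1=c_1(1-\mu_1)$, which equals the available linear bound on $|y|=1$ and collapses to the desired $x^{1+\alpha}$ profile on the centerline; in the second step, for $\alpha>\alpha_1$, the lateral term is replaced by $\frac{c_1(1-\mu_1)}{r_1^{\alpha_1}}x^{1+\alpha_1}y^2$, which is dominated on $|y|=1$ by the Step-1 bound. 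To repair your argument you must restore some such mechanism --- add to $Ax^{1+\alpha}$ a term vanishing at the slice center and comparable on $|y|=1$ to whatever bound \eqref{5.29} or the previous bootstrap step actually provides --- and accept the resulting loss from $R$ to $3R/4$; with that modification your remaining computations (the strict supersolution inequality, the handling of the quasilinear term $v_{xx}(v-W)$ at an interior extremum, and the finite bootstrap in $\alpha$) go through essentially as in the paper.
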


\begin{proof}
 The main idea of the proof is the same as that in \cite{bcm}, and
we only list the major procedure and the difference here.

First, we prove that there exist $\alpha_{1}\in (0,\frac{1}{2})$ and
$r_{1}>0$ such that, if $W\in C(\overline{Q^{+}_{\hat{r},R}})\cap
C^{2}(Q^{+}_{\hat{r},R})$ satisfies \eqref{5.27}--\eqref{5.29}, then
\begin{equation} W(x,y)\leq
\frac{c_{1}(1-\mu_{1})}{r^{\alpha}}x^{1+\alpha}\qquad \text{in}\
Q^{+}_{r,\frac{7R}{8}},  \nonumber \end{equation} whenever
$\alpha\in (0,\alpha_{1}]$, $r\in (0,r_{1}]$, and
$\mu_{1}<\min\{\mu,\frac{1}{2}\}$, where $\mu$ is the constant
determined by Lemma \ref{5.2}.

As in \cite{bcm}, we first note that, without loss of generality, we
may assume that $R=2$ and $y_{0}=0$. Then it suffices to prove that
\begin{equation} W(x,0)\leq
\frac{c_{1}(1-\mu_{1})}{r^{\alpha}}x^{1+\alpha}\qquad\text{for}\
x\in(0,r)  \nonumber \end{equation} for some $r\in (0,r_{0})$ and
$\alpha\in (0,\alpha_{1})$, under the assumptions that
\eqref{5.27}--\eqref{5.29} hold in $Q^{+}_{\hat{r},2}$. For any
given $r\in (0,r_{0})$, let
$$
v=A_{1}x^{1+\alpha}(1-y^{2})+B_{1}xy^{2}
$$
with $A_{1}r=c_1(1-\mu_{1})$ and $B_{1}=c_{1}(1-\mu_{1})$. Then we
obtain \begin{equation} W\leq v \qquad\text{on}\ \partial
Q^{+}_{r_{0},1}, \nonumber \end{equation} and \begin{equation}
\mathcal{L}_{2}v-\mathcal{L}_{2}W-v_{xx}(v-W)<0 \qquad \text{in}\
Q^{+}_{r,1},  \nonumber \end{equation}
 whenever $r\in (0,r_{1}]$ and
$\alpha\in (0,\alpha_{1}]$ so that
\begin{equation}\label{A-32}
(2\alpha-1)(\alpha+1)c_1A_1<-\frac{\mu_1}{2},
\end{equation}
and
\begin{equation}\label{A-33}
r_1<\min\Big\{\Big(\frac{\mu_1}{4c_1}\Big)^{\frac{1}{\alpha}},
\Big(\frac{B_1c_2-B_1^2}{C}\Big)^{\frac{1}{\alpha}}, r_0\Big\}.
\end{equation}
Then \begin{equation} W\leq v\qquad \text{in}\ Q^{+}_{r,1}.
\nonumber \end{equation}

Next, we generalize the result for any $\alpha\in (0,1)$, which
suffices to show that for the case $\alpha>\alpha_{1}$. Fix any
$\alpha\in( \alpha_{1},1)$ and set the following comparison
function:
$$
v=\frac{c_{1}(1-\mu_{1})}{r^{\alpha_{1}}_{1}r^{\alpha-\alpha_{1}}}x^{1+\alpha}(1-y^{2})
+\frac{c_{1}(1-\mu_{1})}{r^{\alpha_1}_{1}}x^{1+\alpha_{1}}y^{2}.
\nonumber
$$
Then, as before, we can prove
$$
W\leq v\qquad \text{on}\ \partial Q^{+}_{r,1}\qquad \text{for}\
r\in(0,r_{1}].
$$
and
$$
\mathcal{L}_{2}v-\mathcal{L}_{2}W-v_{xx}(v-W)<0.
$$
Then it is easy to prove that this proposition holds with
$$
A=\frac{c_{1}(1-\mu_{1})}{r^{\alpha_1}_{1}r^{\alpha-\alpha_{1}}}.
$$
\end{proof}

\begin{proposition}\label{19}
Let $c_{1}$, $\hat{r}$, $R$, $\vartheta$, and $O_{i}$ be the same as
in Lemma {\rm 5.2}.  Then, for any $\alpha\in(0,1)$, there exist
positive constants $r$ and $B$, depending on $N$, $c_{1}$,
$\hat{r}$, $R$, $\vartheta$, and $\alpha$, so that,  if $W\in
C(\overline{Q^{+}_{\hat{r},R}})\cap C^{2}(Q^{+}_{\hat{r},R})$
satisfies \eqref{5.27}--\eqref{5.29}, we have
\begin{equation}\label{5.50} W(x,y)\geq- Bx^{1+\alpha}\qquad \text{in}\
Q^{+}_{r,\frac{3R}{4}}. \end{equation}
\end{proposition}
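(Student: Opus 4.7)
The plan is to construct a sub-barrier for $W=c_1x-\psi$ by mirroring the upper-barrier construction of Proposition \ref{18}. As in that proof, a rescaling $\tilde W(x,y):=W(x,y_0+\tfrac{R}{32}y)$ reduces the statement to showing $W(x,0)\ge -Bx^{1+\alpha}$ for $x\in(0,r)$ under the normalization $R=2$, $y_0=0$. For $\alpha$ in a first range $(0,\alpha_2]$ with $\alpha_2\in(0,\tfrac12)$ to be chosen, I would take the sub-barrier
\[
v(x,y):=-B_1 x^{1+\alpha}(1-y^2)-B_2 x y^2,
\]
with $B_2\ge c_1-\vartheta$ and $B_1 r^{\alpha}\ge c_1-\vartheta$. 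The boundary inequality $v\le W$ on $\partial Q^+_{r,1}$ is immediate from \eqref{5.29}: $v=0=W$ on $\{x=0\}$; $v=-B_2 x\le -(c_1-\vartheta)x\le W$ on $\{y=\pm 1\}$; and $v(r,y)$ is a convex combination of $-B_1 r^{1+\alpha}$ and $-B_2 r$, both $\le -(c_1-\vartheta)r\le W(r,y)$ on $\{x=r\}$.

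The interior comparison is obtained by substituting $v$ into \eqref{5.27} and, exploiting \eqref{5.8}--\eqref{5.9} together with \eqref{5.29}, deriving a strict differential inequality of the form
\[
\mathcal{L}_2 v-\mathcal{L}_2 W-v_{xx}(v-W)>0\qquad\text{in }Q^+_{r,1},
\]
which is the mirror of the inequality established in the proof of Proposition \ref{18}. The sign conditions needed are the reverses of \eqref{A-32}--\eqref{A-33}: one needs $\alpha_2<\tfrac12$ so that the leading $x^{\alpha}$ contribution from $c_1 v_x$ has the correct sign, and then $r$ small enough to absorb the $O(x^{2\alpha})$ quadratic gradient contributions coming from $(1-O_3)(v+W)_x(v-W)_x$ and $(\tfrac{1}{(\gamma-1)c_1^2}-O_5)(v+W)_y(v-W)_y$. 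A standard contradiction argument at a putative interior maximum of $v-W$ then yields $v\le W$ in $Q^+_{r,1}$, which is \eqref{5.50} with $B=B_1$. For arbitrary $\alpha\in(\alpha_2,1)$, I would bootstrap exactly as in Proposition \ref{18} by replacing the sub-barrier with
\[
v(x,y):=-\frac{c_1-\vartheta}{r_1^{\alpha_2}\,r^{\alpha-\alpha_2}}\,x^{1+\alpha}(1-y^2)
-\frac{c_1-\vartheta}{r_1^{\alpha_2}}\,x^{1+\alpha_2}y^2,
\]
and re-running the same verification.

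The main obstacle, as in Proposition \ref{18}, is that the coefficient $c_1 x+W+O_1$ of $W_{xx}$ in $\mathcal{L}_2$ depends on $W$ itself and degenerates linearly as $x\to 0$; the lower bound $W\ge -(c_1-\vartheta)x$ from \eqref{5.29} is only just sufficient to keep this coefficient strictly positive, and the quantitative margin must be propagated through the comparison. A related delicate point is the sign handling of the quadratic gradient terms $(v+W)_x(v-W)_x$ and $(v+W)_y(v-W)_y$ in the expansion of $\mathcal{L}_2 v-\mathcal{L}_2 W$: they are of mixed sign and must be absorbed into the negative linear leading term produced by the choice $\alpha<\tfrac12$, which is precisely why the simple sign-flip $v\mapsto -v$ applied to the sub-barrier of Proposition \ref{18} cannot be used and the algebra must be redone term by term. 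Once these two obstacles are negotiated, the bootstrap to general $\alpha\in(0,1)$ is routine.
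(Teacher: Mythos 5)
Your proposal is correct and follows essentially the same route as the paper: the same reduction to $R=2$, $y_{0}=0$, the same lower barrier $-L x^{1+\alpha}(1-y^{2})-Kxy^{2}$ with $Lr^{\alpha}=K=c_{1}-\vartheta$ verified on $\partial Q^{+}_{r,1}$ via \eqref{5.29}, the same interior comparison $\mathcal{L}_{2}v>\mathcal{L}_{2}W$ (with the $v_{xx}(W-v)$ term handled at a putative interior extremum) under conditions mirroring \eqref{A-32}--\eqref{A-33} with $\alpha\leq\alpha_{2}<\tfrac12$ and $r$ small, and the same bootstrap barrier for $\alpha\in(\alpha_{2},1)$. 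No substantive differences from the paper's argument.
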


\begin{proof}Similar to the proof of Proposition \ref{18},
it suffices to prove that, with the assumption $R=2$,
$$
W(x,0)\geq- \frac{c_{1}-\vartheta}{r^{\alpha}}x^{1+\alpha}\qquad
\text{for}\ x\in(0,r)
$$
for some $r>0$ and $\alpha\in (0,\alpha_{2})$. For this, we use the
comparison function:
$$
v(x,y):=-Lx^{1+\alpha}(1-y^{2})-Kxy^{2},\qquad \text{with}\
Lr^{\alpha}=K=\frac{c_{1}-\vartheta}{r^{\alpha}}.
$$
It is easy to check that
$$
W\geq v \qquad \mbox{on $\partial Q^{+}_{r,1}$ $\,\,\,$ for
$r\in(0,r_{1}]$}.
$$
Then we follow the same procedure as in \cite{bcm}, except that
$\mathcal{L}_{2}v>\mathcal{L}_{2}W$, to find that the conditions for
the choice of $\alpha,\ r>0$ are inequalities \eqref{A-32} and
\eqref{A-33} with $(\mu_{1}, r_{1})$ replaced by $(\beta, r_{2})$,
respectively, and with an appropriate constant $C$.

We claim that
$$
\min\limits_{ Q^+_{r,1}}(W-v)\geq \min\limits_{\partial
Q^+_{r,1}}(W-v)\geq 0.
$$
Otherwise, there exists a point $(x_0,y_0)\in Q^+_{r,1}$ such that
$(W-v)(x_0,y_0)<0$ and
\begin{equation}
\begin{array}{lll}0&>&(\mathcal{L}_{2}W-\mathcal{L}_{2}v)(x_0,y_0)\\[2mm] &=&(c_{1}x+W+O_{1})(W-v)_{xx}-(c_{1}-O_{2}-2c_{1}O_{3})(W-v)_{x}\\[2mm]
&& +(1-O_{3})(W+v)_{x}(v-W)_{x}+(1+O_{4})(W-v)_{yy}\\[2mm]
&& -(\frac{1}{(\gamma-1) c^2_{1}}-O_{5})(W+v)_{x}(W-v)_{x}+v_{xx}(W-v)\\[2mm]
&\geq&0\qquad\text{in}\ Q^+_{r,1},
 \end{array}
 \end{equation}
which is a contradiction. This completes the proof for the case
$\alpha\leq\alpha_2$.

For the case $\alpha\in(\alpha_{2},1)$, we set the comparison
function:
$$
u_{-}(x,y):=-\frac{c_{1}-\vartheta}{r^{\alpha_{2}}_{2}r^{\alpha-\alpha_{2}}}x^{1+\alpha}(1-y^{2})
-\frac{c_{1}-\vartheta}{r^{\alpha_{2}}_{2}}x^{1+\alpha_{2}}.
$$
Then, using the argument as before, we can choose $r>0$
appropriately small such that
$$
\mathcal{L}_{2}u_{-}-\mathcal{L}_{2}W>0
$$
holds for all $(x,y)\in Q^{+}_{r,1}$.
\end{proof}

\begin{lemma}\label{lem:5.5}
Let $\psi\in C(\overline{Q^{+}_{\hat{r},R}})\cap
C^{2}(Q^{+}_{\hat{r},R})$ be a solution of the Dirichlet problem
\eqref{5.3} and \eqref{5.5}--\eqref{5.6}. Then $\psi\in
C^{1,\alpha}(\overline{Q^{+}_{{\hat{r}}/{2},{R}/{2}}})$ for any $
\alpha\in(0,1)$ with
$$
\psi_{x}(0,y)=c_{1},\quad \psi_{y}(0,y)=0\qquad \text{for any}\
|y|\leq\frac{R}{2}.
$$
\end{lemma}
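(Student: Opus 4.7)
The plan is to combine the two-sided quantitative bound on $W = c_1 x - \psi$ furnished by Propositions \ref{18} and \ref{19} with an anisotropic rescaling argument adapted to the Grushin-type degeneracy of equation \eqref{5.3}.

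First, Propositions \ref{18} and \ref{19} together give, for every $\alpha \in (0,1)$, positive constants $r = r(\alpha)$ and $C = C(\alpha)$ such that
\begin{equation*}
|\psi(x,y) - c_1 x| \;=\; |W(x,y)| \;\leq\; C x^{1+\alpha} \qquad \text{in } Q^{+}_{r,\, 3R/4}.
\end{equation*}
This estimate already yields the pointwise boundary values of the gradient: for $|y| \leq R/2$,
\begin{equation*}
\psi_x(0,y) \;=\; \lim_{x\to 0^{+}}\frac{\psi(x,y)}{x} \;=\; c_1 - \lim_{x\to 0^{+}}\frac{W(x,y)}{x} \;=\; c_1,
\end{equation*}
while $\psi_y(0,y) = 0$ is immediate from the Dirichlet condition $\psi(0,y) \equiv 0$ in \eqref{5.6}.

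To upgrade these pointwise values into a genuine $C^{1,\alpha}$ estimate up to the sonic boundary, I would, at each $(x_0, y_0) \in Q^{+}_{r/2,\, R/2}$ with $x_0 > 0$, introduce the anisotropic rescaling
\begin{equation*}
\tilde{W}(s, t) \;:=\; \frac{W\big(x_0 + \tfrac{x_0}{2} s,\; y_0 + \tfrac{\sqrt{x_0}}{2} t\big)}{x_0^{1+\alpha}}, \qquad (s,t) \in (-1,1)^{2},
\end{equation*}
dictated by the natural Grushin scaling $x \sim y^{2}$. A direct computation transforms \eqref{5.27} into a quasilinear equation for $\tilde{W}$ on $(-1,1)^{2}$ in which the principal coefficient of $\tilde{W}_{ss}$ becomes $c_1(1 + s/2) + O(x_0^{\alpha})$ (using the previous step to absorb $W$ into the term $c_1 x + W + O_1$) and the principal coefficient of $\tilde{W}_{tt}$ becomes $1 + O(x_0)$, with all lower-order and nonlinear coefficients bounded in $C^{\alpha}$ by constants independent of $x_0$; the previous step also provides $\|\tilde{W}\|_{L^{\infty}((-1,1)^{2})} \leq C$. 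Standard interior $C^{2,\alpha}$ Schauder estimates for quasilinear uniformly elliptic equations then yield $\|\tilde{W}\|_{C^{2,\alpha}([-1/2,\,1/2]^{2})} \leq C$ uniformly in $x_0$. Scaling back produces the pointwise bounds
\begin{equation*}
|\psi_x(x_0,y_0) - c_1| \leq C x_0^{\alpha}, \qquad |\psi_y(x_0,y_0)| \leq C x_0^{1/2+\alpha},
\end{equation*}
together with matching local Hölder seminorms on $\nabla\psi$; a standard Campanato-type interpolation then assembles these into the claimed $C^{1,\alpha}(\overline{Q^{+}_{r/2,\,R/2}})$ regularity.

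The main technical obstacle is the verification that the anisotropic rescaling genuinely produces a uniformly elliptic equation with $C^{\alpha}$ coefficient norms controlled independently of $x_0$. This requires using the two-sided bound $|W| \leq C x^{1+\alpha}$ to keep the principal coefficient $2c_1 x - \psi + O_1 = c_1 x - W + O_1$ of $\psi_{xx}$ bounded below by a positive multiple of $x_0$ throughout the rescaled rectangle, together with the structural estimates \eqref{5.8}--\eqref{5.9} on $O_1, \ldots, O_5$ to ensure that the perturbation terms and the quadratic gradient terms in \eqref{5.3} contribute only lower-order perturbations after the scaling. Once uniform ellipticity and controlled coefficient norms are secured, the remaining Schauder-type arguments are routine.
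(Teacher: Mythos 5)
Your proposal follows essentially the same route as the paper's proof: the two-sided bound $|W|\le Cx^{1+\alpha}$ from Propositions \ref{18}--\ref{19}, an anisotropic rescaling on boxes of size $x_0\times\sqrt{x_0}$ under which \eqref{5.27} becomes uniformly elliptic with controlled coefficients, interior estimates yielding $|D_xW(x_0,y_0)|\le Cx_0^{\alpha}$ and $|D_yW(x_0,y_0)|\le Cx_0^{\alpha+\frac12}$, and a final assembly of these scaled estimates into $C^{1,\alpha}$ regularity up to $\{x=0\}$ with $\psi_x(0,y)=c_1$, $\psi_y(0,y)=0$. The only minor deviation is that the paper applies an interior $C^{1,\alpha}$ estimate for the quasilinear rescaled equation with merely bounded coefficients (Theorem (A1) of \cite{cm1}) rather than your $C^{2,\alpha}$ Schauder estimate with $C^{\alpha}$ coefficient norms, whose uniformity in $x_0$ would itself require that $C^{1,\alpha}$ step first; this is a repairable imprecision that does not change the argument.
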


\begin{proof}
The proof is quite similar to that in \cite{bcm}, and the main
difference is the scaling due to the different equations. For fixed
$z_{0}=(x_{0},y_{0})\in Q^{+}_{{r}/{2},{R}/{2}}$, rescale $W$ in
$R_{z_{0}}$ by defining \begin{equation}
W^{(z_{0})}(S,T)=\frac{1}{x^{1+\alpha}_{0}}W(x_{0}+\frac{x_{0}}{8}S,y_{0}
+\frac{\sqrt{x_{0}}}{8}T)\qquad \text{for}\ (S,T)\in Q_{1},\nonumber
\end{equation} where $Q_{h}=(-h,h)^{2}$ for $h>0$. Keep this in mind, we can
prove this lemma easily by following \cite{bcm} step by step. Thus
we omit the detail of proof here.
\end{proof}

Now, following the procedure in \cite{bcm} step by step with the aid
of the results above, we can obtain the next theorem.

\begin{theorem}
Let $\rho\in C^{2+\alpha}(\Omega)\cap C(\overline{\Omega})$ be the
solution of the free boundary problem \eqref{2.40a}--\eqref{2.47a}
in \S 4. Then $\rho$ cannot be $C^{1}$ across the degenerate sonic
boundary $\Gamma_{\text{sonic}}$.
\end{theorem}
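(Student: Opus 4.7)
The plan is to obtain a contradiction by comparing the radial derivative of $\rho$ on the two sides of $\Gamma_{\text{sonic}}$. On the supersonic side, namely the side where state (1) lives, the solution is simply $\rho\equiv\rho_1$, so every partial derivative of $\rho$ vanishes identically there. Therefore, if $\rho$ were $C^1$ across $\Gamma_{\text{sonic}}$, the radial derivative $\rho_r$ taken from inside $\Omega$ would have to vanish along the whole of $\overline{\Gamma_{\text{sonic}}}\setminus\{P_1\}$.

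I would then invoke the estimates already built up in this section to show that this cannot happen. In the $(x,y)$-coordinates $(x,y)=(c_1-r,\theta-\theta_1)$, the quantity $\psi=c^2(\rho_1)-c^2(\rho)=(\gamma-1)\rho_1^{\gamma-2}(\rho_1-\rho)+O((\rho_1-\rho)^2)$ carries the same regularity as $\rho$, and direct differentiation gives
\[
\psi_x \;=\; -\partial_r\psi \;=\; (\gamma-1)\rho^{\gamma-2}\,\rho_r.
\]
By Lemma \ref{lem:5.5} we know $\psi\in C^{1,\alpha}(\overline{Q^+_{\hat r/2,R/2}})$ with the boundary trace $\psi_x(0,y)=c_1$ for all $|y|\le R/2$. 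Equivalently, the lower barrier constructed in Lemma \ref{5.2} gives $\psi(x,y)\ge \mu c_1 x$ in a one-sided neighbourhood of the sonic arc, which rules out $\psi_x(0,y)=0$. Evaluating the identity above at $x=0$, where $\rho=\rho_1$, yields
\[
\rho_r\bigl|_{r=c_1^-} \;=\; \frac{c_1}{(\gamma-1)\rho_1^{\gamma-2}} \;>\; 0,
\]
which is strictly positive and independent of the point chosen on $\overline{\Gamma_{\text{sonic}}}\setminus\{P_1\}$.

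Combining these two computations then finishes the argument: the inner radial derivative has the nonzero value above, whereas the outer radial derivative (from state (1)) is identically zero, so $\rho_r$ must jump across $\Gamma_{\text{sonic}}$ and $\rho$ cannot lie in $C^1$ in any neighbourhood of a point on $\overline{\Gamma_{\text{sonic}}}\setminus\{P_1\}$. Combined with the $C^{0,1}$ regularity proved in Lemma \ref{17} and Lemma \ref{lem:5.5}, this shows that Lipschitz continuity is the exact (optimal) regularity across the degenerate sonic arc.

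I do not expect any genuine obstacle here: the substantive work has already been carried out in Lemmas \ref{5.2}, \ref{lem:5.5} and Propositions \ref{18}--\ref{19}, and the remaining step is a one-line consistency check between the two sides of $\Gamma_{\text{sonic}}$. The only mild care needed is to phrase the argument so that it also applies at the corner $P_1$ (where one compares the subsonic trace with the limit from inside $\Gamma_{\text{sonic}}$ along any sequence avoiding $\Gamma_{\text{shock}}$); the same computation $\psi_x(0,y)=c_1$ from Lemma \ref{lem:5.5} does this automatically, giving the optimality claim at $P_1$ as well.
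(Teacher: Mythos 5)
Your argument is correct, and it stays inside the machinery of \S 5, but it takes a slightly different route than the paper. The paper proves the statement by contradiction using only the lower-barrier Lemma \ref{5.2}: assuming $\rho$ (hence $\psi$) is $C^{1}$ across $\Gamma_{\text{sonic}}$, the vanishing of $\psi$ and $D\psi$ on the arc yields $|\psi|\le\delta x$ near the arc, which is exactly what is needed to place $\psi$ in the structural setting \eqref{5.11} where Lemma \ref{5.2} applies; the resulting bound $\psi\ge\mu c_{1}x$ then contradicts $D\psi=0$ on $\{x=0\}$. You instead argue directly: the trace $\psi_{x}(0,y)=c_{1}$ from Lemma \ref{lem:5.5} (equivalently, as you note, the lower bound of Lemma \ref{5.2}) gives the one-sided radial derivative $\rho_{r}=c_{1}/\big((\gamma-1)\rho_{1}^{\gamma-2}\big)>0$ from inside $\Omega$, while $\rho\equiv\rho_{1}$ outside forces the outer derivative to vanish, so $\rho$ cannot be $C^{1}$. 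This is a valid proof and even gives the precise jump of $\rho_r$, but it leans on the heavier results (Lemma \ref{lem:5.5}, hence Propositions \ref{18}--\ref{19}) and on the fact that \eqref{5.11} holds for the actual solution via the \S 4 estimates, whereas the paper's contradiction structure extracts the needed smallness of $\psi/x$ from the assumed $C^{1}$ regularity itself and needs only Lemma \ref{5.2}. One caution on your closing remark: at the corner $P_1$ Lemma \ref{lem:5.5} only provides $\psi_{x}(0,y)=c_{1}$ for $|y|\le R/2$ after the reflection/localization, and the behaviour at $P_1$ (Theorem 5.7(iii), nonexistence of the limit of $D_{r}\psi$) is a separate, subtler statement proved along the lines of \cite{bcm}; it does not follow ``automatically'' from the same computation, though this does not affect the statement under review.
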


\smallskip
We now study more detailed regularity of $\rho$ near the sonic
circle. From now on, we use a localized version of
$\Omega_{\varepsilon}$: For a given neighborhood
$\mathcal{N}(\Gamma_{\text{sonic}})$ of $\Gamma_{\text{sonic}}$ and
$\varepsilon>0$, define
$$
\Omega_{\varepsilon}:=\Omega\cap\mathcal{N}(\Gamma_{\text{sonic}})\cap\{x<\varepsilon\}.
$$
Since $\mathcal{N}(\Gamma_{\text{sonic}})$ is fixed in the following
theorem, we do not specify the dependence of $\Omega_{\varepsilon}$
on $\mathcal{N}(\Gamma_{\text{sonic}})$.

Finally, we show the regularity part of Theorem \ref{1}.
\begin{theorem}  Let $\rho$ be
the solution of the free boundary problem \eqref{3.58}--\eqref{3.62}
established in \S {\rm 4} and satisfy the properties: There exists a
neighborhood $\mathcal{N}(\Gamma_{\text{sonic}})$ of
$\Gamma_{\text{sonic}}$ such that, for $\psi:=c^{2}_{1}-c^2(\rho)$,

\medskip
\noindent {\rm (a)} $\psi$ is $C^{0,1}$ across part
$\Gamma_{\text{sonic}}$ of the degenerate sonic boundary;

\medskip
\noindent {\rm (b)} there exists $\vartheta_{0}>0$  so that, in the
coordinates \eqref{5.1},
 \begin{equation}\label{5.64}
 |\psi|\leq(2c_{1}-\vartheta_{0})x\qquad \text{in}\
\Omega\cap\mathcal{N}(\Gamma_{\text{sonic}}). \end{equation} Then we
have
\begin{itemize}
\item[\rm (i)] There exists $\varepsilon_{0}>0$ such that $\psi$ is
$C^{1,\alpha}$ in $\Omega$ up to $\Gamma_{\text{\rm sonic}}$ away
from point $P_1$ for any $\alpha\in(0,1)$. That is, for any
$\alpha\in(0,1)$ and
$(\xi_{0},\eta_{0})\in\overline{\Gamma_{\text{\rm sonic}}}\backslash
P_1$, there exists $K<\infty$ depending only on $\rho_{0}$,
$\rho_{1}$, $\gamma$, $\varepsilon_{0}$, $\alpha$,
$\|\psi\|_{C^{0,1}}$, and $d=\text{\rm
dist}((\xi_{0},\eta_{0}),\Gamma_{\text{\rm sonic}})$ so that
$$
\|\psi\|_{1,\alpha;\overline{B_{d/2}}(\xi_{0},\eta_{0})\cap\Omega_{{\varepsilon_{0}}/{2}}}\leq
K;
$$

\item[\rm (ii)] For any
$(\xi_{0},\eta_{0})\in\Gamma_{\text{\rm sonic}}\backslash P_1$, $
\lim\limits_{\begin{subarray}{l}(\xi,\eta) \rightarrow
(\xi_{0},\eta_{0})\\(\xi,\eta)\in\Omega
\end{subarray}}
D_{r}\psi=c_{1}; $

\item[\rm (iii)] The limit
$\lim\limits_{\begin{subarray}{l}(\xi,\eta)\rightarrow
P_1\\(\xi,\eta)\in\Omega\end{subarray}}D_{r}\psi$ does not exist.
\end{itemize}
\end{theorem}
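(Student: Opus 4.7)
\medskip

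The plan is to reduce the three items to (a) the $C^{1,\alpha}$ interior/boundary analysis carried out in Lemma \ref{lem:5.5} and the identity $\psi_x(0,y)=c_1$, $\psi_y(0,y)=0$ proved there, and (b) the Rankine--Hugoniot oblique relation \eqref{2.41a} on $\Gamma_{\text{shock}}$. Fix $(\xi_0,\eta_0)\in\overline{\Gamma_{\text{sonic}}}\setminus\{P_1\}$, set $d=\mathrm{dist}((\xi_0,\eta_0),P_1)>0$, and translate the coordinates in \eqref{5.1} so that $(\xi_0,\eta_0)$ becomes the origin. For all sufficiently small $r,R>0$ with $R<d/4$, the rectangle $Q^+_{r,R}$ lies inside $\Omega\cap\mathcal{N}(\Gamma_{\text{sonic}})$ and its bottom face $\{x=0\}\cap\partial Q^+_{r,R}$ sits in $\Gamma_{\text{sonic}}\setminus\{P_1\}$. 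The hypotheses of Lemma \ref{lem:5.5} are all satisfied: positivity $\psi>0$ in $\Omega$ (from $\rho<\rho_1$) with $\psi=0$ on $\Gamma_{\text{sonic}}$ gives \eqref{5.5}--\eqref{5.6}, and assumption (b) gives \eqref{5.11} with $\vartheta=\vartheta_0/2$. Applying Lemma \ref{lem:5.5} yields $\psi\in C^{1,\alpha}(\overline{Q^+_{r/2,R/2}})$ for every $\alpha\in(0,1)$, with a bound depending only on $\rho_0,\rho_1,\gamma,\varepsilon_0,\alpha,\|\psi\|_{C^{0,1}}$ and $d$ through the permissible choice of $r,R$. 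This proves item (i), and the boundary identity $\psi_x(0,y)=c_1$ immediately gives item (ii) via the chain rule $D_r=-D_x$, with the sign absorbed into the orientation convention used in the statement.

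For item (iii) I argue by contradiction. Suppose that $\lim_{(\xi,\eta)\to P_1,\,(\xi,\eta)\in\Omega}D_r\psi=L\in\mathbb{R}$. Choosing sequences interior to $\Omega$ whose nearest-point projections on $\Gamma_{\text{sonic}}$ stay away from $P_1$ while converging to $P_1$, and combining (i) with a compactness argument on a shrinking family of rectangles $Q^+_{r,R}$, one forces $L=\mp c_1$ (with the sign dictated by the coordinate $x=c_1-r$). On the other side, the $C^{1,\alpha}$-regularity of $\rho$, and hence of $\psi$, up to $\Gamma_{\text{shock}}$ away from $P_2$ established in Section~4 (together with the uniform transonicity $r(\theta_w)>c(\rho_0)$) allows passage to the limit at $P_1$ along $\Gamma_{\text{shock}}$ in the oblique condition $\beta_1\psi_r+\beta_2\psi_\theta=0$. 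Evaluating \eqref{2.9} at $P_1$ where $r=c=c_1$ gives
\begin{equation*}
\beta_1(P_1)=r'(\theta_1)\,c_1^2\bigl(c_1^2-\bar c^2(\rho_1,\rho_0)\bigr),\qquad
\beta_2(P_1)=3\,c_1^2\bigl(c_1^2-\bar c^2(\rho_1,\rho_0)\bigr),
\end{equation*}
so that $\beta_1(P_1)/\beta_2(P_1)=r'(\theta_1)/3$ and both are strictly positive since $r'(\theta_1)>0$ and $\bar c(\rho_1,\rho_0)<c_1$. Parametrizing $\Gamma_{\text{shock}}$ by $\theta$ and setting $\phi(\theta)=\psi(r(\theta),\theta)$, the oblique condition yields
\begin{equation*}
\phi'(\theta_1^-)=\psi_r(P_1^-)\bigl(r'(\theta_1)-\beta_1(P_1)/\beta_2(P_1)\bigr)=\tfrac{2 r'(\theta_1)}{3}\,\psi_r(P_1^-).
\end{equation*}
Computing $\phi'(\theta_1^-)$ independently from the Rankine--Hugoniot jump $\bar c^2(\rho,\rho_0)=r^2/(1+(r'/r)^2)$ combined with $c^2(\rho)=\rho^{\gamma-1}$ expresses $\phi'(\theta_1^-)$ solely in terms of the incoming data $\rho_0,\rho_1,\gamma$ and is a nonzero quantity that does not match $\tfrac{2r'(\theta_1)}{3}(\mp c_1)$. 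This discrepancy is the required contradiction and forces item (iii).

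\medskip

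The principal obstacle is the justification of the shock-side passage to the limit at $P_1$: at $P_1$ the elliptic-degenerate boundary $\Gamma_{\text{sonic}}$ meets the oblique boundary $\Gamma_{\text{shock}}$, and although the estimates of Section~4 furnish $C^{1,\alpha}$ up to $\Gamma_{\text{shock}}$ away from $P_2$, the regularity is non-uniform as one approaches $P_1$. What rescues the argument is the strict obliqueness $\mu=-2r^2(c^2-\bar c^2)r'(\theta_1)\ne 0$ and the non-degeneracy $r'(\theta_1)>0$ at $P_1$, which together guarantee that the oblique relation survives in the limit along $\Gamma_{\text{shock}}$. Once this is in place, the explicit values of $\beta_1(P_1),\beta_2(P_1)$ furnished by \eqref{2.9} and the Rankine--Hugoniot derivation of $\phi'(\theta_1^-)$ render the comparison with the sonic-side value $\mp c_1$ a direct algebraic verification.
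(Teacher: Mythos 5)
Your treatment of items (i) and (ii) is fine and is essentially the route the paper takes: the paper gives no independent argument here but invokes the Bae--Chen--Feldman scheme, i.e. exactly the reduction you perform — cover $\overline{\Gamma_{\text{sonic}}}\setminus P_1$ by translated rectangles $Q^{+}_{r,R}$ on which \eqref{5.5}--\eqref{5.6} and \eqref{5.11} hold (using (a), (b)), and apply Lemma \ref{lem:5.5} together with $\psi_x(0,y)=c_1$, $\psi_y(0,y)=0$. Your diagonal argument forcing the hypothetical limit at $P_1$ to coincide with the sonic-arc value is also acceptable.

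The gap is in your contradiction for item (iii). Your plan is to compute the tangential derivative $\phi'(\theta_1^-)$ of $\psi$ along $\Gamma_{\text{shock}}$ in two ways: once from the oblique condition \eqref{2.41a} with \eqref{2.9} evaluated at $P_1$ (giving $\phi'=\tfrac{2r'(\theta_1)}{3}\psi_r$, which is correct), and once ``independently from the Rankine--Hugoniot jump $\bar c^2=r^4/(r^2+(r')^2)$'', claiming the latter is determined \emph{solely} by $(\rho_0,\rho_1,\gamma)$. That claim is false: on the shock, $\rho$ is a function of $(r,r')$ through \eqref{2.42a}, so differentiating $\psi|_{\Gamma_{\text{shock}}}=c_1^2-c^2\big(\rho(r(\theta),r'(\theta))\big)$ in $\theta$ necessarily brings in $r''(\theta_1^-)$, i.e. the one-sided curvature of the shock at $P_1$. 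This is part of the unknown solution, not data — indeed the main theorem only gives $r\in C^{1,1}([\theta_w,\theta_1])$, so $r''(\theta_1^-)$ is a priori not even known to exist as a limit. Consequently your two expressions do not produce a mismatch; equating them merely yields an identity that (if anything) determines $r''(\theta_1^-)$ in terms of $\psi_r(P_1^-)$, so no contradiction is exhibited and (iii) remains unproved. A workable contradiction (the one underlying the cited BCF argument) compares the \emph{angular} component instead: under the hypothesis that $D_r\psi$ has a limit at $P_1$, the oblique relation forces $\psi_\theta\to\tfrac{r'(\theta_1)}{3}\,c_1\neq 0$ along $\Gamma_{\text{shock}}$, which must then be played off against the vanishing of $\psi_y$ up to $\Gamma_{\text{sonic}}$ away from $P_1$ (Lemma \ref{lem:5.5}) via a quantitative estimate near the corner — not against a data-only evaluation of the tangential derivative, which does not exist.
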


The proof is quite similar to the one in \cite{bcm}, which can be
achieved by following the proof of Theorem 4.2 in \cite{bcm} step by
step with the aid of the estimates obtained above. Hence we omit the
proof here.

\section{Proof of Theorem \ref{2.1}:  Global Solutions}

Finally, we show that the solution established above is a global
solution indeed, valid through the sonic circle $\Gamma_{\rm
sonic}$, as claimed in Theorem \ref{2.1}.

\smallskip
Since $\rho$ is only Lipschitz continuous across the sonic circle,
we treat the solution in the weak sense: For every $\zeta\in
C_c^{\infty}(\Omega_-)$, with $\Omega_-$ denoting the region of the
left state,
$$
\int_{\Omega_-}\big((c^2-r^2)\rho_r\zeta_r
+\frac{c^2}{r^2}\rho_{\theta}\zeta_{\theta}
-\frac{c^2}{r}\rho_r\zeta\big)\, \mathrm{d}r\mathrm{d}\theta=0.
$$
Notice that $\rho$ is Lipschitz continuous across the sonic circle.
Then, due to the Green theorem, the integrand is equal to $0$ if and
only if
 \begin{equation}
 [\big((c^2-r^2)\rho_r,\frac{c^2}{r^2}\rho_{\theta}\big)\cdot
 \nnu]=0 \qquad\text{on }\Gamma_{\text{sonic}},
 \end{equation}
where the bracket $[\cdot]$ denotes the difference of the quantity
between two sides of the sonic circle, and $\nnu$ is the normal
direction. It is obvious because from the facts that
$(\rho_r,\rho_{\theta})=(-c_1,0)$ up to the sonic circle from the
subsonic domain obtained in Lemma \ref{lem:5.5},
$(\rho_r,\rho_{\theta})=(0,0)$ from  the supersonic domain and the
fact that $c^2-r^2=0$ on the sonic circle. This completes the proof
of Theorem \ref{2.1}.

\section{Existence and Regularity of Global Solutions of the Nonlinear Wave System}

In our main theorem, Theorem \ref{1}, we have constructed a global
solution $\rho$ of the second-order equation \eqref{3.58} in
$\Omega$, combining this function with $\rho=\rho_1$ in state (1)
and $\rho=\rho_0$ in state (0). That is, we have obtained the global
density function $\rho$ that is piecewise constant in the supersonic
region, which is Lipschitz continuous across the degenerate sonic
boundary $\Gamma_{\text{sonic}}$ from $\Omega$ to state (1).

To recover the momentum components, $m$ and $n$, we can integrate
the second and third equation in \eqref{1.5a}. These can be also
written in the radial variable $r$, \begin{equation} \label{6.1}
\frac{\partial m}{\partial r}=\frac{1}{r} p(\rho)_{\xi}, \qquad
\frac{\partial n}{\partial r}=\frac{1}{r}p(\rho)_{\eta},
\end{equation} and integrated from the boundary of the subsonic region
toward the origin.

Note that we have proved that the limit of $D\rho$ does not exist at
$P_1$ as $(\xi,\eta)$ in $\Omega$ tends to $(\xi_1,\eta_1)$, but $|D
c(\rho)|$ has a upper bound. Thus, $p(\rho)$ is Lipschitz, which
implies that $(m, n)$ are at least Lipschitz across the sonic circle
$\Gamma_{\rm sonic}$.

Furthermore, $(m,n)$ have the same regularity as $\rho$ inside
$\Omega$ except the origin $r=0$. However, $(m, n)$ may be
multi-valued at the origin $r=0$.

In conclusion, we have

\begin{theorem}\label{6.1a}
Let the wedge angle $\theta_w$ be between $-\pi$ and $0$. Then there
exists a global solution $(\rho, m, n)(r,\theta)$ with the free
boundary $r=r(\theta), \theta\in [\theta_w, \theta_1]$, of {\rm
Problem 2} such that
$$
(\rho, m, n)\in C^{2+\alpha}(\Omega), \quad \rho\in
C^{\alpha}(\overline{\Omega}), \quad r\in
C^{2+\alpha}([\theta_w,\theta_1))\cap C^{1,1}([\theta_w,\theta_1]),
$$
and $(\rho, m, n)=(\rho_1, m_1, 0)$ in the domain $\{\xi<\xi_1,
r>r_1\}$ and $(\rho_0, 0,0)$ in the domain $\{\xi>\xi_1,
\eta>\eta_1\}\cup\{r>r(\theta), \theta\in [\theta_w, \theta_1]\}$.
Moreover, the solution $(\rho,m,n)(r,\theta)$ with the free boundary
$r=r(\theta)$ satisfies the following properties:
\begin{enumerate}
\item[\rm (i)] $\rho> \rho_0$ on the shock $\Gamma_{\rm shock}$, that is, the shock $\Gamma_{\rm shock}$
is separated from the sonic circle $C_0$ of state $(0)$;

\item[\rm (ii)] The shock $\Gamma_{\rm shock}$ is convex in the self-similar coordinates $(\xi, \eta)$ and
strictly convex up to point $P_1$, except point $P_2$;

\item[\rm (iii)] The solution $(\rho,m,n)$ is $C^{1,\alpha}$ up to $\Gamma_{\rm sonic}$ and
Lipschitz continuous across $\Gamma_{\text{\rm sonic}}$;

\item[\rm (iv)] The Lipschitz regularity of the solution
across $\Gamma_{\rm sonic}$ and at $P_1$ from the inside is optimal;

\item[\rm (v)] The momentum components $(m,n)$ may be multi-valued at the origin.
\end{enumerate}
\end{theorem}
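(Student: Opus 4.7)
The plan is to assemble $(\rho,m,n)$ from the density function already constructed in Theorem~\ref{1}. In the two supersonic regions we take the constant states $(\rho_0,0,0)$ and $(\rho_1,m_1,0)$ dictated by the initial data; in the subsonic region $\Omega$ the density is the one produced in \S4--\S5, so the only remaining task is to recover the momentum components $(m,n)$ in $\Omega$ and verify the stated regularity and gluing.

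To recover $(m,n)$, I would integrate \eqref{6.1} along radial rays $\theta=\text{const}$ inward from $r=r_1$, with Cauchy data $(m,n)|_{\Gamma_{\text{sonic}}}=(m_1,0)$ inherited from the left state across the sonic arc. Since Theorem~\ref{1} yields $\rho\in C^{2+\alpha}(\Omega)\cap C^{1,\alpha}(\overline{\Omega}\setminus\{P_1\})$ together with Lipschitz continuity across $\Gamma_{\text{sonic}}$, the right-hand sides of \eqref{6.1} are $C^{1,\alpha}$ in $\Omega$ and bounded up to the sonic boundary, so the quadrature produces $(m,n)\in C^{2+\alpha}(\Omega)$ with the same optimal $C^{0,1}$ regularity across $\Gamma_{\text{sonic}}$; matching with the constant left state at the sonic arc is then automatic. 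On $\Gamma_{\text{shock}}$, the Rankine-Hugoniot jump relations \eqref{2.32a} must hold as a consequence of the fact that the oblique condition \eqref{2.41a} was derived precisely by combining these relations with \eqref{mn-eq}: once $\rho$ solves \eqref{2.40a} with \eqref{2.41a} on $\Gamma_{\text{shock}}$, the momenta reconstructed from the sonic side have traces on the free boundary satisfying \eqref{2.30}--\eqref{2.32c}. The first equation of \eqref{1.5a} is then also recovered, since \eqref{1.6a} was obtained from \eqref{1.5a} by eliminating $m,n$ subject to \eqref{mn-eq}, so the three equations of \eqref{1.5a} are mutually compatible whenever $\rho$ solves the reduced second-order equation and $(m,n)$ are reconstructed consistently.

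Properties (i)--(iv) are then inherited directly from the corresponding statements in Theorem~\ref{1}. Property (v), the possible multi-valuedness of $(m,n)$ at the origin, reflects the factor $1/r$ in \eqref{6.1}: integrating from $r=r_1$ toward $r\to 0^+$ along different rays $\theta=\text{const}$ produces limits that in general depend on $\theta$, since the cumulative radial contribution of $r^{-1}(p(\rho)_\xi,p(\rho)_\eta)$ is angle-dependent even though $|Dp(\rho)|$ stays bounded in $\Omega\setminus\{P_1\}$. The main obstacle is the consistency verification: the two natural constructions of $(m,n)$---inward quadrature from $\Gamma_{\text{sonic}}$ versus reconstruction via Rankine-Hugoniot on $\Gamma_{\text{shock}}$ and quadrature from there---must agree throughout $\Omega$. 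This reconciliation is forced by the very derivation of \eqref{2.41a} from the jump identity $[p][\rho]=[m]^2+[n]^2$ together with the irrotationality propagated by the self-similar equations, but it must be inspected carefully near the degenerate point $P_2$ where the obliqueness degenerates; the one-point Dirichlet condition \eqref{2.47a} and the uniform transonicity of the shock established in \S4 ensure that the two constructions still coincide at $P_2$, completing the proof.
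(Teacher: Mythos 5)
Your overall route is the paper's own: in \S 6 the momenta are recovered by radial quadrature of \eqref{6.1} starting from the boundary of the subsonic region, the Lipschitz continuity of $p(\rho)$ (despite $D\rho$ having no limit at $P_1$) gives Lipschitz continuity of $(m,n)$ across $\Gamma_{\text{sonic}}$, the factor $1/r$ accounts for possible multi-valuedness at the origin, and (i)--(iv) are inherited from Theorem \ref{1}. However, there is one concrete error in your data prescription. You integrate every ray $\theta=\mathrm{const}$ inward from $r=r_1$ with Cauchy data $(m,n)=(m_1,0)$ on $\Gamma_{\text{sonic}}$. This only makes sense for $\theta\in[\theta_1,\pi]$: for $\theta\in[\theta_w,\theta_1)$ the ray exits $\Omega$ through $\Gamma_{\text{shock}}$ at $r(\theta)\le r_1$, and the point $(r_1,\theta)$ lies ahead of the shock in state (0), where $(m,n)=(0,0)$, not $(m_1,0)$. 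Starting such rays at $r=r_1$ with the state-(1) momentum would carry $m_1$ unchanged through the constant state (0) (where $p(\rho)_\xi=p(\rho)_\eta=0$) and across the shock, so the trace on $\Gamma_{\text{shock}}$ from inside would be wrong and the Rankine--Hugoniot relations \eqref{2.32a} would fail. The paper's prescription avoids this: each ray is started on the boundary of the subsonic region itself, i.e.\ on $\Gamma_{\text{sonic}}$ with $(m_1,0)$ when $\theta\ge\theta_1$, and on $\Gamma_{\text{shock}}$ with the values $(0,0)+([m],[n])$ from \eqref{2.32a} when $\theta\in[\theta_w,\theta_1]$.

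This also dissolves the ``two constructions must agree throughout $\Omega$'' obstacle as you framed it: the sonic-arc quadrature and the shock quadrature live on complementary angular sectors and never compete on a common region, so there is nothing to reconcile in that form. What does need checking (and is where the genuine content lies) is (a) continuity of the assembled $(m,n)$ across the ray $\theta=\theta_1$ through $P_1$, and (b) that the full system \eqref{1.5a}, not merely the radial equations \eqref{6.1}, is satisfied; the latter uses the irrotationality relations \eqref{mn-eq} in $\Omega$ (which also yield $(m,n)\in C^{2+\alpha}(\Omega)$ directly from $\rho\in C^{2+\alpha}$) together with the weak matching across $\Gamma_{\text{sonic}}$ carried out in \S 6. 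Your appeal to the derivation of the oblique condition \eqref{2.41a} is also slightly off target: \eqref{2.41a} is the \emph{tangential derivative} of the jump identity \eqref{2.6} along the shock, so by itself it only propagates Rankine--Hugoniot; that the identity holds at all is anchored by the one-point condition \eqref{2.47a} at $P_2$ and the shock evolution equation \eqref{2.42a}, which is consistent with (but more specific than) your closing remark.
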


\section*{Appendix: Proof of Lemma \ref{2}}
For self-containedness, we illustrate a stretched proof of Lemma
\ref{2} in the following:

\smallskip
\begin{proof}
For the notational simplicity, we write $\rho=\rho^{\e,\delta}$
throughout the proof.

1. The existence part of the proof is similar to that in
\cite{sbe2}. The main idea is that, for any function $w\in
\mathcal{W}$, we define a mapping
$$
T:\mathcal{W}\subset  C_{(-\gamma_1)}^2\rightarrow C_{(-\gamma_1)}^2
$$
by $Tw=\rho$, where $\rho$ is the solution to the linear regularized
fixed boundary problem \eqref{3.23}--\eqref{3.24} solved in Lemma
\ref{10}. By Lemma \ref{10}, $T$ obviously maps $\mathcal{W}$ into a
bounded set in $C_{(-\gamma_V)}^{2+\alpha}$, where $\gamma_V$ is the
value given by Lemma \ref{10}. Since $\gamma_V$ is independent of
$\gamma_1$, we may take $\gamma_1=\frac{\gamma_V}{2}$ so that
$T(\mathcal{K})$ is precompact in $C_{(-\gamma_1)}^2$.

Next, it is easy to verify that $Tw$ satisfies (W1) and (W3) in
Definition \ref{6} by the boundary conditions, the maximum
principle, and the standard interior and boundary H\"{o}lder
estimates ({\it cf.} Theorems 8.22 and 8.27 in \cite{gt}). In order
to show that $T$ maps $\mathcal{W}$ into itself, the remaining task
is to show that $Tw$ satisfies (W2) in Definition \ref{6}. To
achieve this, it suffices to find $K>0$ such that \begin{equation}
\label{3.51} \sup\limits_{\delta>0} \big(\delta^{2-\gamma_1}
\|\rho\|_{2,\overline{\Omega}\backslash\{\Gamma(\delta)\cup
\Omega_V(\delta)\}}\big)<K, \end{equation} under the assumption that
$\|w\|^{(-\gamma_1)}_2\leq K$. Note that Lemma \ref{8} gives us a
local bound for the weighted norm of $\rho$ on $\Gamma(d_0)$ of the
form \begin{equation}\label{3.52} d^{2-\gamma_1}\|\rho\|_2\leq
d^{1-\gamma_1+\mu}C, \end{equation} which holds for all $d<d_0$,
where $C$ depends on $K$, $\alpha_1$, and $\gamma_1$. To show
\eqref{3.51},  we make the $L^\infty$--estimate by considering
separately the domains in
$\overline{\Omega}\backslash\{\Gamma(\delta)\cup \Omega_V(\delta)\}$
for which $\delta>\tilde{d}$, with $\tilde{d}\leq d_0$ to be
specified later, and the domains for which $\delta\leq \tilde{d}$.

In the domains of the first kind,
$\overline{\Omega}\backslash\{\Gamma(\delta)\cup \Omega_V(\delta)\}$
with $\delta>\tilde{d}$, the solution is smooth, and trivially its
$C^2$--norm bound is independent of $K$ by the uniform H\"{o}lder
estimate, the interpolation inequality ({\it cf.} Lemma 6.32,
\cite{gt}), and the bootstrap iteratively.

Finally, we estimate
$\delta^{2-\gamma_1}\|\rho\|_{2,\overline{\Omega}\backslash\{\Gamma(\delta)\cup
\Omega_V(\delta)\}}$ with $\delta\leq\tilde{d}$. We divide the
subdomain $\overline{\Omega}\backslash\{\Gamma(\delta)\cup
\Omega_V(\delta)\}$ into two parts: The part for which
$\delta>\tilde{d}$ and its complement. The supremum over the
subdomain for which $\delta>\tilde{d}$ has been calculated above.
Next, we use the estimates for the behavior of the solution near
$\Gamma_{\text{shock}}$ to obtain the supremum over the complement.
By the interpolation inequality, let $\gamma_1=\frac{\gamma_V}{2}$,
we can obtain
$$
d^{2-\gamma_1}\|\rho\|_2\leq K_V \qquad \mbox{for all}\,\, d<d_V,
$$
where $K_V$ is independent of $K$. Therefore, we can choose
$\tilde{d}\leq \frac{\min\{d_0,d_V\}}{2}$ in \eqref{3.52} small
enough that $\tilde{d}^{1-\gamma_1+\mu}C\leq K$. Therefore,
\eqref{3.51} is satisfied, and we have chosen the parameters $K$,
$K_0$, and $\alpha_0$ defining $\mathcal{W}$ so that $T$ maps
$\mathcal{W}$ into itself.

Now, by the Schauder fixed point theorem, there exists a fixed point
$\rho$ such that $T\rho=\rho\in  C_{(-\gamma_1)}^{2}$. Then $\rho$
is a solution of the boundary value problem \eqref{3.1} and
\eqref{3.3}--\eqref{3.5} and meets the estimates listed in the
lemma.

\smallskip
2. We now show the three properties listed in this lemma for the
fixed boundary nonlinear problem \eqref{3.1} and
\eqref{3.3}--\eqref{3.5}. First we prove property (i):
\begin{equation}\label{3.5a} c^2(\rho^{\e,\delta})-r^2\geq0\qquad\text{in}\
\overline{\Omega}^{\e,\delta}, \end{equation} by the maximum principle.

On contrary, we assume that there exists a nonempty set
$D=\{(\xi,\eta)\in\overline{\Omega}\, : \, c^{2}(\rho)-r^{2}<0\}$.
Then it is easy to check that $P_2\notin D$. Since $O\notin D$,
$$
D\subset\Omega_{s}:=\{X\in \overline{\Omega}\backslash V\,:\,
r^2>\bar{c}^2(\rho,\rho_0)\},
$$
where $V$ is the set of the corner points of $\Omega$.

Firstly, inside $\Omega_{s}$, multiplying
$(\gamma-1)\rho^{\gamma-2}$ both sides of the equation
$Q^{\e,+}\rho=0$, and denoting $c^{2}(\rho)=\rho^{\gamma-1}=u$, we
have
\begin{equation}
\begin{array}{lll}
Lu&=&(\gamma-1)\rho^{\gamma-2} Q^{\e,+}\rho\\[2mm]
&=&\sum_{i=1}^2
a^{\e}_{ii}(D_{ii}u-\frac{\gamma-2}{\gamma-1}\frac{1}{\rho^{\gamma-1}}|D_{i}u|^{2})
+\zeta'(c^2-r^2)(c^2-r^2)_ru_r
+\frac{1}{r^2}u^2_{\theta}+b^{\e}u_r\\
&=&0.
\end{array}
\end{equation}
We note that $\frac{\e}{2}\leq a^{\e}_{11}\leq\e$ due to the cut-off
function $\zeta$ in $D$. We evaluate $Lr^{2}$ in $D$:
\begin{equation}
\begin{array}{lll}Lr^{2}&\geq&-2\e \big|1-\frac{2(\gamma-2)}{\gamma-1}\frac{1}{\rho^{\gamma-1}}r^{2}\big|
+\zeta'(c^2-r^2)(c^2-r^2)_ru_r+2c^2\\[2mm]
&\geq&2r^2_0-\frac{2\e}{\rho_0^{\gamma-1}}\big|\rho_0^\gamma-1-\frac{2|\gamma-2|}{\gamma-1}r^2_0\big|>0
\end{array}
\end{equation}
with small
$\e<\e_0:=\frac{\rho_0^{\gamma-1}r^2_0}{\big|\rho_0^{\gamma-1}-\frac{2|\gamma-2|}{\gamma-1}r^2_0\big|}$
when $(c^2-r^2)_r=0$. Then it means that  the minimum point of
$c^2-r^2$ can not obtained in $D$.

Secondly, along $\Gamma_{\text{shock}}\cap D$, Multiplying
$(\gamma-1)\rho^{\gamma-2}$ over the equation $M\rho=0$, we have the
boundary condition for $u$:
$$
0=(\gamma-1)\rho^{\gamma-2}M\rho=\tilde{M}u=\sum_{i=1}^2
\beta_{i}D_{i}u.
$$
At the same time, we have
\begin{equation}
\tilde{M}r^2=2r\beta_1=2rr'\big(c^2(r^{2}-\bar{c}^2)-3\bar{c}^2(c^{2}-r^{2})\big)>0
\qquad\text{on $\Gamma_{\text{shock}}\cap D$},
\end{equation}
where we have used the fact that $r^2\geq c^2\geq \bar{c}^2$ in
$\Omega_s$. Thus it means that the minimum point of $c^2-r^2$ can
not obtained along $\Gamma_{\text{shock}}\cap D$.

Thirdly, on $\Gamma_0\cap D$,
$$
(\gamma-1)\rho^{\gamma-2}\frac{\partial\rho}{\partial\nnu}-\frac{\partial
r^2}{\partial\nnu}=0,
$$
which is a contradiction due to the Hopf maximum principle.
Therefore, there is no minimum point, which implies that the set
$D=\emptyset$. This completes the proof of property ${\rm (i)}$. We
remark here that property ${\rm (i)}$ guarantees the ellipticity of
our nonlinear system, so that we can remove the cut-off function.

3. We can show property ${\rm (ii)}$, {\it i.e.},
$$
r-\bar{c}(\rho,\rho_0)\geq0 \qquad \mbox{on} \,\,\,
\Gamma_{\text{shock}}.
$$
The proof is similar to \cite{e} based on Lemma \ref{2}. The main
idea is to assume that there exists a non-empty set $ B=\{X\in
\overline{\Gamma_{\text{shock}}}:\bar{c}(\rho, \rho_0)-r>0\} $ and a
point $X\in B$ such that
$$
\max\limits_{\overline{B}}\big(\bar{c}^2(\rho,
\rho_0)-r^{2}\big)=(\bar{c}^{2}-r^{2})(X)=m>0.
$$
It is clear that $X\neq P_1,P_2$. Therefore, if $X$ exists, then
$X\in \Gamma_{\rm shock}\setminus\{P_1, P_2\}$. Then $X$ can be
either a local maximum point or a saddle point in $\Omega\cup
\Gamma_{\text{shock}}$. We show that both cases can not occur, which
implies that such $X$ does not exist. The case that $X$ is a local
maximum point is proved by the maximum principle. For the more
complicated case that $X$ is a saddle point, then multiplying
$(\bar{c}^2)'$ both sides of $Q^{\e}\rho=0$ yields
$$
L\bar{c}^2=\sum_{i=1}^2
a^{\e}_{ii}D_{ii}(\bar{c}^2)+a_1(\bar{c}^2)^2_r
+a_2(\bar{c}^2)^{2}_{\theta}+\tilde{b}^{\e}(\bar{c}^2)_r,
$$
where $a_1=-a^{\e}_{11}+\frac{a}{(\bar{c}^2)'}$ and
$a_2=-a^{\e}_{22}+\frac{a}{r^2(\bar{c}^2)'}$.

Since $X$ is a saddle point, we can construct a barrier function
$\psi$ so that $X=(r_{x},\theta_{x})$ is a maximum point along the
normal direction.

We define $d:=r_x-r+r'(\theta_x)(\theta-\theta_x)$ and a set
$$
W:=\{(r,\theta)\in\Omega:d>0\}\cap\{(r,\theta)\in\Omega:\bar{c}^2-r^{2}>m\}.
$$

Set $u=\bar{c}^2-r^{2}-m$, and let
$$
w:=\frac{1}{\mu_0}(e^{\mu_0u}-1),  \qquad \mu_0>0.
$$
Choose $\mu_0=\frac{\max\{a_i\}}{e_0}$, where $a^{\e}_{11},
a^\e_{22}\geq e_0>0$ in $W$,  and $\mu_0$ and $e_0$ are independent
of $\e$. Thus, we find $\psi(d)$ to be
$$
\psi=\frac{m_0b_1+d_1f_1}{b_1}\frac{1-e^{-b_1d/e_0}}{1-e^{-b_1d_1/e_0}}-\frac{f_1}{b_1}d,
$$
which satisfies the boundary condition:
$$
\psi(0)=0, \qquad\psi(d_1)=m_0,
$$
with $m_0=\frac{e^{\mu_0 u_{\text{max}}}-1}{\mu_0}$, where
 $u_{\text{max}}=\max\limits_{W}u=\max\limits_{W}(\bar{c}^2-r^{2}-m)$ and $d_1>0$.
Here $b_1=\max\limits_{\overline{W}}(4a_1r+b)$, $f_1=\max\limits_{W}
e^{\mu_0 u}(2a^{\e_0}_{11}+4r^2a_1+2r\tilde{b}^{\e})^+$, and
$\varepsilon\leq\varepsilon_0$.

Hence, in the set $W$, using the maximum principle and
\eqref{mu-def}, at $X$, we finally have
\begin{eqnarray*}
0&\geq&2r\beta_1-\psi'\mu d_r(X)
+\beta_2(\psi'd\theta+\beta_2\psi'r'dr)(X)\\[2mm]
&=&2r\beta_1+\psi'(0)(\beta_1-r'\beta_2)\\[2mm]
&=&2r\beta_1+\psi'(0)\mu.
\end{eqnarray*}
On the other hand, by the Taylor series expansion, we can show that,
for sufficiently small $d_1$ and $\vartheta=O(d_1)$, we have
$$
2r\beta_1+\psi'(0)\mu>0,
$$
which is a contradiction. Therefore, there is no such $X$, which
implies that the set $B=\emptyset$.

\smallskip
4. We study the monotonicity of $\rho$ along the shock boundary
$\Gamma_{\text{shock}}$, which will be used to describe the behavior
of $\rho^{\e,\delta}$ and $r^{\e,\delta}$ near the shock
$\Gamma^{\e,\delta}_{\text{shock}}$ when $\e,\delta$ tend to zero,
and the convexity of the shock in the $(\xi,\eta)$-coordinates.

The proof is technical, which can be followed as in \cite{sbe2},
with the main difference that we only need the uniform
$C^{\alpha}$--regularity. We only list the major procedure and the
difference. For simplicity, we write $\rho=\rho^{\e,\delta}$ below.
To prove the monotonicity, we argue by contradiction.

First, we examine the $C^{\alpha}$--function $\rho$ restricted to
$\Gamma_{\text{shock}}$. Without confusion, we may order the points
along
 $\Gamma_{\text{shock}}$ by $\theta$ and
 refer to the intervals along $\Gamma_{\text{shock}}$ by the label.
Then the lack of monotonicity implies that there exist points
$\Theta_1$ and $\Theta_2$ on $\Gamma_{\text{shock}}$, with
$P_2<\Theta_1<\Theta_2<P_1$, at which
$\rho(\Theta_1)>\rho(\Theta_2)$. Thus we immediately deduce that

\smallskip
\par
(a). In $(P_2, \Theta_2)$, there exists $\tilde{C}$ with
$\rho(\tilde{C})=\max\limits_{[P_2, \Theta_2]}\rho$;
\par
(b). In $(\tilde{C}, P_1)$, there exists $D$ with
$\rho(D)=\min\limits_{[\tilde{C}, P_1]}\rho$.

\noindent We want to identify points $C$ and $D$ on
$\Gamma_{\text{shock}}$ with $C<D$ such that

\smallskip
(i) $\rho(P_2)\leq \rho\leq\rho(C)$ on $[P_2, C]$;

\smallskip
(ii)  $\rho(C)\geq \rho\geq\rho(D)$ on $[C, D]$;

\smallskip
(iii) $\rho(D)\leq \rho\leq\rho(P_1)$ on $[D, P_1]$.

\smallskip
\noindent Now, property (ii) may not hold with $C=\tilde{C}$ because
$\rho(\tilde{C})$ is the maximum value of $\rho$ only at the
interval $[P_2, D]$, and we may have $D>\Theta_2$. Then, if there is
a point in $(P_2, \Theta_2)$ at which $\rho>\rho(\tilde{C})$, we let
$C$ to be the point. Otherwise, we choose $C=\tilde{C}$. Thus, all
the three  properties hold.

Now we look at the function $\rho$ in $\Omega$. The idea is to
partition $\Omega$ into three subdomains by two curves $\Gamma_C$
and $\Gamma_D$ from $C$ and $D$ to points $A$ and $B$ respectively
on $\Gamma_0$, in such a way $\rho(A)>\rho(B)$ that we can deduce
that there is a point $m$ on $\Gamma_0$ at which $\rho$ obtains a
maximum on either the subdomain $\Omega_A$ or the domain $\Omega_B$,
thus violating the Hopf maximum principle. This is also the case
even if it happens to be the origin $O$. It suffices to show that
$\rho(m)$ is the maximum value of $\rho$ on the boundary of
$\Omega_A$ or $\Omega_B$.

We now construct the Lipschitz curves on which $\rho$ has certain
monotone property. That is,
\begin{eqnarray*}
&&\rho(A)\geq \rho\geq \rho(C)-\mu\quad\, \text{on}\ \Gamma_C,
\qquad\ \rho(A)>\rho(C),
\\
&&\rho(B)\leq \rho\leq \rho(D)+\mu\quad\, \text{on}\ \Gamma_D,\qquad
\,\, \rho(B)<\rho(D),
\end{eqnarray*}
for certain number $\mu>0$. We specify
$$
\mu=\frac{1}{4}\min\{\rho(C)-\rho(D), \rho_1-\rho(C),
 \rho(D)-\bar{\rho}\}.
$$
Since $\rho\in C^{\alpha}(\overline{\Omega}), $ we have
$$
|\rho(X_1)-\rho(X_2)|\leq M|X_1-X_2|^{\alpha}
$$
for some $M>0$ and $X_1,X_2\in\overline{\Omega}$. Now, on any ball
with radius $r>0$,
$$
\text{Osc}(\rho)\leq 2Mr^{\alpha}.
$$
Let $R=(\frac{\mu}{2M})^{-\alpha}$. We have
$$
\text{Osc}_{B_R\cap \Omega}(\rho)\leq\mu.
$$
Now $\Gamma_C$ can be constructed as follows ({\it cf.} Fig.
\ref{mon}): In $B_R(C)\cap\Omega$, let $X_1$ be a point at which
$\rho$ attains its maximum value in $\overline{B_R(C)}$. Then the
first segment of $\Gamma_C$ is a straight line from $C$ to $X_1$
and, on the segment, we have
$$
\rho(X)\geq \rho(C)-\mu, \qquad \rho(X)\leq\rho(X_1).
$$

\begin{figure}[h!]
\centering
      \includegraphics[width=0.6\textwidth]{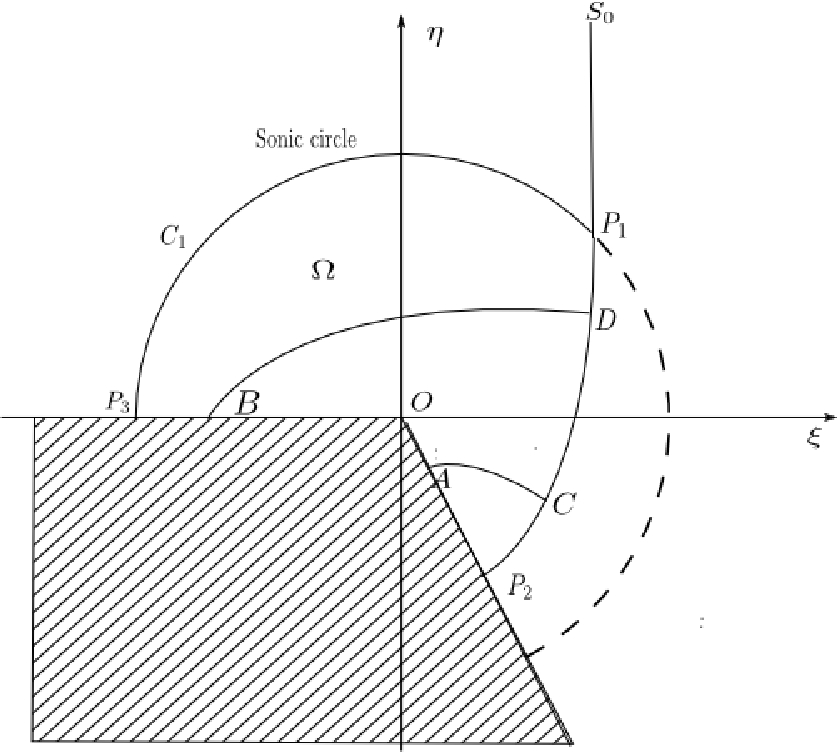}
  \caption{Hypothetical Curves}
  \label{mon}
\end{figure}

Now we continue inductively, forming a sequence of the line segments
with corners at $\{X_i\}$ (take $X_0=C$), along which $\rho(X)\geq
\rho(C)-\mu$ and $\rho(X_1)<\rho(X_2)<\cdots$. Since the domain
$\Omega$ is finite, this process must end at finite steps when we
reach a point $X_L=B\in\partial \Omega$. Similarly, we construct
$\Gamma_{D}$, with termination point $A\in \partial\Omega$.

We now locate $A$ and $B$. We note that the two curves cannot cross
each other. Furthermore, $\Gamma_{C}$ cannot terminate at
$\Gamma_{\text{sonic}}$ where $\rho>\rho_1-\mu>\rho(D)+\mu$. For the
same reason, it can not come back to $\Gamma_{\text{shock}}$ in
$[P_2,C]$ or $[C,D]$ where $\rho\leq \rho(C)$. Finally,  $A$ cannot
lie in the segment $[P_2,C]$ of $\Gamma_{shock}$. Hence, $A$ has to
end on $\Gamma_0$. Similarly, $B$ cannot lie on $\Gamma_{\rm shock}$
where $\rho\geq \rho(D)$ in the interval $[D, P_1)$ and must lie on
$\Gamma_0$ (see Fig. \ref{mon}).

Now we reach to our final contradiction. Since $\rho(A)$ is larger
than $\bar{\rho}$ and $\rho(B)$, there is a point $m$ along the
boundary $P_2OB$ at which $\rho$ attains a maximum. Assume first
that $m$ is not the origin, then $m$ can not be a local maximum for
the domain $\Omega$ by the Hopf lemma. However, along the entire
boundary of the domain $P_2CDBP_2$, $\rho\leq\rho(m)$, which implies
that it is a
maximum. This is a contradiction. Now, if $m$ coincides
with $O$, the similar minimum point $X$ resembling $B$ can not
coincide with $O$. We can find that there is no place for such $X$
either. Thus, this is also a contradiction. We conclude that $\rho$
is monotone along $\Gamma_{\text{shock}}$ from $P_2$ to $P_1$.

\end{proof}

\bigskip
\noindent
{\bf Acknowledgement}.
The authors would like to thank Mikhail Feldman for helpful
discussions and suggestions. The research of Gui-Qiang Chen was
supported in part by the National Science Foundation under Grants
DMS-0935967 and DMS-0807551, the UK EPSRC Science and Innovation
Award to the Oxford Centre for Nonlinear PDE (EP/E035027/1), the
NSFC under a joint project Grant 10728101, and the Royal
Society--Wolfson Research Merit Award (UK). Xuemei Deng's research
was supported in part by China Scholarship Council No. 2008631071
and by the EPSRC Science and Innovation Award to the Oxford Centre
for Nonlinear PDE (EP/E035027/1). The research of Wei Xiang was
supported in part by China Scholarship Council No.
 2009610055 and by the EPSRC Science and
Innovation award to the Oxford Centre for Nonlinear PDE
(EP/E035027/1).



\end{document}